\title{Improving the Convergence Rates for the Kinetic Fokker-Planck Equation by Optimal Control}
\newcommand{\matlab}{MATLAB\textsuperscript{\textregistered}}
\theoremstyle{plain}
\newtheorem{theorem}{Theorem}
\newtheorem{lemma}[theorem]{Lemma}
\newtheorem{proposition}[theorem]{Proposition}
\newtheorem{remark}[theorem]{Remark}
\newenvironment{customthm}[1]
{\innercustomthm}
{\endinnercustomthm}
\newcommand{\dd}{\, \mathrm{d}}
\newcommand{\dxdv}{\, \mathrm{d}x\,\mathrm{d}v}
\newcommand{\ddt}{\tfrac{\mathrm{d}}{\mathrm{d}t}}
\newcommand{\tcb}{\color{black}}
\newcommand{\cL}{\mathcal{L}}
\newcommand{\Cc}{C^\infty_0(\mathbb{R}^{2d})}
\begin{document}

\maketitle

\centerline{\scshape Tobias Breiten}
 \medskip
 {\footnotesize
  \centerline{Institute of Mathematics}
    \centerline{Technische Universit\"at Berlin}
    \centerline{Stra\ss e des 17. Juni 136, 10623 Berlin, Germany}
    \centerline{tobias.breiten@tu-berlin.de}
 }

 \medskip

 \centerline{\scshape Karl Kunisch}
 \medskip
 {\footnotesize
  \centerline{Institute of Mathematics}
    \centerline{University of Graz, Austria}
    \centerline{Radon Institute}
    \centerline{Austrian Academy of Sciences, Linz, Austria}
    \centerline{karl.kunisch@uni-graz.at}
 }

\bigskip

\begin{abstract}
The long time behavior and detailed convergence analysis of Langevin equations has received increased attention over the last years. Difficulties arise from a lack of coercivity, usually termed hypocoercivity, of the underlying kinetic Fokker-Planck operator which is a consequence of the partially deterministic nature of a second order stochastic differential equation. In this manuscript, the effect of controlling the confinement potential without altering the original invariant measure is investigated. This leads to an abstract bilinear control system with an unbounded but infinite-time admissible control operator which, by means of an artificial diffusion approach, is shown to possess a unique solution. The compactness of the underlying semigroup is further used to define an infinite-horizon optimal control problem on an appropriately reduced state space. Under smallness assumptions on the initial data, feasibility of and existence of a solution to the optimal control problem are discussed. Numerical results based on a local approximation based on a shifted Riccati equation illustrate the theoretical findings.
\end{abstract}
 
{\em Keywords: kinetic Fokker-Planck, hypocoercivity,  stabilization, optimal control}


\section{Introduction}\label{s1}

We consider at first the following uncontrolled Langevin dynamics satisfying the stochastic differential equation in $\mathbb{R}^{2n}$:
\begin{equation}\label{eq:langevin}
\left\{
\begin{aligned}
    \mathrm{d} x_t &=  v_t \mathrm{d}t \\
    \mathrm{d} v_t &= -\nabla G(x_t) \mathrm{d}t - \gamma v_t \mathrm{d}t + \sqrt{\tfrac{2\gamma}{\beta}}\, \mathrm{d}W_t,
  \end{aligned}
  \right.
\end{equation}
where $G(x_t) \ge 0$ is a confinement potential, and $\gamma$ and $\beta$ are positive constants. In the case of thermodynamic ensembles the constant $\beta$ relates to the inverse temperature, and $\gamma$ denotes a friction coefficient, see, e.g., \cite{LelS16,Ris96}.
The evolution of the underlying probability density $\psi$ is known to satisfy the following Kolmogorov forward equation, also called the kinetic Fokker-Planck equation,
\begin{align}\label{eq:kf-fp}
  \ddt \psi = \mathcal{L}^\sharp \psi , \ \ \psi(0) = \psi_0,
\end{align}
where $\psi_0$ denotes the initial state and $\mathcal{L}^\sharp$ the \emph{formal} adjoint in  $L^2$ of the operator
\begin{align}\label{eq:l}
  \mathcal{L}\varphi = v^\top \nabla_x \varphi- \nabla_x G^\top \nabla_v \varphi + \gamma\,( -v^\top   \nabla _v \varphi + \beta^{-1} \Delta_v \varphi).
\end{align}
It is given by
\begin{align}\label{eq:ldagger}
 \mathcal{L}^\sharp \psi = - v^\top \nabla_x \psi +  \nabla_x G^\top \nabla_v \psi + \gamma\,\mathrm{div}_v(v \psi + \beta^{-1} \nabla_v \psi ).
\end{align}

It is well-known that under suitable assumptions on $G$, there exists a canonical invariant measure defined as
\begin{align*}
 \mu (\mathrm{d}x \mathrm{d}v) = Z_\mu^{-1} \mathrm{exp}(- H(x,v) )\, \mathrm{d}x\, \mathrm{d}v,
\end{align*}
where $H(x,v)=\frac{1}{2}\|v\|_{\mathbb{R}^n}^2 + G(x)$ and $Z_\mu=\int_{\mathbb R^{2d}}  e^{-H(x,v)}\, \mathrm{d}x\, \mathrm{d}v$, and that the solutions to \eqref{eq:kf-fp} convergence to $\mu$ at an exponential rate. Since the process \eqref{eq:langevin} is non-reversible, its generator $\mathcal{L}$ is however non-normal and this exponential rate of convergence can be small, see \cite{GroS16} for a detailed discussion on the influence of $\beta$ and $\gamma$ in this regard. Additionally, the factor multiplying the exponential decay, i.e., the transient bound can be large, see, e.g., \cite{GroS16,HelN05,LelS16}.

This motivates the introduction of controls into \eqref{eq:langevin} as it has been discussed in, e.g., \cite{Breetal21,CheGP15,LelNP13}. One of the main challenges in such a control framework is to ensure that the invariant measure of the uncontrolled dynamics \eqref{eq:langevin} is unaltered. In the previous articles, this has been studied for linear feedback forces which enter as an additional summand into the second equation in \eqref{eq:langevin}. Contrary, here we are interested in the effect of modifying the underlying confinement potential $G$ by adding a separable control term $u\alpha$ leading to
\begin{equation}\label{eq:langevincontrol}
\left\{
  \begin{aligned}
    \mathrm{d} x_t &=  v_t \mathrm{d}t \\
    \mathrm{d} v_t &= -\nabla G(x_t)\mathrm{d}t - u_t \nabla\alpha(x_t)  \mathrm{d}t - \gamma v_t \mathrm{d}t + \sqrt{\tfrac{2\gamma}{\beta}}\, \mathrm{d}W_t,
  \end{aligned}
  \right.
\end{equation}
where $\alpha$ denotes a fixed (spatial) control potential and $u_t$ denotes a scalar-valued time-dependent control. The results of this work can be extended to the case of multiple control inputs in a straightforward manner. In the context of particle physics the control is motivated by speeding up the uncontrolled dynamics by means of focusing the intensity of a laser beam of shape $\alpha$ and intensity $u$, see \cite{JonMV15}. Our in interest in the control of \eqref{eq:langevin} also relates to  the fact that it serves as a tool for asymptotic analysis of generalized stochastic gradient descent methods (\cite[Theorem 14]{LiTE19}). Similar to control strategies related to specific learning rates \cite{LiTE15}, we aim at improving the performance of these methods, in this article via the influence of $u\,\alpha$.

We recall  \cite{Ris96} that for $\gamma \to \infty$ the Langevin equation can be approximated by the Smoluchowski equation given by  
\begin{equation*}
\mathrm{d} x_t = -\nabla G(x_t)\mathrm{d}t +  \sqrt{\tfrac{2}{\beta}}\, \mathrm{d}W_t,
\end{equation*}
for which the associated probability density function satisfies a Fokker-Planck equation with a uniformly elliptic generator.
The operators $\mathcal{L}$  and  $\mathcal{L}^\sharp$ on the other hand are elliptic only in the  variable $v$.

The operators $\mathcal{L}$ and $\mathcal{L}^\sharp$ belong to the class of hypoelliptic operators \cite{Hoe67}. Despite its lack of ellipticity w.r.t.~the spatial variable $x$, under suitable assumptions on $G$, the operators $\mathcal{L}$ and $\mathcal{L}^\sharp$ and the associated dynamics asymptotically converge to the unique invariant measure. This property is typically refereed to as \emph{hypocoercivity}, see the seminal article \cite{Vil09}. Asymptotic properties of $\mathcal{L}$ have already been addressed earlier for specific confinement potentials arising in statistical mechanics \cite{EckH00,EckH03}. In particular, the latter works already provide a detailed functional analytic framework of the underlying semigroup and the spectral properties of its infinitesimal generator. Contrary to the case of the Fokker-Planck equation, the spectra of the operators $\mathcal{L}$ and $\mathcal{L}^\sharp$ contain cusps such that the semigroups are no longer analytic. In \cite{HelN05,HerN04} it is however shown that there is a smoothing effect which renders these semigroups compact, a fact that will be exploited later in this manuscript. Generally, the operators $\mathcal{L}$ and $\mathcal{L}^\sharp$, their spectral properties, and associated evolution equations have been the focus of intense research, see for instance \cite{ArnE14,DolMS15,GroS16,GuiM16}, and references given there. The investigation of control techniques for equations involving hypocoercive operators appears to not have been addressed from a PDE perspective yet.

 Let us also observe that  the control enters into \eqref{eq:langevincontrol} into a bilinear structure with the state. We refer to \cite{BalS79} as one of the earliest  papers which analyzed feedback control for bilinear abstract systems and to \cite{AnnB13} for early work on optimal control of the Fokker-Planck equation.

The structure and a brief summary of the paper are as follows. In Section 2 we gather operator theoretic facts for $\mathcal{L}$ which will be relevant in later sections. Section 3 is devoted to the study of the controlled kinetic Fokker-Planck equation and the existence of solutions for the inhomogeneous equation. Since the control which enters to \eqref{eq:langevincontrol} does not influence the subspace which is generated by the invariant measure, an appropriate orthogonal decomposition of the state space is introduced and incorporated into the control problem in Section 4. The optimal stabilization problem is formulated in Section 5. Existence to this problem is verified. This requires stabilizability results which go beyond the stability properties which the uncontrolled system already enjoys. They are obtained by means of the infinite-dimensional Hautus test for stabilizability. For the linearized optimal control problem existence of a solution to the associated Riccati equation is obtained. This allows to speed up of the exponential stability for sufficiently small initial conditions and the representation of feedback controls. In Section 6 numerical results obtained by a spectral collocation technique are presented.\\

\noindent

{\bf{Notation.}} By $C^\infty_0(\mathbb{R}^{2d})$ we denote the set of all functions in $C^\infty(\mathbb{R}^{2d})$ with compact support in $\mathbb{R}^{2d}$. For Hilbert spaces $Y,Z$ the space of bounded linear operators is denoted by $\mathcal{B}(Y,Z)$, if $Y=Z$, we simply write $\mathcal{B}(Z)$. For a linear (unbounded) operator $A$ with domain $\mathcal{D}(A)$, we write $A\colon \mathcal{D}(A)\subset Z\to Z$. We distinguish between the formal adjoint $A^*$ of $A$ in $Z$ and the Hilbert space adjoint $A^\dagger$ of $A$ in $Z$. For a closed, densely defined linear operator $A$ with domain $\mathcal{D}(A)$ in $Z$ we shall also consider $A$ as a bounded linear operator $A\in \mathcal{B}(\mathcal{D}(A),Z)$ where $\mathcal{D}(A)$ is endowed with the graph norm. Its dual $A' \in \mathcal{B}(Z',[\mathcal{D}(A)]')$ is uniquely defined and it is the unique extension of the operator $A^\dagger \in \mathcal{B}(\mathcal{D}(A^\dagger),Z)$ to an element of $\mathcal{B}(Z,[\mathcal{D}(A)]')$. For the associated duality pairing $\langle \cdot ,\cdot \rangle _{\mathcal{D}(A^\dagger),[\mathcal{D}(A^\dagger)]'}$, we simply write $\langle \cdot,\cdot \rangle _{\mathcal{D}}$. In case $Z=L^2(\mathbb R^{2d})$ we denote by $A^\sharp$ the formal adjoint of $A$ in $L^2(\mathbb R^{2d})$ and by $A^\ddagger$ the Hilbert space adjoint of $A$ in $L^2(\mathbb R^{2d})$. Throughout the paper, we will extensively use the weighted (Hilbert) spaces
\begin{equation*}
\begin{aligned}
&Y\!=\!L^2_\mu(\mathbb R^{2d})\! =\! \left\{ y\colon \mathbb R^{2d}\! \to\! \mathbb R \ | \ \mu^{\frac{1}{2}} y \in L^2(\mathbb R^{2d}) \right\},  \| y\| _Y \!= \!\left( \int_{\mathbb R^{2d}} \mu y^2 \, \mathrm{d}x\, \mathrm{d}v\right)^{\frac{1}{2}}, \\[1ex]
&V\!=\!H_\mu^1(\mathbb R^{2d})\!=\!\left\{ y\colon \mathbb R^{2d}\!\to \!\mathbb R\ | \ y\in Y, \nabla y \in Y^{2d} \right\}, \| y\|_V \!= \! \left(\|y\|^2_Y + \|\nabla y \|_{Y^{2d}}^2\right)^{\frac{1}{2}},  \\[1ex]
 & V_v\!=\! H_{\mu,v}^1(\mathbb R^{2d})\!=\!\left\{ y\colon \mathbb R^{2d}\!\to\! \mathbb R\ |  \  y\in Y, \nabla_v y \in Y^d \right\},   \| y\|_{V_v} \!\!=\!\! \left(\|y\|^2_Y \!+ \!\|\nabla_v y \|_{Y^d}^2\right)^{\frac{1}{2}},
 \end{aligned}
 \end{equation*}
where $\nabla_vy=\begin{pmatrix} \tfrac{\partial y}{\partial v_1},\dots, \tfrac{\partial y}{\partial v_1}\end{pmatrix}^\top$.

\section{The Operator form of the kinetic Fokker-Planck equation}\label{s2}

In this short section we recall some developments concerning the operator  $\mathcal{L}$ as generator of a $C_0$ semigroup.  For simplicity of presentation  we set the parameters $\gamma$ and $\beta$ equal to 1 and return to this point when addressing the asymptotic behavior of solutions.
 The  formal adjoint $\cL^\sharp$ in $L^2(\mathbb{R}^{2d})$ of $\cL$ with domain $C^\infty_0(\mathbb{R}^{2d})$ has already been defined.
The formal adjoint $\mathcal{L}^*$ of $\mathcal{L}$ w.r.t.~the measure $\mu$ and  domain $C^\infty_0$ is given by  
\begin{align}\label{eq:l*}
 \mathcal{L}^* \phi = -v^\top \nabla_x \phi + \nabla_x G^\top \nabla_v \phi   -v^\top \nabla_v \phi +  \Delta_v \phi,
\end{align}
see for instance \cite[Section 2.3.2]{LelS16}.
 Here and in the following we frequently use that
\begin{equation}\label{eq:aux_1}
\begin{aligned}
 \nabla_x \mu &= -  \mu \nabla_x G,  \quad  \nabla_v \mu = -  \mu  v.
\end{aligned}
\end{equation}
Throughout the remainder we assume that the confinement potential $G$ satisfies

\begin{customthm}{A1} \label{ass:A1}
The confinement potential satisfies $G\in W^{1,\infty}_{\mathrm{loc}}(\mathbb{R}^d)$, it is \\  bounded from below, and $\int_{\mathbb{R}^d} e^{-G(x)} \,\dd x<\infty.$
\end{customthm}

We further define the isomorphism $
  \mathcal{M}\colon L_\mu^2(\mathbb R^{2d}) \to L^2(\mathbb R^{2d}) , \ \mathcal{M} f = \mu^{\frac{1}{2}}f.
$
A short computation shows that $\mathcal{L}^*=\mathcal{M}^{-2} \mathcal{L}^\sharp \mathcal{M}^2$.
It has been pointed  out, for instance in \cite[I.7.4]{Vil09} that for a probabilistic interpretation it is more appropriate to consider
\begin{align}\label{eq:kf-fpw}
  \ddt \varphi = \mathcal{L}^* \varphi , \ \ \varphi(0) = \varphi_0,
\end{align}
in $Y$, rather than \eqref{eq:kf-fp} in $L^2(\mathbb{R}^{2d})$, and we shall follow this formulation in the sequel. The state variables of these two equations are related by $\psi=\mathcal{M}^2 \varphi$.

Next we address semigroup properties of $\cL$ and $\cL^*$. It is our understanding of the literature that, for quite some time,  it has been taken for granted that the closures of these operators generate $C_0$-semigroups before rigorous proofs were given.

\begin{proposition}\label{prop:adjoint_relation}

Let Assumption \eqref{ass:A1} hold. Then the operators $\overline{\cL}$ and $\overline{\cL^*}$ are maximally dissipative in $Y$ and they generate contraction semigroups $e^{\overline{\cL} t}$ and  $e^{\overline{\cL^*} t}$ on $Y$. Moreover  ${\overline{\cL^*}} = \cL^\dagger=  \overline{\cL}^\dagger $ and thus $e^{\overline{\cL^*} t}=e^{\cL^\dagger t}  = (e^{\overline{\cL}t})^\dagger$ hold.
\end{proposition}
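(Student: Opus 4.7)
The plan is to verify the hypotheses of a Phillips-type generation theorem for both $\cL$ and $\cL^*$ on the dense domain $\Cc\subset Y$, and then to derive the adjoint identities from the maximality of the resulting closures. The result I would invoke is: if $A$ is densely defined on a Hilbert space and both $A$ and its formal adjoint $A^*$ are dissipative, then $A$ is closable, $\overline{A}$ is maximally dissipative, and $\overline{A}$ generates a contraction $C_0$-semigroup (with the analogous statement holding for $A^*$). Since $\cL^*$, the formal $Y$-adjoint of $\cL$, is also given on $\Cc$ by \eqref{eq:l*}, the entire problem reduces to one dissipativity estimate.

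The core calculation is that the first-order transport terms in $\cL$ and $\cL^*$ cancel on addition. From \eqref{eq:l} and \eqref{eq:l*} one has
\[
(\cL+\cL^*)\phi\;=\;2\bigl(-v^\top\nabla_v\phi+\Delta_v\phi\bigr)\;=\;2\,\mu^{-1}\nabla_v\!\cdot\!(\mu\,\nabla_v\phi),
\]
where the second equality uses $\nabla_v\mu=-\mu v$ from \eqref{eq:aux_1}. For $\phi\in\Cc$ integration by parts in $v$ is unambiguous---the support is compact and Assumption \eqref{ass:A1} only asks for $W^{1,\infty}_{\mathrm{loc}}$ regularity of $G$---and yields
\[
\langle \cL\phi,\phi\rangle_Y \;=\;\langle \cL^*\phi,\phi\rangle_Y \;=\;-\!\int_{\R^{2d}}\mu\,|\nabla_v\phi|^2\dxdv\;\le\;0.
\]
Hence both $\cL$ and $\cL^*$ are dissipative on $\Cc$. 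Since $\Cc$ is dense in $Y$, the Phillips criterion then delivers m-dissipative closures $\overline{\cL}$ and $\overline{\cL^*}$ generating contraction semigroups on $Y$.

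For the adjoint identification, the inclusion $\cL^*\subseteq\cL^\dagger$ is the very definition of the formal $Y$-adjoint; since $\cL^\dagger$ is closed (as the Hilbert adjoint of a densely defined operator) and coincides with $\overline{\cL}^\dagger$, taking closures gives $\overline{\cL^*}\subseteq\cL^\dagger=\overline{\cL}^\dagger$. The Hilbert adjoint of an m-dissipative operator on a Hilbert space is itself m-dissipative, and m-dissipative operators admit no proper dissipative extension; consequently the inclusion must be an equality, yielding $\overline{\cL^*}=\cL^\dagger=\overline{\cL}^\dagger$. The semigroup identity $e^{\overline{\cL^*}t}=(e^{\overline{\cL}t})^\dagger$ is then the standard dual-semigroup statement for contraction semigroups on Hilbert space.

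The point that requires care is that the classical Lumer--Phillips theorem in its most common form asks for density of $\mathrm{range}(\lambda I-\overline{\cL})$, which is awkward to establish directly because $\cL$ is merely hypoelliptic (elliptic in $v$ only) and no explicit resolvent is available. Invoking the Phillips variant via joint dissipativity of $\cL$ and $\cL^*$ sidesteps this entirely: the symmetric structure of $(\cL+\cL^*)\phi$ makes the dual estimate free of charge, so no direct resolvent construction or approximating elliptic regularization is needed at this stage.
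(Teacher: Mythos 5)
Your dissipativity computation is correct, and the adjoint-identification step at the end (once one \emph{has} maximal dissipativity of both closures) mirrors the paper's. The problem is the ``Phillips-type generation theorem'' you invoke in the first paragraph, which as stated is false. The genuine theorem (e.g.\ Engel--Nagel, Cor.~II.3.17) says: if $A$ is densely defined and dissipative and the \emph{Hilbert adjoint} $A^\dagger$ (on its full, maximal domain) is dissipative, then $\overline{A}$ is m-dissipative. You instead check dissipativity of the \emph{formal adjoint} $\cL^*$ on the test-function core $\Cc$. These are not the same thing: $\cL^*|_{\Cc}$ is only a restriction of $\cL^\dagger$, and its dissipativity on $\Cc$ gives no control over $\langle \cL^\dagger y, y\rangle$ for the typically much larger set $y\in\mathcal{D}(\cL^\dagger)\setminus\overline{\Cc}^{\,\mathrm{graph}}$. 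Establishing that $\overline{\cL^*}$ actually \emph{equals} $\cL^\dagger$ is precisely part of the claim being proved, so the argument is circular.

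A concrete counterexample to the criterion as you stated it: take $A = \mathrm{d}/\mathrm{d}x$ on $C_0^\infty(0,\infty)\subset L^2(0,\infty)$. Both $A$ and its formal adjoint $-\mathrm{d}/\mathrm{d}x$ are skew-symmetric on $C_0^\infty(0,\infty)$, hence dissipative; yet $\overline{A}$ has domain $H_0^1(0,\infty)$ and $\mathrm{ran}(I-\overline{A})$ lies in the closed hyperplane $\{g : \int_0^\infty e^{-s}g(s)\,\mathrm{d}s = 0\}$, so $\overline{A}$ is not m-dissipative. This is exactly the phenomenon that your last paragraph claims to ``sidestep'': the difficulty with $\cL$ is not proving dissipativity (that is one line, as you show) but proving \emph{essential} m-dissipativity on $\Cc$, i.e.\ that the closure of $\Cc$ in graph norm exhausts the maximal domain. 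The paper does not avoid this: it delegates exactly this range-density step to \cite[Theorem~2.1]{ConG10} (and \cite[Proposition~5.5]{HelN05}), whose proofs rely on hypoellipticity and explicit parametrix/commutator estimates, after transporting the problem to $L^2(\R^{2d})$ via the unitary $M$. Your proposal needs either such an external essential-m-dissipativity input or a direct range-density argument; joint dissipativity of $\cL$ and $\cL^*$ on $\Cc$ alone does not deliver it.
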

\begin{proof}
The fact that $\overline{\cL}$ generates a contraction semigroup under Assumption \eqref{ass:A1} is known from \cite{GroS16}. Since  adjoints are not addressed there we give some insight into the proof and in doing so verify the statements concerning $\cL^*$.
All proofs for the generation property of $\overline{\cL}$ make use of the operator $\mathcal{L}_{\mathrm{K}}$ defined on $C_0^\infty$ by
\begin{align}\label{eq:kfp}
 \mathcal{L}_{\mathrm{K}}\zeta =\Delta_v\zeta  - \tfrac{1}{4}\|v\|^2\zeta  + \tfrac{d}{2}\zeta -\nabla_x G^\top \nabla_v \zeta  + v^\top \nabla_x\zeta.
\end{align}
The operator ${\mathcal{L}_{\mathrm{K}}^\sharp}$ is obtained from ${\mathcal{L}_{\mathrm{K}}}$ by changing the signs of the last two summands of that operator.
A  computation shows that 
\begin{equation}\label{eq:aux9}
\cL =\mathcal{M}^{-1}\mathcal{L}_{\mathrm{K}}\mathcal{M}, \quad \text{and } \cL^* =\mathcal{M}^{-1}\mathcal{L}^\sharp_{\mathrm{K}}\mathcal{M}.
\end{equation}
Since $C_0^\infty$ is dense in $Y$ the operator $\mathcal{L}_{\mathrm{K}}$ is densely defined. Moreover by direct computation, it is dissipative in $Y$, and hence $\mathcal{L}_{\mathrm{K}}$ is closable, see \cite[Chapter II, 3.14 Proposition]{EngN99}.
Moreover, from \cite[Theorem 2.1]{ConG10} and its proof we know that  $\overline{\mathcal{L}_{\mathrm{K}}}$ is maximally dissipative in $L^2(\mathbb{R}^{2d})$.
By the first equality in \eqref{eq:aux9} one can argue that $\overline{\cL}$ is maximally dissipative in $Y$ and hence by the Lumer-Phillips theorem it generates a contraction semigroup $e^{\overline{\cL} t}$ on $Y$, see, e.g., \cite[Theorem 3.15]{EngN99}. As a side remark we mention that under the stronger regularity assumption $G\in C^\infty(\mathbb{R}^d)$ it was verified in \cite[Proposition 5.5]{HelN05} that $
\overline{\mathcal{L}_{\mathrm{K}}^\sharp}$ is maximally dissipative in $L^2(\mathbb{R}^{2d})$ and the proof there can readily be adopted to argue that $ \overline{\mathcal{L}_{\mathrm{K}}}$ is maximally dissipative as well.

Next we turn to adjoints. The proof of \cite[Theorem 2.1]{ConG10} can  be adapted to argue that  $\overline{\mathcal{L}_{\mathrm{K}}^\sharp}$ is maximally dissipative in $L^2(\mathbb{R}^{2d})$ under Assumption \eqref{ass:A1}.  Hence from the second equality in \eqref{eq:aux9} it follows that $\overline{\cL^*}$ is maximally dissipative in $Y$ and generates the semigroup $e^{\overline{\cL^*} t}$ on $Y$.

Let us further observe that the $Y$-adjoint $\overline{\cL}^\dagger = {\cL}^\dagger$ of $ \overline{\cL}$ is also maximally dissipative, \cite[Proposition 3.1.10]{TucW09}. The graph of $\overline{\cL^*}$ is contained in the graph of $\overline{\cL}^\dagger$. Since both operators are maximally dissipative we have $\overline{\cL^*}={\cL}^\dagger $ and $e^{\overline{\cL^*}t}=e^{{\cL}^\dagger  t} $. The final claim $(e^{\overline{\cL}t})^\dagger=e^{\cL^\dagger t}$ follows from a general result on adjoint semigroups, see \cite[Corollary 1.10.6]{Paz83}.
\end{proof}

\begin{remark}\label{rem:pH}{\em
 While not essentially needed later in this paper, let us make some observations on the relation between the operators  $\mathcal{L}^\sharp, \mathcal{L}^*$, and $\mathcal{L}^\sharp_{\mathrm{K}}$. For this purpose we introduce the following operators on $C^\infty_0(\mathbb{R}^{2d})$:
  \begin{align*}
  R_{\mu^-}&= -\mathrm{div}_v(\nabla_v \cdot + v\cdot),  \qquad R_{\mu}= -(\Delta_v\cdot - v^\top \nabla_v \cdot), \\ R_{L^2}&= -(\Delta_v \cdot - \tfrac{1}{4}\|v\|^2\cdot+ \tfrac{d}{2}\cdot), \qquad
   J= - v^\top \nabla_x  + \nabla_x G^\top \nabla_v.
   \end{align*}
   In this way we obtain
 \begin{equation*}
  \mathcal{L}=-J-R_{\mu}, \quad\mathcal{L}^\sharp=J-R_{\mu^-}, \quad \mathcal{L}^*=J-R_{\mu}, \quad \mathcal{L}^\sharp_{\mathrm{K}}=J-R_{L^2}.
   \end{equation*}
A short computation shows that $J$ is formally skew-adjoint in $L^2(\mathbb{R}^{2d})$ as well as in $Y$. Further we have that $\mathcal{M} J= J\mathcal{M}$ and  
\begin{equation*}
 \mathcal{L}^\sharp_{\mathrm{K}}=\mathcal{M}^{-1} \mathcal{L}^\sharp \mathcal{M}, \quad \mathcal{L}^*=\mathcal{M}^{-2} \mathcal{L}^\sharp \mathcal{M}^2.
  \end{equation*}
The $'R'$ summands  in   $\mathcal{L}^\sharp, \mathcal{L}^\sharp_{\mathrm{K}}$, and $\mathcal{L}^*$ are such that
$R_{\mu^-}$ is formally selfadjoint in $L^2_{\mu^{-1}}(\mathbb{R}^{2d})$,
$R_{L^2}$ is formally selfadjoint in $L^2(\mathbb{R}^{2d})$,
$R_{\mu}$ is formally selfadjoint in $L^2_{\mu}(\mathbb{R}^{2d})$,
and these operators are nonnegative.
}
\end{remark}

The following technical result will be needed in the next section.

\begin{lemma}\label{lem:domains}
  It holds that
  \begin{align*}
   \mathcal{D}(\overline{\mathcal{L}}) \subset H_{\mu,v}^1(\mathbb R^{2d}),  \quad
   \mathcal{D}(\overline{\mathcal{L}^*}) \subset H_{\mu,v}^1(\mathbb R^{2d}),
  \end{align*}
  with continuous injections and domains endowed with the graph norm. Morover,
\begin{align*}
 -\langle \overline{\mathcal{L}} \varphi,\varphi \rangle_{L^2_\mu} &= \| \nabla_v \varphi \|_{L^2_\mu}^2 \ \ \forall \varphi \in \mathcal{D}(\overline{\mathcal{L}}), \\
  -\langle \overline{\mathcal{L}^*} \phi,\phi \rangle_{L^2_\mu} &= \| \nabla_v \phi \|_{L^2_\mu}^2 \ \ \forall \phi \in \mathcal{D}(\overline{\mathcal{L}^*}).
\end{align*}
\end{lemma}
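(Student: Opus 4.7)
The strategy is a two-step argument: first establish the two identities on the core $C^\infty_0(\R^{2d})$, then extend by closure and read off the continuous injection.

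First, for $\varphi\in\Cc$ I would write $\cL\varphi$ as the sum of the transport part $J\varphi=-v^\top\nabla_x\varphi+\nabla_xG^\top\nabla_v\varphi$ (with a sign) and the $v$-dissipative part $-v^\top\nabla_v\varphi+\Delta_v\varphi$. The transport part is formally skew-adjoint on $Y$ (this is noted already in Remark \ref{rem:pH}, and follows immediately by integration by parts using $\nabla_x\mu=-\mu\nabla_xG$ and $\nabla_v\mu=-\mu v$ from \eqref{eq:aux_1}), so it contributes nothing to $\langle\cL\varphi,\varphi\rangle_Y$. For the dissipative part I would integrate by parts in $v$ in the term $\int \mu\varphi\Delta_v\varphi\dxdv$, pick up $-\int\mu|\nabla_v\varphi|^2\dxdv$ and a boundary-free correction $\int\mu\varphi\,v\cdot\nabla_v\varphi\dxdv$ coming from $\nabla_v\mu=-\mu v$; this correction exactly cancels the contribution of $-v^\top\nabla_v\varphi$. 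This yields
\begin{equation*}
 -\langle\cL\varphi,\varphi\rangle_Y=\|\nabla_v\varphi\|_{Y^d}^2\qquad\forall\varphi\in\Cc.
\end{equation*}
The identical computation for $\cL^*$ works because, comparing \eqref{eq:l} and \eqref{eq:l*}, the transport part only changes sign (still skew-adjoint, still contributes zero) while the $v$-dissipative part $-v^\top\nabla_v+\Delta_v$ is identical.

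For the extension to $\mathcal{D}(\overline{\cL})$, take $\varphi\in\mathcal{D}(\overline{\cL})$ and, by definition of closure, a sequence $\varphi_n\in\Cc$ with $\varphi_n\to\varphi$ and $\cL\varphi_n\to\overline{\cL}\varphi$ in $Y$. Applying the $C^\infty_0$-identity to the differences and using Cauchy-Schwarz,
\begin{equation*}
\|\nabla_v\varphi_n-\nabla_v\varphi_m\|_{Y^d}^2=-\langle\cL(\varphi_n-\varphi_m),\varphi_n-\varphi_m\rangle_Y\le\|\cL(\varphi_n-\varphi_m)\|_Y\|\varphi_n-\varphi_m\|_Y\to 0,
\end{equation*}
so $(\nabla_v\varphi_n)$ is Cauchy in $Y^d$ and converges to some $g\in Y^d$. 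Since $\mu$ is smooth and positive, convergence in $Y$ implies convergence in $L^2_{\mathrm{loc}}(\R^{2d})$ and hence in $\mathcal{D}'(\R^{2d})$, so passing to the limit in $\langle\nabla_v\varphi_n,\Phi\rangle=-\langle\varphi_n,\mathrm{div}_v\Phi\rangle$ identifies $g=\nabla_v\varphi$ distributionally. Therefore $\varphi\in V_v$, and passing to the limit in the quadratic identity gives $-\langle\overline{\cL}\varphi,\varphi\rangle_Y=\|\nabla_v\varphi\|_{Y^d}^2$. The argument for $\overline{\cL^*}$ is verbatim.

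The continuous injection follows cheaply from the identity plus Cauchy-Schwarz and Young: for $\varphi\in\mathcal{D}(\overline{\cL})$,
\begin{equation*}
\|\varphi\|_{V_v}^2=\|\varphi\|_Y^2+\|\nabla_v\varphi\|_{Y^d}^2=\|\varphi\|_Y^2-\langle\overline{\cL}\varphi,\varphi\rangle_Y\le \tfrac{3}{2}\|\varphi\|_Y^2+\tfrac{1}{2}\|\overline{\cL}\varphi\|_Y^2,
\end{equation*}
which is bounded by a constant times the graph norm squared; the same inequality with $\overline{\cL^*}$ handles the second inclusion.

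I do not expect a serious obstacle here: the main subtlety is just making sure the weak $v$-gradient of the limit coincides with the $Y^d$-limit of $\nabla_v\varphi_n$, which is handled by the local $L^2$-embedding afforded by positivity and smoothness of $\mu$. Everything else is an integration by parts on the dense core plus a closure argument.
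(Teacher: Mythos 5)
Your proof is correct and follows essentially the same route as the paper: establish the quadratic identity on the core $\Cc$ by integration by parts (the paper cites this from the literature, you derive it), then pass to $\mathcal{D}(\overline{\cL})$ by a closure argument and read off the continuous injection. Your version is slightly streamlined — you go directly to the Cauchy estimate on $\nabla_v\varphi_n$ and identify the limit distributionally, whereas the paper first extracts a weak limit in $H^1_{\mu,v}$ to identify $\varphi$ and then separately shows the sequence is Cauchy for strong convergence — but this is a cosmetic difference, not a different method.
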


\begin{proof}
 By computation, see, e.g., \cite[pp.~719]{LelS16}, we know
 \begin{align}\label{eq:aux1}
  -\langle \mathcal{L} \varphi,\varphi \rangle_{L^2_\mu} = \| \nabla_v \varphi \|_{L^2_\mu}^2 \ \forall \varphi \in C_0^{\infty}(\mathbb R^{2d}).
 \end{align}
Let $\varphi \in \mathcal{D}(\overline{\mathcal{L}})$. Since $C_0^\infty(\mathbb R^{2d})$ is a core for $\overline{\mathcal{L}}$, there exists $\varphi_n \in C_0^{\infty}(\mathbb R^{2d})$ such that $\varphi_n \to \varphi$ and $\overline{\mathcal{L}}\varphi_n \to \overline{\mathcal{L}} \varphi$ in $L^2_\mu.$ From \eqref{eq:aux1} we deduce that $
 \| \mathcal{L} \varphi_n \| _{L^2_\mu} \| \varphi_n \| _{L^2_\mu} \ge \| \nabla_v \varphi_n \| _{L^2_\mu}^2.$
Consequently, $\{\varphi_n\}_{n=1}^{\infty}$ is bounded in $H_{\mu,v}^1 (\mathbb R^{2d})$ and thus on a subsequence we have $\varphi_n \rightharpoonup  \tilde{\varphi}$ weakly in $H^1_{\mu,v}(\mathbb R^{2d})$ for some $\tilde{\varphi}\in H_{\mu,v}^1 (\mathbb R^{2n})$. Since $\varphi_n\to \varphi$ in $L^2_\mu(\mathbb R^{2d})$, we have $\varphi_n \rightharpoonup \varphi$ in $H_{\mu,v}^1(\mathbb R^{2d})$ and in particular $\varphi \in H_{\mu,v}^1(\mathbb R^{2d})$. Observe that \eqref{eq:aux1} implies that
\begin{align*}
 \| \mathcal{L}(\varphi_n-\varphi_m)\|_{L^2_\mu} \| \varphi_n -\varphi_m\|_{L^2_\mu} \ge \| \nabla  _v (\varphi_n -\varphi_m ) \| _{L^2_\mu}^2.
\end{align*}
Hence, $\{\varphi_n\}$ is a Cauchy sequence in $H_{\mu,v}^1(\mathbb R^{2d})$. It admits a strong limit in $H_{\mu,v}^1(\mathbb R^{2d})$ which is necessarily $\varphi$. Taking the limit in $
 -\langle \mathcal{L}\varphi_n,\varphi_n \rangle_{L^2_\mu} = \| \nabla_v \varphi_n\| _{L^2_\mu}^2
$
we arrive at the first equality. The arguments for $\overline{\mathcal{L}^*}$ are analogous.
\end{proof}

\section{The controlled kinetic Fokker-Planck equation} \label{s3}

Returning to \eqref{eq:langevincontrol}, let us briefly comment on how the change in the potential affects the deterministic dynamics associated with $\mathcal{L}$ and $\mathcal{L}^*$. From \eqref{eq:l}, we conclude that for fixed time $t$ and control $u=u(t)$, the operator  $\mathcal{L}_{\mathrm{c}}(u)$ is given by
\begin{align}\label{eq:lu}
  \mathcal{L}_{\mathrm{c}}(u)\varphi =  v^\top \nabla_x \tcb\varphi- \nabla_x G^\top \nabla_v \varphi- u\nabla_x \alpha^\top \nabla_v \varphi -v^\top   \nabla _v \varphi + \Delta_v \varphi.
\end{align}
Throughout the remainder we make the following additional assumption.

\begin{customthm}{A2} \label{ass:A2}
 The control potential satisfies $\alpha \in W^{1,\infty}(\mathbb R^d)$.
\end{customthm}

For $\varphi,\phi \in C_0^\infty$, it holds that
\begin{equation}\label{eq:kk1}
\begin{aligned}
 \langle \phi, \mathcal{L}_c(u)\varphi \rangle_{L^2_\mu} &= \langle \phi, \mathcal{L}\varphi\rangle_{L^2_\mu} - u\int \mu \phi \nabla_x \alpha^\top \nabla_v \varphi \dxdv \\
 &= \langle \mathcal{L}^* \phi, \varphi \rangle_{L^2_\mu}+u \int \varphi \nabla_x \alpha^\top \nabla_v (\mu \phi)\dxdv \\
 &=\langle \mathcal{L}^* \phi, \varphi \rangle_{L^2_\mu}+u \!\int \mu \varphi \nabla_x \alpha^\top \nabla_v \phi \dxdv -u \!\int \mu \varphi \phi  \nabla_x \alpha ^\top v\dxdv, \\
\end{aligned}
\end{equation}
where we have used \eqref{eq:aux_1}. This leads to  the abstract bilinear controlled system
\begin{equation}\label{eq:abs_bil_cau}
\begin{aligned}
  \ddt y &= Ay + uNy , \quad y(0)=y_0
\end{aligned}
\end{equation}
where 
\begin{align*}
 Ay &= \overline{\mathcal{L}^*}y = -v^\top  \nabla_x y + (\nabla_x G)^\top \nabla_v y  -  v^\top   \nabla_v y+  \Delta_v y \\
 N y &=  (\nabla_x \alpha) ^\top \nabla _v y - y (\nabla_x \alpha)^\top  v  .
\end{align*}
Given $y,z \in  C_0^\infty(\mathbb R^{2d})$, it follows from the computations in \eqref{eq:kk1} that
\begin{equation*}
\begin{aligned}
 \langle z,Ny\rangle_Y &=\int \mu z (\nabla_x \alpha)^\top \nabla_v y \dxdv - \int \mu z y(\nabla _x \alpha)^\top v \dxdv \\&= \langle -(\nabla_x \alpha)^\top \nabla_v z,y\rangle _Y=: \langle N^*z,y\rangle_Y.
\end{aligned}
\end{equation*}
Note that $N$, being a first order differential operator, is not bounded on the state space $Y$. In the context of well-posed linear systems \cite{Sta05}, we consider it as a generator of a well-posed input map  for which we need the following estimate for each $y\in C_0^\infty(\mathbb R^{2d})$:
\begin{equation}\label{eq:N_H1}
\begin{aligned}
 \| Ny\| _{V_v'} &= \sup\limits_{z\in V_v, \| z\|_{V_v}=1 }   \langle Ny, z \rangle _Y =\sup\limits_{z\in V_v, \| z\|_{V_v}=1 } \langle y,N^*z \rangle_Y \\
 &=\sup\limits_{z\in V_v, \| z\|_{V_v}=1 } \| y\| _Y \| (\nabla_x \alpha)^\top \nabla_v z\|_Y \\
 &\le \sup\limits_{z\in V_v, \| z\|_{V_v}=1 }  C \| y\|_ Y \| z \| _{H_{v,\mu}^1(\mathbb R^{2d})}  \le C \| y\|_{Y},
\end{aligned}
\end{equation}
where we used that $\alpha\in W^{1,\infty}(\mathbb R^d)$.

Since $C_0^{\infty}(\mathbb R^{2d})$ is dense in $Y$, we obtain $N\in \mathcal{B}(Y,V_v')$ and $N\in \mathcal{B}(Y,V')$.
Moreover, $A^\dagger = \overline{\mathcal{L}}$ considered as an operator in $Y$, and therefore, with reference to  Lemma \ref{lem:domains}, we also find that  $N\in \mathcal{B}(Y,[\mathcal{D}(A^\dagger)]')$. This is the form in which $N$ will be interpreted in the following steps.

Before we turn to \eqref{eq:abs_bil_cau}, we first focus on the linear system
\begin{align}\label{eq:abs_lin_cau}
 \ddt{y}(t) = Ay(t) + Nw(t), \quad y(0)=y_0 \in Y,
\end{align}
where $w \in L_{\mathrm{loc}}^1([0,\infty);Y)$ is given. Following \cite[Definition 4.1.1]{TucW09}, a \emph{solution of \eqref{eq:abs_lin_cau} in $Y_{-1}:=\mathcal{D}(A^\dagger)'$} is a function
\begin{align*}
 y \in L^1_{\mathrm{loc}}(0,\infty;Y) \cap C([0,\infty);Y_{-1})
\end{align*}
which satisfies the following equation in $Y_{-1}$:
\begin{align}\label{eq:int_form}
y(t)- y_0  = \int_0^t A y(\sigma) + Nw(\sigma) \, \mathrm{d}\sigma \quad \forall t \in [0,\infty),
\end{align}
with $y_0\in Y$.
 In \eqref{eq:int_form} the operator $A$ represents the unique extension of $A\in\mathcal{B}(\mathcal{D}(A),Y)$ as operator in $\mathcal{B}(Y,Y_{-1})$. This extension can be identified with $ (A^{\dagger})'$, see \cite[Proposition 2.3.10]{TucW09}.    We  shall use no extra notation for this extension. Further it will be convenient to recall that \eqref{eq:int_form} is equivalent to
\begin{align}\label{eq:int_form2}
\langle y(t)-y_0, z \rangle_{\mathcal{D}  }  = \int_0^t [\langle y(\sigma) , A^\dagger z \rangle_{Y} + \langle Nw(\sigma), z\rangle_{\mathcal{D}  }]   \, \mathrm{d}\sigma,
\end{align}
for all $z\in \mathcal{D}(A^\dagger)$ and  $t \in [0,\infty)$, see \cite[Remark 4.1.2]{TucW09}.
It is well-known \cite[Proposition 4.1.4]{TucW09} that if $y$ is a solution of \eqref{eq:abs_lin_cau} in $Y_{-1}$, it is given by the unique mild solution
\begin{align}\label{eq:mild_solution_lin}
 y(t) = e^{At} y_0 + \int_0^t e^{A(t-\sigma)} Nw(\sigma) \, \mathrm{d}\sigma.
\end{align}
Next we aim for proving the existence of a solution $y$ and additional regularity.  For this purposes, let us recall the theory of admissible control and observation operators, see, e.g., \cite[Section 4]{TucW09}. The operator $N \in \mathcal{B}(Y,[\mathcal{D}(A^\dagger)]')$ is called an \emph{admissible control operator} for $e^{At}$ if for some $\tau > 0$, $\mathrm{Ran}\, \Phi_\tau \subset Y$, where $\Phi_\tau \in \mathcal{B}(L^2([0,\infty);Y),[\mathcal{D}(A^\dagger)]')$ is defined by
\begin{align}\label{eq:cont_map}
 \Phi_\tau w = \int_0^\tau e^{A(\tau-s)} Nw(s)\, \mathrm{d}s.
\end{align}
Similarly, given $C\in \mathcal{B}(\mathcal{D}(A),Z)$ with $Z$ a Hilbert space, we consider
\begin{align*}
 \ddt{y}(t)&= Ay(t), \quad y(0)=y_0,\quad y_{\mathrm{obs}}(t)=Cy(t)
\end{align*}
and call $C\in \mathcal{B}(\mathcal{D}(A),Z)$ an \emph{admissible observation operator} for $e^{At}$ if for some $\tau,$ $\Psi_\tau \in \mathcal{B}(\mathcal{D}(A),L^2([0,\infty);Z))$ has a continuous extension to $Y$. Here, $\Psi_\tau$ is defined by
\begin{equation}\label{eq:obsv_map}
\begin{aligned}
 (\Psi_\tau y_0)(t)= \begin{cases} Ce^{At}y_0 & \text{for } t \in [0,\tau], \\
                      0 & \text{for } t>\tau .
                     \end{cases}
\end{aligned}
\end{equation}
Equivalently (see \cite[Section 4.3]{TucW09}), $C\in \mathcal{B}(\mathcal{D}(A),Z)$ is an admissible observation operator for $e^{At}$ if and only if, for some $\tau >0$, there exists a constant $K_\tau \ge 0$ s.t.
\begin{align*}
 \int_0^\tau \| Ce^{At} y_0\|_Z^2 \,\mathrm{d}t \le K_\tau^2 \| y_0\|_Y^2 \quad \forall y_0 \in \mathcal{D}(A).
\end{align*}
From \cite[Theorem 4.4.3]{TucW09}, we know that $N\in \mathcal{B}(Y,[\mathcal{D}(A^\dagger)]')$ is an admissible control operator for $e^{At}$ if and only if $N' \in \mathcal{B}(\mathcal{D}(A^\dagger),Y)$ is an admissible observation operator for $(e^{At})^\dagger=e^{A^\dagger t}$.

\begin{proposition}\label{prop:mild_solution_lin} Let Assumptions \eqref{ass:A1} and \eqref{ass:A2} hold.
 Then  for every $y_0 \in Y$ and every $w \in L^2_{\mathrm{loc}}(0,\infty;Y)$, the initial value problem \eqref{eq:abs_lin_cau} has a unique solution in $Y_{-1}$. It  is given by \eqref{eq:mild_solution_lin} and it satisfies
  \begin{align*}
    y\in C([0,\infty);Y) \cap H^1_{\mathrm{loc}}(0,\infty;Y_{-1}).
  \end{align*}
\end{proposition}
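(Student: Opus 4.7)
The plan is to reduce the regularity claim to verifying that $N$ is an admissible control operator for $e^{At}$. Once admissibility is in hand, existence and uniqueness of a $Y_{-1}$--solution together with the mild-solution representation \eqref{eq:mild_solution_lin} are already granted by the cited [Proposition 4.1.4, Tucsnak--Weiss], and the upgrade $y\in C([0,\infty);Y)$ is precisely what admissibility adds to $C([0,\infty); Y_{-1})$. By [Theorem 4.4.3, Tucsnak--Weiss] admissibility of $N$ is equivalent to $N' \in \mathcal{B}(\mathcal{D}(A^\dagger),Y)$ being an admissible observation operator for the adjoint semigroup $(e^{At})^\dagger = e^{A^\dagger t} = e^{\overline{\cL}t}$, and the equivalent quantitative criterion stated in Section~\ref{s3} reduces the task to producing $\tau>0$ and $K_\tau\ge 0$ with
\begin{equation*}
\int_0^\tau \|N' e^{\overline{\cL} t} y_0\|_Y^2 \dd t \le K_\tau^2 \|y_0\|_Y^2 \qquad \forall\, y_0 \in \mathcal{D}(\overline{\cL}).
\end{equation*}

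First I would identify $N'$. The duality computation in \eqref{eq:kk1}, where the drift contribution $-y(\nabla_x \alpha)^\top v$ is exactly the boundary term generated by $\nabla_v \mu = -\mu v$, shows on $\Cc$ that $\langle Ny, z\rangle_Y = \langle y, -(\nabla_x\alpha)^\top \nabla_v z\rangle_Y$. By density this extends to $N' z = -(\nabla_x\alpha)^\top \nabla_v z$ on $\mathcal{D}(A^\dagger)$, and Assumption \ref{ass:A2} yields the pointwise bound $\|N' z\|_Y \le \|\nabla_x\alpha\|_{L^\infty(\R^d)} \|\nabla_v z\|_Y$.

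The decisive step is then the energy identity from Lemma~\ref{lem:domains}. For $y_0 \in \mathcal{D}(\overline{\cL})$ and $\varphi(t) := e^{\overline{\cL}t} y_0$, the product rule combined with $\frac{d\varphi}{dt} = \overline{\cL}\varphi$ gives
\begin{equation*}
\tfrac{1}{2}\ddt \|\varphi(t)\|_Y^2 = \langle \overline{\cL}\varphi(t), \varphi(t)\rangle_Y = -\|\nabla_v \varphi(t)\|_Y^2,
\end{equation*}
so integration on $[0,\tau]$ produces $\int_0^\tau \|\nabla_v e^{\overline{\cL}t} y_0\|_Y^2 \dd t \le \tfrac{1}{2}\|y_0\|_Y^2$. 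Combined with the bound on $N'$ this closes the admissibility inequality with $K_\tau^2 = \tfrac{1}{2}\|\nabla_x\alpha\|_{L^\infty(\R^d)}^2$, independently of $\tau$; the contraction property of $e^{\overline{\cL}t}$ also gives an upper bound on the transient.

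Having admissibility, continuity of the convolution term $t\mapsto \int_0^t e^{A(t-s)} N w(s) \dd s$ from $[0,\infty)$ into $Y$ is standard, e.g.\ via [Proposition 4.2.5, Tucsnak--Weiss], and adding the continuous orbit $e^{At} y_0$ produces $y \in C([0,\infty);Y)$. The membership $y\in H^1_{\mathrm{loc}}(0,\infty;Y_{-1})$ is then immediate from the differentiated form of \eqref{eq:int_form}: $A \in \mathcal{B}(Y,Y_{-1})$ and $N\in \mathcal{B}(Y,Y_{-1})$ force $Ay + Nw \in L^2_{\mathrm{loc}}(0,\infty;Y_{-1})$, which is $\ddt y$. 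I expect the main obstacle to be the bookkeeping around admissibility: passing to the adjoint observation formulation, computing $N'$ carefully so that only $\nabla_v z$ (and not $\nabla_x z$) appears, and matching that with the Lemma~\ref{lem:domains} estimate so that the $L^\infty$ bound from Assumption~\ref{ass:A2} is sufficient.
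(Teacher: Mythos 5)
Your proof is correct and follows essentially the same route as the paper: both hinge on identifying $N'z = -(\nabla_x\alpha)^\top\nabla_v z$, the bound $\|N'z\|_Y \le C\|\nabla_v z\|_Y$ from Assumption \ref{ass:A2}, and the dissipation identity of Lemma \ref{lem:domains} to establish admissibility, then invoke \cite[Proposition 4.2.5]{TucW09}. The only cosmetic difference is that the paper packages the energy estimate through the abstract Lyapunov criterion \cite[Theorem 5.1.1]{TucW09} with $\Pi = M\cdot\mathrm{id}$, whereas you integrate the identity $\tfrac{1}{2}\ddt\|\varphi(t)\|_Y^2=-\|\nabla_v\varphi(t)\|_Y^2$ directly along trajectories; these are the same computation in different clothing, and your $\tau$-independent constant likewise gives infinite-time admissibility.
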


\begin{proof}
By Proposition \ref{prop:adjoint_relation} and Lemma \ref{lem:domains}, the operator $\Pi=M\cdot \mathrm{id} \in \mathcal{B}(Y)$,  with $M>0$, satisfies $\Pi\ge 0 $ as well as
  \begin{align*}
  \langle \Pi z, A^\dagger z \rangle_Y &= M \langle z,(\overline{\mathcal{L^*}})^{\dagger}z \rangle_Y= M \langle z,\overline{\mathcal{L}} z \rangle_Y =-M\| \nabla_v z\|_Y^2  \quad \forall z \in \mathcal{D}(A^\dagger).
\end{align*}
On the other hand, for $N'$ and $z \in \mathcal{D}(A^\dagger)$, we obtain
\begin{align}\label{eq:test_N}
 \|N'z\|_Y^2 = \| \nabla_x \alpha^\top \nabla _v z\|_Y^2 \le C \| \nabla_v z\| _{Y}^2.
\end{align}
Hence, choosing $M>0$ large enough, it follows that
\begin{align*}
2   \langle \Pi z, A^\dagger z \rangle_Y \le -\| N'z\| _Y^2 \quad \forall z \in \mathcal{D}(A^\dagger),
\end{align*}
which, by \cite[Theorem 5.1.1]{TucW09}, implies that $N'$ is an infinite-time admissible observation operator for $e^{A^\dagger t}$. Hence, $N$ is an infinite-time admissible control operator for $e^{At}$ and the result then follows with \cite[Proposition 4.2.5]{TucW09}.
\end{proof}

We shall make  use of the following density result.

\begin{lemma}\label{le:density}
The space $\Cc$ is dense in $V$.
\end{lemma}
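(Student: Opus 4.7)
The plan is the standard truncation-plus-mollification argument for weighted Sobolev spaces, where the weight $\mu = Z_\mu^{-1} e^{-H(x,v)}$ cooperates because of Assumption~\eqref{ass:A1}.

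The first observation I would record is that under \eqref{ass:A1} the Hamiltonian $H(x,v)=\tfrac12\|v\|^2+G(x)$ is locally bounded (since $G\in W^{1,\infty}_{\mathrm{loc}}$ is in particular locally bounded), so on every compact $K\subset\mathbb R^{2d}$ the density of $\mu$ satisfies $0<c_K\le \mu\le C_K<\infty$. Consequently the norms of $L^2_\mu$ and $L^2$ are equivalent on functions supported in $K$, and likewise for $H^1_\mu(K)$ versus $H^1(K)$. In particular, any $y\in V$ automatically belongs to $H^1_{\mathrm{loc}}(\mathbb R^{2d})$.

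Next I would truncate. Fix a standard cutoff $\chi\in C_0^\infty(\mathbb R^{2d})$ with $\chi=1$ on the unit ball and $\chi=0$ outside the ball of radius $2$, and set $\chi_R(x,v)=\chi(x/R,v/R)$, so that $\|\nabla\chi_R\|_\infty\le C/R$. For $y\in V$ define $y_R=\chi_R y$. Dominated convergence in the integrable envelopes $y^2\mu$ and $|\nabla y|^2\mu$ yields
\begin{equation*}
\|y_R-y\|_Y\to 0,\qquad \|\chi_R\nabla y-\nabla y\|_{Y^{2d}}\to 0,
\end{equation*}
while the extra term in $\nabla y_R=\chi_R\nabla y+y\nabla\chi_R$ satisfies $\|y\nabla\chi_R\|_{Y^{2d}}\le (C/R)\|y\|_Y\to 0$. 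Hence $y_R\to y$ in $V$, and each $y_R$ has compact support contained in the ball $B_{2R}$.

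Finally I would mollify. Let $\rho_\varepsilon$ be a standard smooth mollifier on $\mathbb R^{2d}$ and set $y_R^{\varepsilon}=\rho_\varepsilon*y_R$. For $\varepsilon<1$ the support of $y_R^{\varepsilon}$ lies in the fixed compact set $K_R=\overline{B}_{2R+1}$, so $y_R^{\varepsilon}\in C_0^\infty(\mathbb R^{2d})$. Using the local equivalence of the weighted and unweighted norms on $K_R$, together with the classical fact that mollification converges in $H^1$ for compactly supported $H^1$ functions, we obtain
\begin{equation*}
\|y_R^{\varepsilon}-y_R\|_V \le C(K_R)\,\|y_R^{\varepsilon}-y_R\|_{H^1(K_R)}\xrightarrow[\varepsilon\to 0]{}0.
\end{equation*}
A diagonal choice $\varepsilon=\varepsilon(R)\to 0$ then produces a sequence in $\Cc$ converging to $y$ in $V$.

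The only subtle point to verify carefully is the local two-sided bound on the weight $\mu$; once that is in hand, both the truncation estimate and the justification of mollification reduce to standard Lebesgue/Friedrichs arguments on compact sets, and no further smoothness of $G$ beyond \eqref{ass:A1} is required.
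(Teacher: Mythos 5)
Your proof is correct and follows essentially the same truncation-plus-mollification strategy as the paper, which adapts \cite[Theorem 3.18]{Ada75} in the same spirit (truncate $y$ by a dilated cutoff, then regularize by convolution, then diagonalize). The only cosmetic difference is in the justification of the mollification step: you use local two-sided bounds on $\mu$ to get equivalence of the weighted and unweighted $H^1$ norms on the compact set $K_R$, while the paper invokes the global upper bound $\mu\lesssim e^{-\min_x G}$, which is available since $G$ is bounded below under Assumption~\eqref{ass:A1}; both are valid, and your variant is if anything cleaner since it never needs the lower bound on $G$.
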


The proof is well known in the case without weights \cite[Theorem
3.18]{Ada75} and can be extended to the weighted case in a straightforward manner. Henceforth we will consider $N$ as an operator in $ \mathcal{B}(Y,V_v')$.

\begin{proposition}\label{prop:lin_more_reg}Let Assumptions \eqref{ass:A1} and \eqref{ass:A2} hold. Then
  for every $y_0 \in Y$ and every $w \in L^2_{\mathrm{loc}}(0,\infty;Y)$, the unique solution $y$ of \eqref{eq:abs_lin_cau} satisfies $\nabla_v y \in L^2_{\mathrm{loc}}(0,\infty;Y^d)$. Additionally, for all $T>0$ there exist constants $C_1,C_2(T),C_3(T)$ s.t.
  \begin{subequations}\label{eq:aprio_lin}
\begin{eqnarray}
  \mathrm{max}( \|y\|_{L^{\infty}(0,T;Y)}, \|\nabla_v y\|_{L^{2}(0,T;Y)})  &&\hspace{-0.5cm}\le C_1 \left( \| y_0\|_Y + \| w\|_{L^2(0,T;Y)} \right), \ \ \ \\
   \|y\|_{L^2(0,T;V_v)}  &&\hspace{-0.5cm}\le C_2(T) \left( \| y_0\|_Y + \| w\|_{L^2(0,T;Y)} \right), \qquad \ \ \ \\
   \|\dot y\|_{L^2(0,T;Y_{-1})}   &&\hspace{-0.5cm}\le C_3(T) \left( \| y_0\|_Y + \| w\|_{L^2(0,T;Y)} \right). \ \  \
\end{eqnarray}
\end{subequations}
\end{proposition}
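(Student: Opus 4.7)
My approach is to derive an energy estimate for smooth data and then extend it by density. For smooth data, say $y_0 \in C_0^\infty(\R^{2d})$ and $w$ smooth in time with compactly supported smooth spatial profile, the solution $y$ is regular enough that pairing \eqref{eq:abs_lin_cau} with $y$ in $Y$ is legitimate; Lemma \ref{lem:domains} yields $\langle Ay,y\rangle_Y = -\|\nabla_v y\|_Y^2$, while the formal adjoint $N^* z = -(\nabla_x\alpha)^\top \nabla_v z$ computed after \eqref{eq:kk1}, combined with Assumption \eqref{ass:A2}, gives
\begin{equation*}
|\langle Nw, y\rangle_Y| = |\langle w, N^* y\rangle_Y| \le \|\nabla_x\alpha\|_\infty \|w\|_Y \|\nabla_v y\|_Y.
\end{equation*}
Absorbing half of the gradient term via Young's inequality produces
\begin{equation*}
\ddt \|y\|_Y^2 + \|\nabla_v y\|_Y^2 \le \|\nabla_x\alpha\|_\infty^2 \|w\|_Y^2,
\end{equation*}
and a direct integration from $0$ to $t$ together with taking the supremum over $t\in[0,T]$ yields the first estimate of \eqref{eq:aprio_lin} with a constant $C_1$ independent of $T$.

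For general $y_0 \in Y$ and $w \in L^2(0,T;Y)$, I would approximate by smooth data $y_0^n, w_n$ of the type above. By the continuous dependence inherent in the mild solution formula \eqref{eq:mild_solution_lin}, the corresponding $y_n$ converge to $y$ in $C([0,T];Y)$. The previous step supplies a uniform bound on $\{\nabla_v y_n\}$ in $L^2(0,T;Y^d)$; extracting a weakly convergent subsequence and identifying the limit distributionally with $\nabla_v y$ establishes the claimed regularity $\nabla_v y \in L^2_{\mathrm{loc}}(0,\infty; Y^d)$, and weak lower-semicontinuity transfers the first estimate to $y$ itself.

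The second estimate then follows immediately from the splitting
\begin{equation*}
\|y\|_{L^2(0,T;V_v)}^2 \le T\,\|y\|_{L^\infty(0,T;Y)}^2 + \|\nabla_v y\|_{L^2(0,T;Y)}^2,
\end{equation*}
which encapsulates the $T$-dependence of $C_2(T)$. For the third estimate, I use that $A=(A^\dagger)' \in \mathcal{B}(Y, Y_{-1})$ (the extension invoked after \eqref{eq:int_form}) and $N \in \mathcal{B}(Y, V_v') \hookrightarrow \mathcal{B}(Y, Y_{-1})$ (the last inclusion because $\mathcal{D}(A^\dagger)\hookrightarrow V_v$ by Lemma \ref{lem:domains}) both map $Y$ boundedly into $Y_{-1}$, so the equation itself yields the pointwise bound $\|\dot y(t)\|_{Y_{-1}} \le C(\|y(t)\|_Y + \|w(t)\|_Y)$. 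Squaring, integrating in time, and inserting the first estimate (whose use of $\|y\|_{L^2(0,T;Y)}^2 \le T\|y\|_{L^\infty(0,T;Y)}^2$ is the source of the $T$-dependence) produces the third bound.

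\textbf{Main obstacle.} The delicate point is rigorously justifying the energy identity in the first step: because $N$ is unbounded on $Y$, mere $\mathcal{D}(A)$-valued initial data does not produce a classical solution for which $Nw \in Y$. I would resolve this by choosing $w_n$ smooth with compact spatial support so that $Nw_n$ is itself a classical function and the integration-by-parts computation leading to $N^*$ in \eqref{eq:kk1} holds pointwise; an alternative would be a Yosida regularization of $A$ to transport the argument into $\mathcal{D}(A)$ before letting the regularization parameter tend to infinity.
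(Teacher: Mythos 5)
Your proposal is mathematically sound, and the estimates and constants all check out, but it takes a genuinely different route from the paper. The paper regularizes the \emph{operator}: it introduces a perturbed bilinear form $a_\varepsilon$ with artificial diffusion in $x$ (formula \eqref{eq:per_bil_form}), which produces a $V$-$Y$ coercive operator $A_\varepsilon$ generating an analytic semigroup, so maximal-regularity theory gives a variational solution $y_\varepsilon \in L^2(0,T;V)\cap H^1(0,T;V')$; it then derives $\varepsilon$-uniform a priori bounds of exactly your energy type, and passes to the limit $\varepsilon\to 0$, identifying the limit as the mild solution via the weak formulation \eqref{eq:aux4}--\eqref{eq:aux6}. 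You instead regularize the \emph{data}: take $y_0\in C_0^\infty$ and $w$ smooth with compactly supported profile, so that $Nw(t)\in Y$ (indeed $\in\mathcal{D}(A)$), and invoke the classical theorem (Pazy-type) that $y_0\in\mathcal{D}(A)$, $f\in C^1([0,T];Y)$ gives a classical solution $y(t)\in\mathcal{D}(A)$; Lemma \ref{lem:domains} then makes the energy identity rigorous, and the continuous-dependence statement needed to pass to the limit in $C([0,T];Y)$ is exactly the infinite-time admissibility of $N$ already proved in Proposition \ref{prop:mild_solution_lin}. Both proofs are essentially scaffolding to justify the same formal computation \eqref{eq:aprio_aux}. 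Yours is shorter and more elementary, at the price of leaning on Proposition \ref{prop:mild_solution_lin} as a black box for continuous dependence and of needing the small argument that the distributional limit of $\nabla_v y_n$ is $\nabla_v y$ in the weighted space; the paper's artificial-diffusion device is more self-contained and delivers the slightly stronger intermediate product $y_\varepsilon\in L^2(0,T;V)$ (full $\nabla_x$ control, uniform after rescaling by $\sqrt{\varepsilon}$), which is why the authors likely preferred it. One small caution: in the final limiting step make sure you extract the weak subsequence in $L^2(0,T;Y^d)$ and identify it by testing against $C_0^\infty((0,T)\times\R^{2d})$ with the weight $\mu$ absorbed into the test function, so that the identification really takes place in the weighted space $Y^d$ and not merely in $\mathcal{D}'$.
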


\begin{proof}
We will show the assertion by introducing a suitable perturbation which provides the new operator $A_{\varepsilon}$ with a $Y$-$V$ coercivity property. As a consequence, we can resort to maximal regularity results of analytic semigroups to obtain (uniform) a priori estimates for the corresponding solution $y_\varepsilon$. By passing to the limit $\varepsilon \to 0$, we obtain similar estimates for the unperturbed solution $y$.

For $\varepsilon>0$, we define the bilinear form $a_{\varepsilon}\colon C^\infty_0(\mathbb{R}^{2d})\times C^\infty_0(\mathbb{R}^{2d}) \to \mathbb R$ by
\begin{equation}\label{eq:per_bil_form}
\begin{aligned}
 a_{\varepsilon}(y,z)&:= \int \nabla_v (\mu z)^\top \nabla_v y \dxdv+ \int \mu z (v^\top \nabla_x y-\nabla_x G^\top \nabla_v y + v^\top \nabla_v y) \dxdv  \\
 &\quad +\varepsilon \int \nabla_x(\mu z)^\top \nabla_x y \dxdv +\varepsilon \int \mu z \nabla_xG^\top \nabla _x y \dxdv.
\end{aligned}
\end{equation}
It arises from testing $Ay$ with $z$, adding artificial diffusion and integration by parts. Next we rearrange terms in  $a_{\varepsilon}$ and obtain
\begin{align*}
 a_\varepsilon(y,z)&= \int z\nabla _v \mu ^\top \nabla_v y \dxdv +\int \mu \nabla_v z^\top \nabla_v y\dxdv+\int \mu z v^\top \nabla_v y\dxdv \\
 &\qquad +\int \mu z(v^\top \nabla_x y - \nabla_x G^\top \nabla _vy)\dxdv  +\varepsilon \int z\nabla_x \mu^\top \nabla_x y \dxdv \\&\qquad  + \varepsilon \int \mu \nabla_x z^\top \nabla_x y \dxdv+ \varepsilon \int \mu z \nabla_x G^\top \nabla_x y \dxdv \\
 &= \int \mu \nabla_v z^\top \nabla_v y\dxdv + \varepsilon \int \mu \nabla_x z^\top \nabla_x y\dxdv \\
 &\qquad +\int \mu z(v^\top \nabla_x y - \nabla_x G^\top \nabla _vy)\dxdv.
\end{align*}
Considering the last term explicitly and using \eqref{eq:aux_1}, we find that
\begin{align*}
& \int \mu z (v^\top \nabla_x y - \nabla_x G^\top \nabla_v y) \dxdv = -\int z \nabla_v \mu^\top \nabla_x y \dxdv + \int z \nabla_x\mu^\top \nabla_v y \dxdv\\
 &\quad=\int \mu \mathrm{div}_v (z \nabla _x y) \dxdv - \int \mu \mathrm{div}_x (z \nabla_v y)\dxdv \\
 &\quad=\int \mu \nabla_v z^\top \nabla _x y\dxdv +\int\mu z \mathrm{div}_v (\nabla_x y)\dxdv \\&\qquad -\int \mu \nabla_x z^\top \nabla_v y\dxdv - \int \mu z \mathrm{div}_x(\nabla _v y)\dxdv \\
 &\quad= \int \mu \nabla_v z^\top \nabla _x y \dxdv -\int \mu \nabla_x z^\top \nabla_v y \dxdv.
\end{align*}
Altogether, for $y,z\in C_0^\infty(\mathbb{R}^{2d})$ we have established that
\begin{equation}\label{eq:ae_simplified}
\begin{aligned}
 a_\varepsilon(y,z)&=\int \mu \nabla_v z^\top \nabla_v y \dxdv + \varepsilon \int \mu \nabla_x y^\top \nabla_x z \dxdv \\&\qquad  + \int \mu \nabla_v z^\top \nabla _x y \dxdv -\int \mu \nabla_x z^\top \nabla_v y \dxdv.
\end{aligned}
\end{equation}
Utilizing density of $C_0^\infty(\mathbb{R}^{2d})$ in $V$ it follows that
$a_\varepsilon$ can be extended to a bounded bilinear  form on $V \times V$. With $a_{\varepsilon}$ we associate the operator $A_{\varepsilon}$ in $Y$ by
\begin{align*}
 \mathcal{D}(A_{\varepsilon})&=\left\{ y \in V\ | \ z\mapsto a_{\varepsilon}(y,z) \text{ is $Y$-continuous} \right\}, \\
 \langle A_{\varepsilon} y,z\rangle_Y &= - a_{\varepsilon}(y,z), \ \ \forall \,y \in\mathcal{D}(A_\varepsilon),  \ z \in V.
\end{align*}
Without change of notation we also consider the extension $A_{\varepsilon}\in \mathcal{B}(V,V')$ defined by
$
\langle A_{\varepsilon} y,z\rangle_{V',V} = - a_{\varepsilon}(y,z)$ for all $y,z \in V.$
As an aside we  note that for $y,z \in \Cc$ it holds that
\begin{align*}
 \langle A_\varepsilon y,z \rangle _Y &= \langle Ay,z\rangle_Y + \langle R_\varepsilon y,z\rangle_Y,
\end{align*}
where the action of $R_\varepsilon$ is defined by
\begin{align}\label{eq:action_R}
 R_{\varepsilon} y =\varepsilon \Delta_x y- \varepsilon \nabla_x G^\top \nabla_x y.
\end{align}
For $y \in \Cc$ we obtain, utilizing that $G\in W_{\mathrm{loc}}^{1,\infty}(\mathbb{R}^d)$,
\begin{equation}\label{eq:test_Ae}
\begin{aligned}
 -a_{\varepsilon}(y,y)&=\langle A_{\varepsilon} y,y\rangle_Y = \langle \overline{\mathcal{L}^*} y,y\rangle_Y + \varepsilon \int \mu y\Delta_x y \dxdv- \varepsilon \int \mu y\nabla_x G^\top \nabla_x y \dxdv \\
 &=-\| \nabla_v y\|_Y^2 - \varepsilon \int \nabla_x( \mu y)^\top \nabla_x y \dxdv- \varepsilon \int \mu y\nabla_x G^\top \nabla_x y \dxdv\\
 &=-\| \nabla_v y\|_Y^2 - \varepsilon \int y\nabla_x \mu^\top \nabla_x y \dxdv \\&\quad- \varepsilon \int \mu\nabla_x y^\top \nabla_x y \dxdv- \varepsilon \int \mu y\nabla_x G^\top \nabla_x y \dxdv\\
 &= -\|\nabla_v y \| _Y ^2 - \varepsilon\| \nabla_x y \| _Y^2,
\end{aligned}
\end{equation}
where in  the last step we referred to  \eqref{eq:aux_1}. Using density of $\Cc$ in $V$, we conclude that \eqref{eq:test_Ae} holds for all $y\in V$ and thus  \emph{$V$-$Y$ coercivity} of $a_\varepsilon$ follows. Hence $A_\varepsilon$ generates an analytic semigroup in $Y$, see \cite[Part II, Chapter 1, Theorem 2.12]{Benetal07}. We next  consider the perturbed variational equation in $V'$
 \begin{align}\label{eq:per_lin}
  \ddt{y}_{\varepsilon}(t)=A_{\varepsilon}y_{\varepsilon}(t)+Nw(t), \ \ y_{\varepsilon}(0)=y_0 \in Y,
 \end{align}
for which we conclude from (\cite[Part II, Chapter 2, Theorem 1.1]{Benetal07}) that it admits a unique solution
\begin{align}\label{eq:per_sol}
 y_\varepsilon \in L^2(0,T;V) \cap H^1(0,T;V'). 
\end{align}
Taking the inner product with $\mu y_\varepsilon$ and integrating over $\mathbb R^{2d}$ yields
\begin{align*}
 \langle  \tfrac{\mathrm{d}}{\mathrm{d}t} y_\varepsilon(t),y_\varepsilon(t) \rangle _{V',V}  = \langle A_\varepsilon y_\varepsilon(t),y_\varepsilon(t) \rangle _{V',V}  + \langle Nw(t),y_\varepsilon(t)\rangle_{V_v',V_v},
\end{align*}
for almost every $t \in (0,T)$. With \eqref{eq:test_N} and \eqref{eq:test_Ae}, and the computation in \eqref{eq:abs_bil_cau} it  follows that
\begin{equation}\label{eq:aprio_aux}
\begin{aligned}
 \tfrac{1}{2}\tfrac{\mathrm{d}}{\mathrm{d}t} \| y_\varepsilon \|_Y^2
&=\langle A_\varepsilon y_\varepsilon,y_\varepsilon \rangle_{V',V}  + \langle w,N'y_\varepsilon\rangle_Y  = -\|\nabla_v y_\varepsilon\|_Y^2 - \varepsilon \|\nabla_x y_\varepsilon \|_Y^2 +\| w\|_Y   \| N'y_\varepsilon\|_Y \\
 &\le -\|\nabla_v y_\varepsilon\|_Y^2 - \varepsilon \|\nabla_x y_\varepsilon \|_Y^2 + \| \alpha\|_{W^{1,\infty}(\mathbb R^{d})} \| w\|_Y \| \nabla _v y_\varepsilon\|_Y   \\
 &\le -\tfrac{1}{2}\|\nabla_v y_\varepsilon\|_Y^2 - \varepsilon \|\nabla_x y_\varepsilon \|_Y^2 + \tfrac{1}{2}\| \alpha\|^2_{W^{1,\infty}(\mathbb R^{d})}  \| w\|_Y^2,
\end{aligned}
\end{equation}
where we suppress the dependence on $t$. This  implies that
\begin{align*}
&\| y_\varepsilon(t)\|_Y^2 +\| \nabla_v y_\varepsilon\|_{L^2(0,t;Y)}^2 + 2 \varepsilon \| \nabla_x y_\varepsilon\|_{L^2(0,t;Y)}^2 \\
&\qquad \le \| y_\varepsilon(0)\|_Y^2  +\| \alpha\|^2_{W^{1,\infty}(\mathbb R^{d})} \int_0^t \|w(s)\|_Y^2 \, \mathrm{d}s
\end{align*}
and, hence, we obtain for a constant $M$, independent of $T$, that
\begin{equation}\label{eq:aprio_aux3}
\begin{aligned}
\max \left( \| y_\varepsilon \| _{L^\infty(0,T;Y)}, \| \nabla_v y_\varepsilon \| _{L^2(0,T;Y)}\right)&\le M \left( \|y_0\|_Y + \| w\|_{L^2(0,T;Y)} \right), \\[1ex]
\qquad \sqrt{\varepsilon} \| \nabla_x y_\varepsilon\|_{L^2(0,T;Y)} &\le M\left( \|y_0\|_Y + \| w\|_{L^2(0,T;Y)} \right).
\end{aligned}
\end{equation}

Next we pass to the limit as $\varepsilon \to 0$. Due to \eqref{eq:per_sol} we have that $y_\varepsilon\in C([0,T],Y)$, so that pointwise evaluation at $t\in [0,T]$ is justified. Moreover, the extension of $A$ to  $Y$ satisfies $A= (A^{\dagger})' \in \mathcal{B}(Y,Y_{-1})$, and thus  $A y_{\varepsilon}\in C([0,T];Y_{-1})$. Further $y_{\varepsilon}$ satisfies
\begin{equation}\label{eq:aux3a}
\langle \tfrac{\mathrm{d}}{\mathrm{d}t} y_\varepsilon(t),\varphi \rangle_{V',V} = \langle  A_\varepsilon  y_\varepsilon(t) + N w(t) ,\varphi \rangle_{V',V}
\end{equation}
for a.e. $t\in(0,T)$ and every $\varphi \in V$. For $\varphi\in V\cap \mathcal{D}(A^{\dagger})$ we have
$$\langle  A y_\varepsilon(t),\varphi \rangle_{V',V} =\langle  A y_\varepsilon(t),\varphi \rangle_{\mathcal{D}(A^{\dagger})',\mathcal{D}(A^{\dagger})} =  \langle  y_\varepsilon(t), A^\dagger \varphi \rangle_{Y}$$ and hence together with $y_\varepsilon\in C([0,T],Y)$ and \eqref{eq:ae_simplified}, we find
\begin{equation*}\label{eq:aux3}
\begin{array}{ll}
\langle y_\varepsilon(t) -y_0,\varphi(t)\rangle_Y\!\! &= \int_0^t \langle y_\varepsilon(s), A^{\dagger}\varphi(t) \rangle_Y \,\mathrm{d}s + \int_0^t \langle N w(s), \varphi(t) \rangle_{V',V} \,\mathrm{d}s  \\[1.5ex]
&- \varepsilon \int_0^t \langle \nabla_x y_\varepsilon(s),\nabla_x \varphi(t)\rangle_Y \,\mathrm{d}s,
\end{array}
\end{equation*}
for every $t\in[0,T]$. Consequently we obtain for every  $\varphi\in L^2(0,T; V\cap \mathcal{D}(A^{\dagger}))$:
\begin{equation}\label{eq:aux4}
\begin{array}{ll}
\int_0^T \langle y_\varepsilon(t)- y_0,\varphi(t)\rangle_Y \,\mathrm{d}t  = &\int _0^T\int_s^T \big( \langle y_\varepsilon(s), A^{\dagger}\varphi(t) \rangle_Y + \langle   N w(s), \varphi(t)   \rangle_{V',V}   \big) \, \mathrm{d}t\, \mathrm{d}s \\[1.5ex]
&- \varepsilon \int _0^T\int_s^T \langle \nabla_x y_\varepsilon(s),\nabla_x \varphi(t)\rangle_Y \,\mathrm{d}t\,\mathrm{d}s.
\end{array}
\end{equation}
Using \eqref{eq:aprio_aux3} we obtain:
\begin{equation*}
\begin{aligned}
&\varepsilon | \int _0^T\int_s^T \langle \nabla_x y_\varepsilon(s),\nabla_x \varphi(t) \rangle_Y \, \mathrm{d}t\, \mathrm{d}s | \\
&\quad \le \varepsilon T (\int^T_0|\nabla_x y_{\varepsilon}(s)|^2_Y   \, \mathrm{d}s )^{\frac{1}{2}} (\int^T_0|\nabla_x \varphi(s)|^2_Y   \, \mathrm{d}s )^{\frac{1}{2}} \to 0
\end{aligned}
\end{equation*}
for $\varepsilon \to 0$.

By \eqref{eq:aprio_aux3} there exists a null-sequence $\{\varepsilon_k\}_{k=1}^\infty$, and $y\in L^2(0,T;V_v)$ such that
\begin{equation*}
\lim\limits_{k\to \infty} y_{\varepsilon_k} \rightharpoonup y \text{ in } L^2(0,T;V_v).
\end{equation*}
Utilizing that  $\int _0^T\int_s^T  \langle y_\varepsilon(s), A^{\dagger}\varphi(t) \rangle_Y \mathrm{d}t\, \mathrm{d}s = \int _0^T\ \langle y_\varepsilon(s),\int_s^T A^{\dagger}\varphi(t)\,\mathrm{d}t\,  \rangle_Y \mathrm{d}s $
we can pass to the limit in \eqref{eq:aux4} with $\varepsilon= \varepsilon_k$ and obtain that
\begin{equation}\label{eq:aux10}
\begin{array}{l}
\int_0^T \big\langle \, y(t) - y_0 - \int _0^t A y(s) ,\varphi(t) \, \big \rangle_{\mathcal{D}(A^\dagger)',\mathcal{D}(A^\dagger)} \, \mathrm{d}s\, \mathrm{d}t\\[1.5ex]
\qquad - \int_0^T \int _0^t \langle Nw(s), \varphi(t) \rangle_{V',V} \, \mathrm{d}s\, \mathrm{d}t  = 0,
\end{array}
\end{equation}
for all $w\in L^2(0,T; V\cap \,\mathcal{D}(A^{\dagger}))$. Since $A^\dagger=\overline{\cL}$ by Proposition \ref{prop:adjoint_relation} it follows that $C^\infty_0(\mathbb{R}^d)$ is a core for $A^\dagger$ and thus $C^\infty_0(\mathbb{R}^d)$ is dense in the graph norm of $A^\dagger$ in $\mathcal{D}(A^{\dagger})$.
Hence $L^2(0,T; C^{\infty}_0(\Omega))$ is dense in $ L^2(0,T; \mathcal{D}(A^{\dagger}))$. Together with the fact  that $N\in  \mathcal{B}(Y,\mathcal{D}(A^\dagger)')$ equation \eqref{eq:aux10} implies that
\begin{equation}\label{eq:aux5}
\int_0^T \big\langle \, y(t) - y_0 - \int _0^t (A y(s) + N w(s)) \, \mathrm{d}s ,\varphi(t) \, \big \rangle_{\mathcal{D}(A^\dagger)',\mathcal{D}(A^\dagger)} \,  \mathrm{d}t = 0,
\end{equation}
 and consequently we have
\begin{equation}\label{eq:aux6}
 y(t)  - y_0 - \int _0^t A y(s) + N w(s) \,\mathrm{d}s = 0, \text{ in } Y_{-1}   \text{ and a.e. } t \in (0,T).
\end{equation}
Since $y\in L^2(0,T;Y)$ we have that $Ay\in L^2(0,T;Y_{-1})$ and thus $y$ is absolutely continuous with values in $Y_{-1}$. By the discussion below \eqref{eq:abs_lin_cau} therefore, the  limit $y$ of $y_{\varepsilon_k}$ is the unique solution to \eqref{eq:int_form}. Consequently the whole family $y_{\varepsilon}$ has $y$ as its limit.  The a priori estimates \eqref{eq:aprio_lin} follow from \eqref{eq:aprio_aux3} and \eqref{eq:aux6}.
\end{proof}

We now turn to  well-posedness of equation  \eqref{eq:abs_bil_cau} by combining the results of \cref{prop:lin_more_reg} with classical fixed point type arguments that can be found similarly in \cite{BalMS04} or \cite{HosJS18}. The proofs are, by now, rather standard and are therefore deferred to the appendix.

For $t_0$ sufficiently small and $u\in L^2(0,t_0)$, the next theorem justifies the introduction of a \emph{mild solution} of \eqref{eq:abs_bil_cau} which for $0\le t\le t_0$ satisfies the relation
\begin{align}\label{eq:mild_solution_bilinear}
 y(t) = e^{At}y_0 + \int_0^t e^{A(t-s)}Ny(s)u(s)\, \mathrm{d}s.
\end{align}

\begin{theorem}[Local mild solution]\label{thm:local_mild_solution} Let Assumptions \eqref{ass:A1} and \eqref{ass:A2} hold. Then for each $y_0 \in Y$ and $u \in L^2(0,T)$ there exists $t_0 \in (0,T]$, such that \eqref{eq:abs_bil_cau} has a unique mild solution $y \in C([0,t_0];Y)$.
\end{theorem}

In the following, we use standard Gronwall type estimates for $y$ to show that the maximum interval of existence is unbounded.

\begin{theorem}[Global mild solution]\label{thm:global_mild_solution} Let Assumptions \eqref{ass:A1} and \eqref{ass:A2} hold. Then  \eqref{eq:abs_bil_cau} has a unique mild solution in $C([0,T];Y)$ for every $T>0$.
\end{theorem}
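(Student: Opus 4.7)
The plan is to obtain existence and uniqueness on $[0,T]$ in one stroke via a fixed-point argument on the entire interval, bypassing any iterative extension of the local solution. The decisive ingredient is the \emph{infinite-time} admissibility of $N$ for $e^{At}$ established in Proposition \ref{prop:mild_solution_lin}: it provides a constant $M$, independent of $T$, with $\|\Phi_t w\|_Y \le M \|w\|_{L^2(0,t;Y)}$ for all $t \ge 0$, and in combination with \cite[Proposition 4.2.5]{TucW09} lifts the map $w \mapsto \Phi_\cdot w$ to a bounded operator $L^2(0,T;Y) \to C([0,T];Y)$.

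Set $X := C([0,T];Y)$ and consider the mapping $(\mathcal{T}_u y)(t) := e^{At} y_0 + \Phi_t(uy)$. For $y \in X$ and $u \in L^2(0,T)$ we have $uy \in L^2(0,T;Y)$, so by the preceding continuity statement $\mathcal{T}_u$ sends $X$ into itself. Endow $X$ with the Bielecki-type weighted norm
\begin{equation*}
\|y\|_\lambda := \sup_{t \in [0,T]} e^{-\lambda U(t)} \|y(t)\|_Y, \qquad U(t) := \int_0^t |u(s)|^2 \, \mathrm{d}s,
\end{equation*}
which is equivalent to the standard sup norm for every $\lambda > 0$. For $y_1,y_2 \in X$, infinite-time admissibility together with the identity $\int_0^t |u(s)|^2 e^{2\lambda U(s)}\,\mathrm{d}s = \tfrac{1}{2\lambda}(e^{2\lambda U(t)}-1)$ yields
\begin{equation*}
\|(\mathcal{T}_u y_1 - \mathcal{T}_u y_2)(t)\|_Y^2 \le M^2 \int_0^t |u(s)|^2 \|y_1(s)-y_2(s)\|_Y^2 \, \mathrm{d}s \le \frac{M^2}{2\lambda}\, e^{2\lambda U(t)} \|y_1 - y_2\|_\lambda^2,
\end{equation*}
whence $\|\mathcal{T}_u y_1 - \mathcal{T}_u y_2\|_\lambda \le \frac{M}{\sqrt{2\lambda}}\,\|y_1 - y_2\|_\lambda$. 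Choosing $\lambda > M^2/2$ makes $\mathcal{T}_u$ a strict contraction on $X$, and Banach's fixed-point theorem produces the unique $y \in C([0,T];Y)$ satisfying \eqref{eq:mild_solution_bilinear} on all of $[0,T]$.

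The main bookkeeping point is that $\mathcal{T}_u$ actually returns continuous, and not merely $L^\infty$, $Y$-valued functions; this is precisely where the continuous lifting of the controllability map afforded by infinite-time admissibility is used. Compatibility with the local mild solution of Theorem \ref{eq:local_mild_solution} then follows automatically from uniqueness of the fixed point, since the restriction of the global $y$ to any subinterval $[0,t_0]$ is itself a mild solution and must therefore coincide with the local one. I do not anticipate any serious obstacle beyond carefully invoking \cite[Remark 4.6.2]{TucW09} and \cite[Proposition 4.2.5]{TucW09} for the uniform-in-$T$ admissibility constant and the $C([0,T];Y)$-regularity of $\Phi_\cdot(uy)$, respectively.
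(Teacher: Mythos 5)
Your proof is correct, and it takes a genuinely different route from the paper. The paper follows the continuation scheme of \cite{BalMS04,HosJS18}: start from the local solution of Theorem \ref{eq:local_mild_solution}, derive an a priori bound of the form $\|y(t)\|_Y \le \tilde{c}\,e^{M\|u\|_{L^2(0,t)}}$ by duality and a Gronwall inequality, and conclude that the maximal interval of existence is all of $[0,T]$. You instead run a single fixed-point argument on the whole interval, with the Bielecki weight $e^{-\lambda U(t)}$, $U(t)=\int_0^t |u|^2$, absorbing the $L^2$ mass of the control so that the contraction constant $M/\sqrt{2\lambda}$ becomes $T$-independent after choosing $\lambda>M^2/2$. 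Both arguments hinge on the same two facts the paper establishes earlier: the $t$-uniform admissibility bound $\|\Phi_t w\|_Y\le M\|w\|_{L^2(0,t;Y)}$ from Proposition \ref{prop:mild_solution_lin}, and the $C([0,T];Y)$-regularity of $t\mapsto\Phi_t w$ via \cite[Proposition 4.2.5]{TucW09}. The trade-off is as follows: the paper's continuation argument produces an explicit growth estimate on $\|y(t)\|_Y$ in terms of $\|u\|_{L^2}$ which is reused verbatim in the proof of Theorem \ref{theo:existopt} to bound the minimizing sequence; your argument is structurally cleaner, dispenses with the separate local-existence step, and delivers existence and uniqueness simultaneously (with agreement with Theorem \ref{eq:local_mild_solution} automatic by uniqueness of the fixed point), but you would still need to extract the a priori $Y$-bound by a short auxiliary computation if you wanted to reuse it later. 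Both are valid proofs.
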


From \eqref{eq:mild_solution_bilinear} and Theorem \ref{thm:global_mild_solution}, the function  $y$ is also the unique mild solution to the equation
\begin{align*}
 \ddt y(t)&= Ay(t) + N w(t), \ \ y(0)=y_0,
\end{align*}
where $w(\cdot):=y(\cdot)u(\cdot) \in L^2(0,T;Y)$. Thus $y$ satisfies
\begin{align*}
 y(t) = e^{At} y_0 + \int_0^t e^{A(t-\sigma)} u(\sigma) Ny(\sigma) \, \mathrm{d}\sigma,
\end{align*}
with $u\in L^2(0,T)$, and $A=\overline{\mathcal{L}^*}$. Thus Proposition \ref{prop:lin_more_reg} implies that $y$ also satisfies
\begin{align}\label{eq:int_form3}
\langle y(t)-y_0, z \rangle_{\mathcal{D}(A^\dagger)', \mathcal{D}(A^\dagger)  }  = \int_0^t [\langle y(\sigma) , A^\dagger z \rangle_{Y} + u(\sigma)\langle Ny(\sigma), z\rangle_{\mathcal{D}(A^\dagger)', \mathcal{D}(A^\dagger)  }]   \, \mathrm{d}\sigma,
\end{align}
for all $z\in \mathcal{D}(A^\dagger)$, where $A=(A^\dagger)' =\overline{\mathcal{L}}'$.

\section{Decoupling of the invariant measure}\label{s4}

Obviously, due to the dissipativity of $\mathcal{L}^*$, the eigenvalues of the operator ${A}=\overline{\mathcal{L}^*}$ are in the closed left half complex plane. Moreover, $0$ is an eigenvalue of ${A}$ with its associated eigenspace containing constant functions. To show that the constant functions are the only functions in the kernel and that convergence to equilibrium is exponential, we need the following conditions for the potential $G$.

\begin{customthm}{A3}\label{ass:A3}
 The regularity $G\in C^2(\mathbb{R}^d)$ holds.   Moreover the Hessian satisfies   $|\nabla^2 G(x)| \le \rho(1+|\nabla G(x)|)$ for some $\rho$ independent of $x \in  \mathbb{R}^d$ or $\nabla^2 G\in L^\infty(\mathbb R^{d\times d})$,   and a Poincar\'{e} inequality holds for the measure $$\nu (\mathrm{d}x) = Z_\nu^{-1} e^{-G(x)} \,\mathrm{d}x, \quad Z_\nu = \int_{\mathbb R^d} e^{-G(x)} \,\mathrm{d}x.  $$
\end{customthm}
Here, the measure $\nu$ is said to satisfy a Poincar\'{e} inequality with a constant $r>0$ if
\begin{align*}
 \| \varphi \| _{L^2(\nu)}^2 \le \tfrac{1}{2r} \| \nabla \varphi \| ^2 _{L^2(\nu)}, \quad \text{for all } \varphi \in H^1(\nu) \cap L^2_0(\nu),
\end{align*}
where
\begin{align*}
  L_0^2(\nu) &= \left \{ \varphi \in L^2(\nu) \ | \ \int _{\mathbb R^d} \varphi\, \mathrm{d}\nu = 0 \right\} , \
  H^1(\nu) = \left \{ \varphi \in L^2(\nu) \ | \ \nabla \varphi \in (L^2(\nu))^d \right\}.
\end{align*}
For a discussion of sufficient conditions for the validity of the Poincar\'{e} inequality see, e.g.,  \cite[Section 2.2]{LelS16}.

\begin{proposition}\label{prop:conv_to_equi}
Under Assumptions \eqref{ass:A1} and \eqref{ass:A3}, there exist constants $C$ and $\bar{\kappa}>0$ such that for all $\varphi \in L_0^2(\mu)$
\begin{align}\label{eq:exp_decay}
 \| e^{t A} \varphi \|_Y \le Ce^{-\bar{\kappa}t} \| \varphi\|_Y \quad \text{for all } t\ge 0.
\end{align}
\end{proposition}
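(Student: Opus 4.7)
The plan is to combine an invariance argument with a standard $L^2$-hypocoercivity estimate; Assumption \ref{ass:A3} is calibrated so that the classical results of \cite{Vil09} or \cite{DolMS15} apply, and the proof essentially amounts to verifying their hypotheses in the present framework.

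First I would show that $L^2_0(\mu)$ is invariant under $e^{tA}$. The constant function $1$ lies in $\ker A$ and, by Proposition \ref{prop:adjoint_relation}, in $\ker A^\dagger$. Hence for $\varphi\in L^2_0(\mu)$,
\begin{equation*}
\ddt \langle e^{tA}\varphi,1\rangle_Y = \langle e^{tA}\varphi,A^\dagger 1\rangle_Y = 0,
\end{equation*}
so $e^{tA}\varphi$ remains in $L^2_0(\mu)$, and it suffices to prove \eqref{eq:exp_decay} there. On $L^2_0(\mu)$ I would introduce the Dolbeault--Mouhot--Schmeiser modified norm
\begin{equation*}
\langle\!\langle \varphi,\varphi\rangle\!\rangle_\varepsilon := \|\varphi\|_Y^2 + 2\varepsilon\, \mathrm{Re}\,\langle B\varphi,\varphi\rangle_Y, \qquad B := (I + (T\Pi)^*(T\Pi))^{-1}(T\Pi)^*,
\end{equation*}
where $T = -v^\top\nabla_x + \nabla_x G^\top \nabla_v$ is the skew-symmetric transport part of $A$ and $\Pi$ is the $L^2_\mu$-orthogonal projection onto the subspace of $v$-independent functions. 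For $\varepsilon > 0$ sufficiently small, $\langle\!\langle\cdot,\cdot\rangle\!\rangle_\varepsilon$ is equivalent to $\|\cdot\|_Y^2$, and the key inequality
\begin{equation*}
\ddt \langle\!\langle e^{tA}\varphi, e^{tA}\varphi\rangle\!\rangle_\varepsilon \le -2\tilde\kappa \,\langle\!\langle e^{tA}\varphi, e^{tA}\varphi\rangle\!\rangle_\varepsilon,
\end{equation*}
combined with Gronwall's lemma and norm equivalence, yields \eqref{eq:exp_decay}.

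The principal obstacle is the commutator computation behind the last differential inequality. Lemma \ref{lem:domains} provides the ``microscopic'' dissipation $-\|\nabla_v \varphi\|_Y^2$; the Poincar\'e inequality of Assumption \ref{ass:A3} applied to the $v$-average $\Pi\varphi$ furnishes ``macroscopic'' coercivity through the operator $T\Pi$; and the cross terms generated by differentiating $\langle B\varphi,\varphi\rangle_Y$ reduce to the commutator $[T,\nabla_v]$, which is controlled by the Hessian growth bound $|\nabla^2 G|\le \rho(1+|\nabla G|)$ (or its $L^\infty$ alternative) in Assumption \ref{ass:A3}, exactly as in \cite[Theorem 3]{DolMS15}. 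Choosing $\varepsilon$ small enough then yields $\tilde\kappa > 0$ and hence the exponential decay with constants $C,\bar\kappa$. The formal manipulations are first carried out on $C_0^\infty(\R^{2d})$ and extended to $\mathcal D(A)\cap L^2_0(\mu)$ by density (Lemma \ref{le:density} and the cores established in Section \ref{s2}).
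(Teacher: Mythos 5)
Your proposal is correct, but it takes a genuinely different route from the paper. The paper's proof is a one-liner: it invokes \cite[Proposition 2.20]{LelS16} or \cite{GroS16} for the decay of $e^{tA^\dagger}\varphi = e^{t\overline{\mathcal{L}}}\varphi$ on $L^2_0(\mu)$, and then transfers the estimate to $e^{tA} = e^{t\overline{\mathcal{L}^*}}$ by a duality argument ($(e^{At})^\dagger = e^{A^\dagger t}$ together with equality of operator norms on the invariant subspace $L^2_0(\mu)$). You instead unpack the cited result and reprove it from first principles via the Dolbeault--Mouhot--Schmeiser modified entropy method, working directly with $A=\overline{\mathcal{L}^*}$ so that no adjoint-transfer step is needed. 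Your identification of the three ingredients is accurate: microscopic coercivity from Lemma \ref{lem:domains}, macroscopic coercivity from the Poincar\'e inequality for $\nu$ in Assumption \ref{ass:A3}, and boundedness of the auxiliary operator through the Hessian growth bound. The invariance computation $\ddt\langle e^{tA}\varphi,\mathbbm{1}\rangle_Y = \langle e^{tA}\varphi, A^\dagger\mathbbm{1}\rangle_Y = 0$ is also the right way to justify restricting to $L^2_0(\mu)$.

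What each buys: the paper's proof is short and delegates the hypocoercivity machinery to the literature; yours is self-contained and makes visible exactly where each clause of Assumption \ref{ass:A3} is used, at the cost of a much longer argument that you only sketch (the $\mathrm{(H1)}$--$\mathrm{(H4)}$-type verifications, the bounds on $B T(I-\Pi)$ and the commutator, the equivalence of norms for small $\varepsilon$). A side benefit of your direct route is that it avoids the slightly imprecise wording in the paper's transfer step --- the displayed equality $\|e^{tA}\varphi\|_Y = \|(e^{tA})^\dagger\varphi\|_Y$ is not literally true pointwise in $\varphi$, and should be read as equality of the restricted operator norms $\|e^{tA}|_{L^2_0(\mu)}\| = \|e^{tA^\dagger}|_{L^2_0(\mu)}\|$; you sidestep this issue entirely. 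One small remark: to be complete you would also want to note, as you do implicitly, that the eigenvalue $0$ is simple (so that on $L^2_0(\mu)$ there is a genuine spectral gap), which in your framework follows from macroscopic coercivity.
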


\begin{proof}
 With Assumption \eqref{ass:A3} holding, it follows  from \cite[Proposition 2.20]{LelS16} or \cite{GroS16}, that  for all $\varphi \in L_0^2(\mu)$
 \begin{align*}
  \| e^{t A^\dagger} \varphi \|_Y \le Ce^{-\bar{\kappa}t} \| \varphi\|_Y \quad \text{for all } t\ge 0
 \end{align*}
 where we have used the relation $\bar{\mathcal{L}}=(\overline{\mathcal{L}^*})^\dagger$, see Proposition \ref{prop:adjoint_relation}. Since $(e^{At})^\dagger=e^{A^\dagger t}$ this  implies that
 \begin{align*}
  \| e^{t A} \varphi \|_Y = \| (e^{t A})^\dagger  \varphi \|_Y = \| e^{t A^\dagger} \varphi \| _Y \le Ce^{-\bar{\kappa}t} \| \varphi\|_Y  \quad \text{for all } t\ge 0.
 \end{align*}
\end{proof}

\begin{remark}
The dependence of $\bar \kappa$ and $C$ in \eqref{eq:exp_decay} on $\beta$ and $\gamma$ in \eqref{eq:langevin}, has been addressed in many publications including \cite{ArnE14,GroS16,LelS16,Vil09}. The estimate in \cite{GroS16} shows that $\bar \kappa\to 0^+$ as $\gamma\to 0^+$ or $C\to 1^+$.
\end{remark}

As an immediate consequence of this proposition, the eigenspace associated to the eigenvalue $0$ is one-dimensional. It is spanned by the constant function  $\mathbbm{1}$.  The associated eigenspace for the operator $\cL^\sharp$ is spanned by $\mu  \mathbbm{1}$.   We shall show that the control does not affect the dynamics of \eqref{eq:abs_bil_cau} on $span \mathbbm{1}$.  This suggests a splitting of the state space
according to the measure and its complement for which we introduce the (orthogonal) projection operators
\begin{equation}\label{eq:proj_op}
 \begin{aligned}
  P&\colon Y\to Y_0 = L_0^2(\mu), \ \ y\mapsto Py=y-\int_{\mathbb R^{2d}} y\mu \,\mathrm{d}x\,\mathrm{d}v \ \mathbbm{1},\\
  \mathcal{P}&\colon \mathcal{D}(A^\dagger)\to \mathcal{D}(A^\dagger)\cap Y_0, \ \ y\mapsto \mathcal{P}y=P_{|\mathcal{D}(A^\dagger)}y,\\
  Q&\colon Y\to Y_0^{\perp}, \ \ y\mapsto Qy=(I-P)y=\int_{\mathbb R^{2d}} y\mu \,\mathrm{d}x\,\mathrm{d}v \ \mathbbm{1},\\
  \mathcal{Q}&\colon \mathcal{D}(A^\dagger)\to Y_0^{\perp},\ \ y\mapsto \mathcal{Q}y=Q_{|\mathcal{D}(A^\dagger)}y.
 \end{aligned}
\end{equation}
With these projections, we may replace \eqref{eq:int_form3} as stated in the following proposition.
\begin{proposition}\label{eq:decoupled_sol}Let Assumptions \eqref{ass:A1}-\eqref{ass:A3} hold.
Then the function $y$ is a solution to \eqref{eq:abs_bil_cau} in the sense of equation \eqref{eq:int_form3} if and only if $y=Py+Qy=y_1+y_2$, where for all $z\in \mathcal{D}(A^\dagger)\cap Y_0$:
  \begin{subequations}\label{eq:split_total}
\begin{alignat}{4}
 \langle y_1(t)-Py_0,z \rangle_Y &= \int_0^t \langle y_1(\sigma) ,A^\dagger z \rangle _Y + u(\sigma) \langle y_1(\sigma)+Qy_0,N'z \rangle_Y \,\mathrm{d}\sigma,\label{eq:Psub} \\
  y_2(t)&=Qy_0 \quad  \forall t\ge 0.\label{eq:Qsub}
  \intertext{In particular, we have that}
   \int_{\mathbb R^{2d}} y(t) \, \mathrm{d}\mu &= \int_{\mathbb R^{2d}} y_0\, \mathrm{d}\mu\quad \forall t\ge 0.\label{eq:Qsub2}
\end{alignat}
\end{subequations}
\end{proposition}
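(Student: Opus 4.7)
The splitting $y = Py + Qy = y_1 + y_2$ is immediate from $P+Q=\mathrm{id}$, so the heart of the argument is the mass conservation identity \eqref{eq:Qsub2} (which is equivalent to $y_2(t)=Qy_0$) and the projection of \eqref{eq:int_form3} onto $Y_0$ to obtain \eqref{eq:Psub}. My plan proceeds in two main steps plus a converse.

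First, I would verify that $\mathbbm{1}\in \mathcal{D}(A)\cap \mathcal{D}(A^\dagger)$ with $A\mathbbm{1}=A^\dagger\mathbbm{1}=0$ (this is also what makes $\mathcal{P},\mathcal{Q}$ well-defined). This follows from a cutoff approximation: choosing $\chi_n(x,v)=\eta(|x|/n)\eta(|v|/n)$ with $\eta\in C_0^\infty(\R)$, $\eta(0)=1$, $|\eta|\le 1$, one checks $\chi_n\to \mathbbm{1}$ in $Y$ (dominated convergence, using that $\mu$ is a probability measure) and $\mathcal{L}\chi_n,\mathcal{L}^*\chi_n\to 0$ in $Y$. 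The latter rests on $|\nabla\chi_n|=O(1/n)$ combined with the moment bounds $\int|v|^2\,\mathrm{d}\mu<\infty$ (immediate from the Gaussian factor in $\mu$) and $\int|\nabla_xG|^2\,\mathrm{d}\mu<\infty$ (a consequence of Assumption \eqref{ass:A3}, via the integration-by-parts identity $\int|\nabla G|^2 e^{-G}\,\mathrm{d}x = \int\Delta G\,e^{-G}\,\mathrm{d}x$ and the Hessian bound $|\nabla^2G|\le\rho(1+|\nabla G|)$); the pure Laplacian-in-$v$ contribution is even $O(1/n^2)$ times a constant.

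Having $\mathbbm{1}\in\mathcal{D}(A^\dagger)$, I would then test \eqref{eq:int_form3} with $z=\mathbbm{1}$. The drift term on the right vanishes since $A^\dagger\mathbbm{1}=0$, and the bilinear term vanishes because $N'\mathbbm{1}=-(\nabla_x\alpha)^\top\nabla_v\mathbbm{1}=0$. Hence $\langle y(t)-y_0,\mathbbm{1}\rangle_Y=0$, which is \eqref{eq:Qsub2} and, via $Qy(t)=(\int y(t)\,\mathrm{d}\mu)\mathbbm{1}$, also \eqref{eq:Qsub}. For \eqref{eq:Psub}, I would plug the decomposition $y(\sigma)=y_1(\sigma)+Qy_0$ into \eqref{eq:int_form3} and test with $z\in \mathcal{D}(A^\dagger)\cap Y_0$. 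The LHS reduces to $\langle y_1(t)-Py_0,z\rangle_Y$ by orthogonality of $\mathrm{span}\,\mathbbm{1}$ and $Y_0$. On the RHS, $\langle Qy_0,A^\dagger z\rangle_Y=(\int y_0\,\mathrm{d}\mu)\,\langle A\mathbbm{1},z\rangle_Y=0$, leaving $\langle y_1(\sigma),A^\dagger z\rangle_Y$; the control term rewrites as $\langle Ny(\sigma),z\rangle_{\mathcal{D}',\mathcal{D}}=\langle y(\sigma),N'z\rangle_Y=\langle y_1(\sigma)+Qy_0,N'z\rangle_Y$. This is \eqref{eq:Psub}. The converse direction follows by splitting an arbitrary $z\in \mathcal{D}(A^\dagger)$ as $z=\mathcal{P}z+c_z\mathbbm{1}$ with $c_z=\int z\,\mathrm{d}\mu$: the $\mathcal{P}z$-piece is handled by \eqref{eq:Psub} and the $c_z\mathbbm{1}$-piece reduces to the trivial $0=0$, invoking $A^\dagger\mathbbm{1}=0$, $N'\mathbbm{1}=0$, and \eqref{eq:Qsub}.

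The main obstacle is the technical approximation that places $\mathbbm{1}$ in the graph domain of both $\overline{\mathcal{L}}$ and $\overline{\mathcal{L}^*}$: once the moment bound $\int|\nabla_x G|^2\,\mathrm{d}\mu<\infty$ is in hand, the $Y$-convergence $\mathcal{L}\chi_n,\mathcal{L}^*\chi_n\to 0$ is straightforward, and everything else is routine bookkeeping through the orthogonal splitting $Y=Y_0\oplus\mathrm{span}\,\mathbbm{1}$.
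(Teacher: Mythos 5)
Your proposal is correct and follows essentially the same route as the paper's proof: decompose via the orthogonal splitting $Y=Y_0\oplus\mathrm{span}\,\mathbbm{1}$, test \eqref{eq:int_form3} against $Y_0^\perp$ (you take $z=\mathbbm{1}$, the paper takes general $z\in Y_0^\perp$) to obtain \eqref{eq:Qsub}--\eqref{eq:Qsub2}, and against $z\in\mathcal{D}(A^\dagger)\cap Y_0$ to obtain \eqref{eq:Psub}, all hinging on $A^\dagger\mathbbm{1}=0$ and $N'\mathbbm{1}=0$. The one genuine addition in your write-up is the explicit cutoff-plus-moment-bound argument that places $\mathbbm{1}$ in $\mathcal{D}(A)\cap\mathcal{D}(A^\dagger)$; the paper uses this implicitly (it is needed for $\mathcal{P},\mathcal{Q}$ in \eqref{eq:proj_op} to be well-defined and for \eqref{eq:aux8}) but does not prove it, simply asserting before Proposition \ref{prop:conv_to_equi} that the eigenspace at $0$ contains constants.
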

\begin{proof}
 Let us assume that \eqref{eq:int_form3} holds for all $z \in \mathcal{D}(A^\dagger)$.  With regard to the integral representation
\eqref{eq:int_form3} of the solution, we split $y(t)\in Y$ and $z\in \mathcal{D}(A^\dagger)$ according to $y(t)=Py(t)+Qy(t)$ and $z=\mathcal{P}z+\mathcal{Q}z$. We then obtain
\begin{equation}\label{eq:aux7}
\begin{aligned}
 &\langle P(y(t)-y_0)+Q(y(t)-y_0),\mathcal{P}z+\mathcal{Q}z\rangle_{\mathcal{D}} =\langle y(t)-y_0,z\rangle_{\mathcal{D}}\\
 &=\int_0^t \langle y(\sigma),A^\dagger z \rangle_Y + u(\sigma) \langle Ny(\sigma),z\rangle _{\mathcal{D}} \, \mathrm{d}\sigma =\int_0^t \langle y(\sigma),A^\dagger z \rangle_Y + u(\sigma) \langle y(\sigma),N'z\rangle _Y\, \mathrm{d}\sigma \\
 &=\int_0^t \langle Py(\sigma)\!+\!Qy(\sigma),A^\dagger (\mathcal{P}z\!+\!\mathcal{Q}z) \rangle_Y \! +\! u(\sigma) \langle Py(\sigma)\!+\!Qy(\sigma),N'(\mathcal{P}z\!+\!\mathcal{Q}z)\rangle _Y\, \mathrm{d}\sigma.
\end{aligned}
\end{equation}
Since $\mathcal{Q}z \in Y_0^\perp$ is a constant function in $\mathbb R^{2d}$ we obtain
\begin{align}\label{eq:aux8}
 \langle \tilde{z}, A^\dagger \mathcal{Q}z\rangle_Y = 0 = \langle \tilde{z}, N' \mathcal{Q}z\rangle_Y \quad \forall \tilde{z} \in Y.
\end{align}
Moreover, as $P$ and $Q$, respectively $\mathcal{P}$ and $\mathcal{Q}$ are complementary projections, for all $\tilde{z}\in Y$ and $z\in\mathcal{D}(A^\dagger)$, it  holds that
\begin{align*}
& \langle P\tilde{z}+Q\tilde{z}, \mathcal{P}z + \mathcal{Q}z \rangle_{\mathcal{D}}=
 \langle P\tilde{z}+Q\tilde{z}, \mathcal{P}z + \mathcal{Q}z \rangle_Y \\
 &=\langle \tilde{z},P \mathcal{P}z \rangle_Y + \langle \tilde{z},P \mathcal{Q}z \rangle_Y + \langle \tilde{z},Q \mathcal{P}z \rangle_Y +\langle \tilde{z},Q \mathcal{Q}z \rangle_Y = \langle P\tilde{z},\mathcal{P}z \rangle_Y \!+\! \langle Q\tilde{z} , \mathcal{Q}z \rangle _Y.
\end{align*}
Returning to \eqref{eq:aux7}, we therefore conclude that
\begin{align*}
 &\langle P(y(t)-y_0),\mathcal{P}z \rangle_Y+ \langle Q(y(t)-y_0),\mathcal{Q}z \rangle_Y \\
 &= \int_0^t \langle Py(\sigma)+Qy(\sigma), A^\dagger \mathcal{P}z \rangle_Y + u(\sigma) \langle Py(\sigma)+Qy(\sigma),N'\mathcal{P}z \rangle _Y\, \mathrm{d}\sigma.
\end{align*}
Let us now consider two specific cases for $z\in \mathcal{D}(A^\dagger)$. \\[1ex]
\emph{Case 1:} $z\in Y_0^\perp$. Here, since $\mathcal{P}z=0$ and $\mathcal{Q}z=z$, it follows that
\begin{align*}
 \langle y(t)-y_0,z \rangle_Y = 0 \quad \forall z \in Y_0^\perp.
\end{align*}
This particularly yields
\begin{align*}
 Q(y(t)-y_0)=\int_{\mathbb R^{2d}} (y(t)-y_0)\mu \, \mathrm{d}x\,\mathrm{d}v \ \mathbbm{1} = \langle y(t)-y_0,\mathbbm{1} \rangle_Y \, \mathbbm{1}=0
\end{align*}
or, equivalently, $Qy(t) = Qy_0$ for all $t\ge 0$ which yields \eqref{eq:Qsub}. \\[1ex]
\emph{Case 2:} $z\in \mathcal{D}(A^\dagger)\cap Y_0$. Here, with $\mathcal{P}z=z$ and $\mathcal{Q}z=0$, it follows that
\begin{align*}
 \langle y(t)-y_0,z \rangle_Y = \int_0^t \langle Py(\sigma)\! + \!Qy(\sigma),A^\dagger z \rangle _Y + u(\sigma) \langle Py(\sigma)\!+\!Qy(\sigma),N'z \rangle_Y \,\mathrm{d}\sigma.
\end{align*}
Note that $Qy(\sigma)\in Y_0^\perp \subset \mathcal{D}(A)$  and $AQy(\sigma)=0$ for all $\sigma \in [0,t]$ which yields
\begin{align*}
 \langle  P y(t)- P y_0,z \rangle_Y = \int_0^t \langle Py(\sigma) ,A^\dagger z \rangle _Y + u(\sigma) \langle Py(\sigma)+Qy(\sigma),N'z \rangle_Y \,\mathrm{d}\sigma,
\end{align*}
and thus \eqref{eq:Psub} holds.  Note that we may also interpret this as an abstract equation in $[\mathcal{D}(A^\dagger)\cap Y_0]'$: 
\begin{align*}
 Py(t) = Py_0 + \int_0^t APy(\sigma) +u(\sigma)NPy(\sigma) + u(\sigma)NQy(\sigma) \, \mathrm{d}\sigma.
\end{align*}
 It can be verified by analogous arguments that \eqref{eq:Psub} and \eqref{eq:Qsub} imply \eqref{eq:int_form3}.
\end{proof}

\begin{remark}\label{rem:op_eq}
Note that the operator formulation of \eqref{eq:Psub} is given as an equation in $[\mathcal{D}(A^\dagger)\cap Y_0 ]'$ by
\begin{align}\label{eq:op_eq}
  \ddt{\widehat{y}} &= \widehat{A}\widehat{y} + u\widehat{N}\widehat{y}+ uNQy_0 , \quad \widehat{y}(0)=Py_0,
\end{align}
where $\widehat{A}= A_{|Y_0}, \widehat{N}= N_{|Y_0}$.
\end{remark}

Henceforth an additional assumption on the potential $G$ will be required \cite{HerN04}, where  we use the notation $\langle x \rangle = \sqrt{1+\|x\|^2}$.

\begin{customthm}{A4}\label{ass:A4}
(a)  The potential $G$ is a $C^\infty(\mathbb{R}^d)$ function, and there exists  $n> \frac{1}{2}$, and for each multi-index $\mathbbm{j}\in \mathbb{R}^d$   a positive constant $C_{\mathbbm{j}}$,  such that
\begin{equation*}
|\partial^\mathbbm{j}_x G(x)| \le C_\mathbbm{j}(1+\langle x\rangle^{2n-\min\{\|\mathbbm{j}\|,2\}}), \;\forall x\in \mathbb{R}^d.
\end{equation*}
(b) There exist constants $C_0>0$ and $C_1>0$ such that
\begin{equation*}
\pm G(x) \ge C_0^{-1} \langle x\rangle^{2n} - C_0  \text{ and } |\partial_x G(x)|\ge C_1^{-1} \langle x\rangle^{2n-1} - C_1.
\end{equation*}
\end{customthm}

\begin{proposition}\label{prop:sg_compact}
Let Assumptions \eqref{ass:A3}-\eqref{ass:A4} hold.  Then the semigroup $e^{\widehat{A}t}$ generated by $\widehat{A}=A_{|Y_0}$ is compact.
\end{proposition}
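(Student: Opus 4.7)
The plan is to transfer the compactness result, already known in the standard $L^2$ setting for the Kramers-Fokker-Planck operator $\mathcal{L}^\sharp_{\mathrm{K}}$, to our weighted setting via the isomorphism $M$, and then to restrict to the invariant subspace $Y_0$.

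First I would invoke the similarity relation $\mathcal{L}^* = M^{-1}\mathcal{L}^\sharp_{\mathrm{K}} M$ from \eqref{eq:aux9}. Since $M \in \mathcal{B}(Y,L^2(\R^{2d}))$ is an isomorphism, this similarity extends to the closures and gives the semigroup identity
\begin{equation*}
e^{\overline{\mathcal{L}^*}t} = M^{-1}\, e^{\overline{\mathcal{L}^\sharp_{\mathrm{K}}}t}\, M \quad \text{on } Y.
\end{equation*}
Consequently, compactness of $e^{\widehat A t}$ will follow from compactness of $e^{\overline{\mathcal{L}^\sharp_{\mathrm{K}}}t}$ on $L^2(\R^{2d})$ and the invariance of $Y_0$ under $e^{At}$.

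Next I would cite the work of Hérau-Nier \cite{HerN04} and Helffer-Nier \cite{HelN05}, in which Assumption \eqref{ass:A4} is precisely the set of conditions placed on the confinement potential so that $\overline{\mathcal{L}^\sharp_{\mathrm{K}}}$ has compact resolvent on $L^2(\R^{2d})$. Combined with the fact that this operator generates a strongly continuous semigroup (Proposition \ref{prop:adjoint_relation}) that is in addition regularizing from $L^2$ into a weighted Sobolev space compactly embedded in $L^2$, one obtains compactness of $e^{\overline{\mathcal{L}^\sharp_{\mathrm{K}}}t}$ for every $t>0$. Conjugating by $M$ as above immediately yields compactness of $e^{At}$ on $Y$.

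It remains to restrict to $Y_0$. The subspace $Y_0^\perp = \mathrm{span}\{\mathbbm 1\}$ is invariant under $e^{At}$ since $A\mathbbm 1 = 0$. For the complementary invariance, I would argue using the adjoint: given $\varphi \in Y_0$, compute
\begin{equation*}
\int_{\R^{2d}} e^{At}\varphi \, \mu\dxdv = \langle e^{At}\varphi,\mathbbm 1\rangle_Y = \langle \varphi, e^{A^\dagger t}\mathbbm 1\rangle_Y = \langle \varphi,\mathbbm 1\rangle_Y = 0,
\end{equation*}
where we used Proposition \ref{prop:adjoint_relation} (so $A^\dagger = \overline{\mathcal{L}}$) together with $\overline{\mathcal{L}}\mathbbm 1 = 0$ to conclude $e^{A^\dagger t}\mathbbm 1 = \mathbbm 1$. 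Hence $e^{At}Y_0\subset Y_0$, so $e^{\widehat At} = e^{At}\big|_{Y_0}$, and the restriction of a compact operator to a closed invariant subspace is compact.

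The main obstacle is the careful matching of Assumption \eqref{ass:A4} with the hypotheses used in \cite{HerN04,HelN05}; these works need the growth and lower-bound conditions on $G$ and its derivatives precisely in order to establish a Rellich-type compact embedding of the domain of $\overline{\mathcal{L}^\sharp_{\mathrm{K}}}$ (equipped with graph norm) into $L^2(\R^{2d})$, via resolvent estimates for the model operator $-\Delta_v + \tfrac{1}{4}\|v\|^2 + v\cdot\nabla_x - \nabla_x G\cdot\nabla_v$. Everything else is bookkeeping with similarity and invariance.
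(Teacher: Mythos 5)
Your proposal is correct and follows essentially the same route as the paper: compactness of $e^{\overline{\mathcal{L}^\sharp_{\mathrm{K}}}t}$ in $L^2(\R^{2d})$ via the smoothing effect and compact embeddings of \cite{HerN04}, transfer to $e^{At}$ on $Y$ by the similarity $A = M^{-1}\overline{\mathcal{L}^\sharp_{\mathrm{K}}}M$, and restriction to the closed invariant subspace $Y_0$. The only (welcome) elaboration beyond the paper is that you spell out why $Y_0$ is invariant via the adjoint identity $e^{A^\dagger t}\mathbbm{1}=\mathbbm{1}$, a step the paper simply asserts.
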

\begin{proof}
From the proof of Proposition  \ref{prop:adjoint_relation} it is known that $\overline{\mathcal{L}^\sharp_{\mathrm{K}}}$ is maximally dissipative and that it generates a semigroup $e^{\overline{\mathcal{L}^\sharp_{\mathrm{K}}}t}$.
As has been shown in \cite[Theorem 4.2]{HerN04}, this semigroup has a smoothing effect which, by the compact embeddings mentioned in the proof of Theorem 3.1 of the same article, is compact in $L^2(\mathbb{R}^{2d})$. Since $A=\mathcal{M}^{-1}\overline{\mathcal{L}^\sharp_{\mathrm{K}}}\mathcal{M}$, the semigroups $e^{At}$ and $e^{\overline{\mathcal{L}^\sharp_{\mathrm{K}}}t}$ are isomorphic with $e^{At}=\mathcal{M}^{-1} e^{\overline{\mathcal{L}^\sharp_{\mathrm{K}}}t} \mathcal{M}$, see, e.g., \cite[Section 5.10]{EngN99}. With \cite[Chapter 3, Theorem 4.8]{Kat80}, it follows that $e^{At}$ is compact in $Y$. Hence, it remains to be shown that $e^{\widehat{A}t}=(e^{At})_{|Y_0}$ is compact. This however follows by utilizing the orthogonal decomposition of the state (Hilbert) space $Y=Y_0\oplus Y_0^{\perp}$, the closedness of $Y_0$ and its invariance under $e^{\widehat{A}t}$.
\end{proof}

We shall utilize the fact that the compactness of $e^{\widehat{A}t}$ implies the \emph{spectrum determined growth assumption} \cite[Chapter IV, Corollary 3.12]{EngN99}, i.e., for all $\varepsilon>0$ there exists $M_\varepsilon>0$ such that
\begin{align*}
 \| e^{\widehat{A}t}\|\le M_\varepsilon e^{(\delta+\varepsilon) t}, \quad \delta := \sup\limits_{\lambda \in \sigma(\widehat{A})}( \mathrm{Re}(\lambda) ).
\end{align*}

Let us then  focus on the following linearization of \eqref{eq:op_eq}:
\begin{align*}
\ddt\widehat{y} &= \widehat{A}\widehat{y} + Bu , \quad \widehat{y}(0)=Py_0,
\end{align*}
where $B= NQy_0 = N \mathbbm{1}=-\nabla_x \alpha^\top v$ is considered as operator in $\mathcal{B}(\mathbb{R},Y_0)$.  We also recall that $\langle B,\mathbbm{1}\rangle_Y = 0$.  Below we shall denote by $B$ the operator in  $\mathcal{B}(\mathbb{R},Y_0)$ as well as the element in $Y_0$.
Further we  assume  that the initial state $y_0$ is normalized in such a way that 
\begin{align*}
 \int_{\mathbb R^{2d}} y_0 \mu \, \mathrm{d}x\, \mathrm{d}v = 1.
\end{align*}
Under Assumptions \eqref{ass:A3}-\eqref{ass:A4} the operator $\overline{\cL^\sharp_K}$ has  compact resolvent, see \cite[Theorem 3.1, pg.170,176]{HerN04}.  Consequently $A=\mathcal{M}^{-1} \overline{\cL^\sharp_K}\mathcal{M}$ has a compact resolvent as well.
Hence its spectrum consists of isolated eigenvalues $\{\lambda_j\}_{j=1}^\infty$,  with finite multiplicities,  see \cite[Theorem III.6.29]{Kat80}. The eigenvalue with largest real part is $0$. Consequently the spectrum of $\widehat A^\dagger$ consists of $\{\bar \lambda_j\}_{j=2}^\infty$ with $\mathrm{Re}( \bar \lambda_j) <0$ for all $j\ge 2$. Denote by $\psi_j$ the associated eigenfunctions.  Let $k$ be an index at which a spectral gap occurs, i.e.
 \begin{align}\label{eq:eigs}
0>\mathrm{Re}(\lambda_2)\ge \dots \ge \mathrm{Re}(\lambda_{k})= \delta > \mathrm{Re}(\lambda_{k +1} ) \ge \mathrm{Re}(\lambda_{k+2}) \dots.
 \end{align}
In case the multiplicity of the second eigenvalue of $A$ is one we can take  $k =2$ and $0>\mathrm{Re}(\lambda_2)=\delta >  \mathrm{Re}(\lambda_{3})$.

 \begin{proposition}\label{prop:hautus}
 Let Assumptions \eqref{ass:A2}-\eqref{ass:A4} hold, let $k$ be as in \eqref{eq:eigs}, and assume that $\alpha$ satisfies
 \begin{equation}\label{eq:hautus}
  \langle B,\psi_j \rangle_{Y_0} \neq 0, \text{ for  all } j\in \{2,\dots, k\}.
  \end{equation}
Then for each $\varsigma>0$ with $\delta-\varsigma > \mathrm{Re}(\lambda_{k +1})$, the pair $(\widehat{A}-\delta I +\varsigma I,B)$ is exponentially stabilizable.
\end{proposition}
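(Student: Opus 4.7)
The plan is to shift the spectrum so the instability is confined to a finite-dimensional Riesz projection block, and then verify controllability of this block via the finite-dimensional Popov--Belevitch--Hautus (PBH) criterion, which will be driven by hypothesis \eqref{eq:hautus}. Set $\widetilde A := \widehat A - \delta I + \mu I$, so that $\sigma(\widetilde A) = \{\lambda_j - \delta + \mu\}_{j\ge 2}$. The standing condition $\delta - \mu > \mathrm{Re}(\lambda_{k+1})$ together with \eqref{eq:eigs} separates this spectrum into the finite unstable cluster $\Sigma_u = \{\lambda_j - \delta + \mu\}_{j=2}^{k}$, each eigenvalue having real part at least $\mu > 0$, and a remaining cluster whose real parts are strictly less than $\mathrm{Re}(\lambda_{k+1}) - \delta + \mu < 0$.

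Next I would exploit the compactness of $e^{\widehat A t}$ (Proposition \ref{prop:sg_compact}), which passes to $e^{\widetilde A t}$ and implies that the eigenvalues in $\Sigma_u$ are isolated with finite total algebraic multiplicity. The associated Riesz projection
\[
\Pi_u = \frac{1}{2\pi i}\oint_\Gamma (\lambda I - \widetilde A)^{-1}\, \mathrm{d}\lambda,
\]
with $\Gamma$ a contour enclosing exactly $\Sigma_u$, is a bounded $\widetilde A$-invariant projection yielding a decomposition $Y_0 = Y_u \oplus Y_s$ with $\dim Y_u < \infty$, $\sigma(\widetilde A|_{Y_u}) = \Sigma_u$, and $\sigma(\widetilde A|_{Y_s}) \subset \{z : \mathrm{Re}\, z < 0\}$. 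The spectrum-determined growth property of eventually compact semigroups then gives exponential stability of $e^{\widetilde A|_{Y_s} t}$, and a standard variation-of-constants argument shows that it suffices to stabilize the finite-dimensional block $(\widetilde A|_{Y_u}, \Pi_u B)$: any feedback $u = K_u \Pi_u y$ that renders $\widetilde A|_{Y_u} + \Pi_u B\, K_u$ Hurwitz produces exponentially decaying forcing on the already exponentially stable $Y_s$-equation, and therefore yields exponential stability of the full closed-loop semigroup.

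Controllability of the pair $(\widetilde A|_{Y_u}, \Pi_u B)$ is equivalent to the PBH condition: for every $\nu \in \Sigma_u$ and every nonzero eigenvector $\phi$ of $(\widetilde A|_{Y_u})^\dagger$ at $\bar\nu$, one has $\langle \Pi_u B, \phi\rangle_{Y_0} \neq 0$. The adjoint Riesz projection $\Pi_u^\dagger$ is the spectral projection of $\widetilde A^\dagger$ onto the conjugated unstable cluster, and such $\phi$ lies in its range; consequently $\langle \Pi_u B, \phi\rangle_{Y_0} = \langle B, \Pi_u^\dagger \phi\rangle_{Y_0} = \langle B, \phi\rangle_{Y_0}$, and $\phi$ is in fact an eigenfunction of $\widehat A^\dagger$ at some $\bar\lambda_j$ with $j \in \{2,\dots,k\}$. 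The PBH condition therefore reduces precisely to $\langle B, \psi_j\rangle_{Y_0} \neq 0$ for $j \in \{2,\dots,k\}$, i.e.\ to hypothesis \eqref{eq:hautus}.

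The step I expect to be the most delicate is the clean identification of the eigenvectors of $(\widetilde A|_{Y_u})^\dagger$ with the enumeration $\{\psi_j\}_{j=2}^k$: one needs that this list exhausts, up to linear combinations, every eigenvector of $\widehat A^\dagger$ associated with the unstable eigenvalues, so that the full geometric multiplicities are covered by \eqref{eq:hautus}. Potential Jordan blocks in $\widetilde A|_{Y_u}$ are otherwise harmless, since the PBH criterion for controllability of a finite-dimensional linear system tests only genuine eigenvectors and is equivalent to controllability independently of Jordan structure.
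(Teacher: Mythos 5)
Your argument is correct and is essentially the paper's approach: the paper proves stabilizability by verifying the kernel condition $\mathrm{ker}(\bar\lambda_j I - \widehat{A}^\dagger)\cap\mathrm{ker}(B')=\{0\}$ for $j\in\{2,\dots,k\}$ and then invokes the infinite-dimensional Hautus test for operators generating compact semigroups (citing Bensoussan et al.), which internally performs exactly the Riesz-projection splitting, finite-dimensional PBH check, and variation-of-constants stability transfer that you carry out by hand. The only point worth flagging is that both your argument and the paper's implicitly assume each unstable eigenvalue $\lambda_j$, $j=2,\dots,k$, has geometric multiplicity one (a single eigenvector $\psi_j$ up to scaling); this is in fact unavoidable for stabilizability with a scalar input $B$, since a rank-one $B'$ cannot have trivial kernel intersection with an eigenspace of dimension two or more, so the assumption is harmless but could be made explicit.
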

\begin{proof}
 Since $(-\delta + \varsigma)I$ is a bounded perturbation of $\hat A$ the semigroup generated by $\hat A -\delta I + \varsigma I$ is compact as well, see \cite[Chapter 3, Proposition 1.4]{Paz83}, and  the infinite dimensional Hautus test is applicable for this operator, see \cite[Proposition V.1.3.3. and Remark V.1.3.5]{Benetal07}.

The spectral values which lie in the right half plane are given by $ \sigma^+ = \{\bar \lambda_j - (\delta-\varsigma)\}^{k }_{j=1}$. Thus it suffices to argue that $\mathrm{ker}(\lambda I - (\widehat{A}-(\delta  -\varsigma) I)^\dagger) \cap \mathrm{ker}(B') =\{0\}$ for each $\lambda \in \sigma^+$, or equivalently $\mathrm{ker}(\bar \lambda_j I - \widehat{A}^\dagger) \cap \mathrm{ker}(B') =\{0\} $ for each $j\in \{2,\dots, k\}$.
Note that ${B'}\in \mathcal{B}(Y_0,\mathbb{R})$ can be expressed by $  \langle B,\psi \rangle_{Y_0}$ for $\psi \in Y_0$.
Consequently if  some nontrivial  $\psi\in  \mathrm{ker}(\bar \lambda_j I - \widehat{A}^\dagger) \cap \mathrm{ker}(B')$, with $j\in \{2,\dots, k \}$,  then $\psi= \psi_j$  and $\langle B,\psi_j \rangle_{Y_0}= 0$. This contradicts \eqref{eq:hautus}, and thus $\mathrm{ker}(\lambda I - (\widehat{A}-(\delta  -\varsigma) I)^\dagger) \cap \mathrm{ker}(B') =\{0\}$, for each $\lambda \in \sigma^+$, as desired.

\end{proof}

\begin{remark}\label{rem:alpha}
\em{
 We consider the special case $k =2$ and construct a control shape function $\alpha$ such that \eqref{eq:hautus} holds.  Throughout the  discussion let Assumptions \eqref{ass:A2}-\eqref{ass:A4} hold.
By a hypoellipticity argument, see e.g. \cite[Theorem 2.13]{LelS16} it follows that $\psi_2 \in C^\infty(\mathbb{R}^{2d})$.
By direct computation utilizing \eqref{eq:aux_1} it can be shown that for
 $\alpha$ with compact support we have
\begin{equation}\label{eq:aux11}
 \langle B,\psi_2  \rangle_{Y_0} = \int _{\mathbb R^{2d}}\alpha  \nabla_x \psi_2 ^\top v \, \mu \,\mathrm{d}x\,\mathrm{d}v-
 \int _{\mathbb R^{2d}}\alpha \psi_2 \nabla_x G ^\top v \, \mu\,\mathrm{d}x\,\mathrm{d}v.
\end{equation}

 This implies that
\begin{equation}\label{eq:aux12}
 \langle B,\psi _2 \rangle_{Y_0} = \int _{\mathbb R^{d}}\alpha(x) \Psi_2(x)  e^{-G(x)}\,\mathrm{d}x,
\end{equation}
where 
\begin{equation*}
\Psi_2(x)= \int _{\mathbb R^{d}}(\nabla_x \psi_2 -\psi_2 \nabla_x G)  v \, e^{-\frac{|v|^2}{2}}\,\mathrm{d}v,
\end{equation*}
is a $W^{1,\infty}_{\mathrm{loc}}(\mathbb{R}^d)$ function. We  now assume that there exists $\rho_1>0$ such that $\Psi_2$ is not a.e. 0 in the ball $B_{\rho_1}$ with center $0$ and radius $\rho$. Further we choose $\chi\in W^{1,\infty}(\mathbb{R}^d)$ such that $\chi \ge 0$, $\chi(x)=1$ for  $x \in B_{\rho_1}$ and  $\chi(x)=0$ for $|x|>\rho_2$, where $\rho_1< \rho_2$. Then it holds
\begin{align*}
\alpha&= \chi \Psi_2 \in W^{1,\infty}(\mathbb{R}^d), \quad \alpha(x)= 0 \text{ for } |x|>\rho_2,\\
 \langle B,\psi _2 \rangle_{Y_0}&= \int _{B_{\rho_2}} \chi \, \Psi^2_2(x)  e^{-G(x)}\,\mathrm{d}x >0,
\end{align*}
as desired.}
\end{remark}

\section{An infinite-horizon bilinear optimal control\\ problem}

Here we focus on an optimal stabilization problem associated to \eqref{eq:abs_bil_cau}. From \eqref{eq:split_total} it is known that the control does not effect the evolution of the state on $Y_0^{\perp}$. Therefore we concentrate on \eqref{eq:op_eq}, i.e., the controlled evolution of the state on $Y_0$. We further introduce a shift $\delta I$, with $\delta>0$, to the state equation which will guarantee an exponential decay rate for sufficiently small initial states. Setting
\begin{align}\label{eq:scaled_vars}
 \zeta(t) =e^{\delta t} \widehat{y}(t), \ \ w(t)=e^{\delta t} u(t), \ \ \widehat{N}_{\delta}(t)=e^{-\delta t} \widehat{N}
\end{align}
equation \eqref{eq:op_eq} is transformed to
\begin{align}\label{eq:scaled_dyn}
\ddt \zeta = (\widehat{A}+\delta I) \zeta + w \widehat{N}_{\delta} \zeta + w B, \ \ \zeta(0)=Py_0,
\end{align}
where we recall that $B=NQy_0=N \mathbbm{1}=-\nabla_x \alpha^\top v$. We thus consider
\begin{equation}\label{eq:opt_prob}
\begin{aligned}
  &\inf_{w\in L^2(0,\infty)} \mathcal{J}(w):=\frac{1}{2} \int_0^\infty \| \zeta(t)\|_{Y_0}^2 \, \mathrm{d}t+ \frac{\beta}{2}\int_0^\infty w(t)^2 \, \mathrm{d}t \\
  &\text{s.t. }  \ddt\zeta = (\widehat{A}+\delta I) \zeta + w \widehat{N}_{\delta} \zeta + w B  \ \ \text{in } [D(A^\dagger)\cap Y_0]', \ \  \zeta(0)=Py_0,
  \end{aligned}
\end{equation}
where $L^2(0,\infty)=L^2(0,\infty;\mathbb R)$. For convenience we point out  that the above control system is the formal notation for
\begin{equation}\label{eq:integral}
\begin{array}{ll}
\!\!\! \langle \zeta(t)-\zeta(0),z\rangle_{Y_0}  =\int_0^t\langle \zeta(s),(\widehat A^\dagger+\delta I) z \rangle_{Y_0}+ w(s) e^{-\delta s} \langle \zeta (s),\widehat {N} 'z \rangle_{Y_0} \!+ \!w(s) \langle B,z \rangle_{Y_0}\, \mathrm{d}s
 \end{array}
\end{equation}
for all $z\in \mathcal{D}(A^\dagger)\cap Y_0$. A control $w$ associated to \eqref{eq:opt_prob} is feasible  if $\mathcal{J}(w) < \infty$. In particular $\zeta \in L^2(0,\infty;Y_0)$.
Therefore, to define the solution concept associated to \eqref{eq:scaled_dyn} we can consider the $\delta \zeta$ summand as a $L^2(0,\infty;Y_0)$ perturbation, and call  $\zeta$ solution  to  \eqref{eq:scaled_dyn} if $\zeta \in L^2(0,\infty;V_v\cap Y_0)\cap L^\infty(0,\infty;Y_0)\cap C([0,\infty);Y_0)$. It is natural to include the property $\zeta \in C([0,\infty);Y_0)$, since $\widehat N$ is an admissible control operator, see the proof of Proposition  \ref{prop:sol_nonhom} below, and $t\mapsto \int_0^t e^{\widehat A(t-s)}
w(s)B \,\mathrm{d}s$ is continuous with values in $Y_0$.

\subsection{Existence of a feasible control}\label{s5.1}

Recall from, e.g., \cite[Theorem 6.2.7]{CurZ95}, that with $\alpha$ chosen according to Proposition \ref{prop:hautus}, the following operator Riccati equation has a unique stabilizing solution $\Pi \in \mathcal{B}(Y_0)$ with $\Pi=\Pi^*\succeq 0$:
\begin{equation}\label{eq:riccati}
\begin{aligned}
0=\langle \widehat{A}z_1,\Pi z_2 \rangle_{Y_0} + \langle \Pi z_1,\widehat{A}z_2 \rangle_{Y_0} + 2\delta \langle z_1,\Pi z_2 \rangle_{Y_0}  + \langle z_1,z_2\rangle _{Y_0}\!- \langle B,\Pi z_1 \rangle_{Y_0} \langle B,\Pi z_2 \rangle_{Y_0}
\end{aligned}
\end{equation}
for $z_1,z_2 \in \mathcal{D}(\widehat{A})$. In particular, it holds that the closed loop operator $A_{\pi}$ defined by
\begin{align}\label{eq:cl_op}
 A_\pi \zeta := (\widehat{A}+\delta I)\zeta - B \langle B,\Pi \zeta \rangle_{Y_0}, \quad \mathcal{D}(A_\pi)=\mathcal{D}(\widehat{A})
\end{align}
generates an exponentially stable $C_0$-semigroup $e^{A_\pi t}$ on $Y_0$. With regard to the subsequently following local fixed-point argument, let us first consider the following particular nonhomogeneous equation
\begin{align}\label{eq:non_hom}
 \ddt\zeta = A_\pi \zeta + \widehat{N} u, \ \ \zeta(0)=\zeta_0 \in Y_0.
\end{align}
We have the following result.
\begin{proposition}\label{prop:sol_nonhom} Let Assumptions \eqref{ass:A2}-\eqref{ass:A4} and \eqref{eq:hautus} hold. Then
 for all $\zeta_0 \in Y_0$ and $u\in L^2(0,\infty;Y_0)$, the unique solution of $\zeta$ of \eqref{eq:non_hom} is given by
 \begin{align}\label{eq:non_hom_mild}
  \zeta(t)= e^{A_\pi t}\zeta_0+ \int_0^t e^{A_\pi (t-s)} \widehat{N}u(s)\,\mathrm{d} s.
 \end{align}
 It holds that $\zeta \in L^2(0,\infty;V_v \cap Y_0) \cap L^\infty(0,\infty;Y_0)\cap C([0,\infty);Y_0)$. Moreover, there exists a constant $M>0$ s.t.
 \begin{align}\label{eq:non_hom_aprio}
 \mathrm{max}(  \| \zeta \|_{L^2(0,\infty;V_v)},\| \zeta \|_{L^\infty(0,\infty;Y)}) \le M ( \| \zeta_0 \| _{Y_0} + \| u\|_{L^2(0,\infty;Y_0)}).
 \end{align}
\end{proposition}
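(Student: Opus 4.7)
The plan is to establish Proposition \ref{prop:sol_nonhom} in three main steps: (i) transfer infinite-time admissibility of $\widehat N$ from $e^{\widehat A t}$ to the exponentially stable closed-loop semigroup $e^{A_\pi t}$; (ii) read off the mild solution, its continuity in $Y_0$ and the $L^\infty(Y_0)$ bound from standard admissibility theory; and (iii) combine an energy identity based on Lemma \ref{lem:domains} with a Lyapunov identity derived from the Riccati equation \eqref{eq:riccati} to produce the $L^2(V_v)$ bound.

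\textbf{Step 1 (admissibility transfer).} Since $B\in Y_0$ and $\Pi\in\mathcal B(Y_0)$, the operator $A_\pi-(\widehat A+\delta I)=-B\langle B,\Pi\cdot\rangle_{Y_0}$ is a bounded rank-one perturbation of $\widehat A+\delta I$ on $Y_0$. First, Proposition \ref{prop:mild_solution_lin} restricted to the invariant subspace $Y_0$, which is compatible with $\widehat N$ since $\langle \widehat N u,\mathbbm{1}\rangle_{V_v',V_v}=-\langle u,(\nabla_x\alpha)^\top\nabla_v\mathbbm{1}\rangle_{Y_0}=0$, gives infinite-time admissibility of $\widehat N$ for $e^{\widehat A t}$. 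Preservation of admissibility under bounded perturbations of the generator (see \cite[Thm.~5.4.2]{TucW09}) then yields admissibility of $\widehat N$ for $e^{A_\pi t}$, and the exponential stability of $e^{A_\pi t}$ upgrades this to infinite-time admissibility.

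\textbf{Step 2 (mild solution and $L^\infty$ bound).} By \cite[Prop.~4.2.5]{TucW09}, formula \eqref{eq:non_hom_mild} defines the unique solution and $\zeta\in C([0,\infty);Y_0)$. Infinite-time admissibility provides $\|\Phi_\infty u\|_{L^\infty(0,\infty;Y_0)}\le M\|u\|_{L^2(0,\infty;Y_0)}$, which together with $\|e^{A_\pi t}\zeta_0\|_{Y_0}\le C e^{-\omega t}\|\zeta_0\|_{Y_0}$ gives the $L^\infty$ part of \eqref{eq:non_hom_aprio}.

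\textbf{Step 3 ($L^2$ estimates).} Evaluating \eqref{eq:riccati} at $z_1=z_2=\zeta$ yields $2\langle(\widehat A+\delta I)\zeta,\Pi\zeta\rangle_{Y_0}=-\|\zeta\|_{Y_0}^2+|\langle B,\Pi\zeta\rangle_{Y_0}|^2$, so formally along the solution
\begin{equation*}
\tfrac{d}{dt}\langle \Pi\zeta,\zeta\rangle_{Y_0}+\|\zeta\|_{Y_0}^2+|\langle B,\Pi\zeta\rangle_{Y_0}|^2=2\langle \Pi\zeta,\widehat N u\rangle.
\end{equation*}
Integrating and controlling the right-hand side via Cauchy--Schwarz and Young gives $\int_0^\infty\|\zeta\|_{Y_0}^2\,\mathrm{d}s\le C(\|\zeta_0\|_{Y_0}^2+\|u\|_{L^2(0,\infty;Y_0)}^2)$ uniformly in $T$. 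Independently, pairing \eqref{eq:non_hom} with $\mu\zeta$ and using Lemma \ref{lem:domains} together with $\langle A_\pi\zeta,\zeta\rangle_{Y_0}=-\|\nabla_v\zeta\|_{Y_0}^2+\delta\|\zeta\|_{Y_0}^2-\langle B,\Pi\zeta\rangle_{Y_0}\langle B,\zeta\rangle_{Y_0}$ produces
\begin{equation*}
\tfrac{1}{2}\tfrac{d}{dt}\|\zeta\|_{Y_0}^2+\|\nabla_v\zeta\|_{Y_0}^2\le C\|\zeta\|_{Y_0}^2+\langle \widehat N u,\zeta\rangle_{V_v',V_v}.
\end{equation*}
Bounding the last pairing by $C\|u\|_{Y_0}\|\zeta\|_{V_v}$ via \eqref{eq:N_H1}, absorbing $\tfrac{1}{2}\|\nabla_v\zeta\|_{Y_0}^2$ by Young's inequality, integrating, and invoking the $L^2(Y_0)$ estimate above closes \eqref{eq:non_hom_aprio}.

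\textbf{Justification and main obstacle.} The chain-rule manipulations above are not directly valid for a mild solution. Following the strategy of Proposition \ref{prop:lin_more_reg}, I would pose the $\varepsilon$-regularized closed-loop equation with generator $A_{\pi,\varepsilon}:=A_\pi+R_\varepsilon$, whose solutions $\zeta_\varepsilon\in L^2(0,T;V)\cap H^1(0,T;V')$ admit the energy and Lyapunov identities rigorously; derive the estimates uniformly in $\varepsilon$; and pass to the weak limit. The hard part is the cross term $\langle \Pi\zeta,\widehat N u\rangle$ in the Lyapunov identity: since $\widehat N u\in V_v'$ it naturally pairs with $V_v$-test functions, whereas the Riccati operator $\Pi$ is only known to be bounded on $Y_0$ and need not improve regularity. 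At the $\varepsilon$-level the extra $x$-diffusion places $\zeta_\varepsilon$ in $V$, and infinite-time admissibility of $\widehat N$ for the regularized closed-loop semigroup provides a uniform-in-$\varepsilon$ bound on $|\int_0^T\langle \Pi\zeta_\varepsilon,\widehat N u\rangle\,\mathrm{d}s|$ that survives the passage $\varepsilon\to0$.
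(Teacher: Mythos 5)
Steps 1 and 2 of your plan coincide with the paper's argument: transfer admissibility of $\widehat N$ from $e^{\widehat A t}$ to $e^{A_\pi t}$ (you cite a bounded-perturbation result, the paper uses \cite[Corollary 5.5.1]{TucW09} noting $BB'\Pi=\widehat N\,(\mathbbm 1 B'\Pi)$, but either route is sound), then upgrade to infinite-time admissibility via exponential stability and read off the $C([0,\infty);Y_0)$ and $L^\infty$ conclusions.

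Step 3 is where you diverge, and the divergence contains a genuine gap. Your $L^2(0,\infty;Y_0)$ estimate relies on the Lyapunov identity
\begin{equation*}
\tfrac{d}{dt}\langle \Pi\zeta,\zeta\rangle_{Y_0}+\|\zeta\|_{Y_0}^2+|\langle B,\Pi\zeta\rangle_{Y_0}|^2=2\langle \Pi\zeta,\widehat N u\rangle,
\end{equation*}
but the right-hand side is never well-defined in your framework: $\widehat N u$ lives in $V_v'$ while $\Pi\zeta$ is only known to lie in $Y_0$, and nothing in the Riccati theory gives $\Pi(V_v\cap Y_0)\subset V_v$. Your fallback — $\varepsilon$-regularize so that $\zeta_\varepsilon\in L^2(0,T;V)$ — does not repair this, because the obstruction is in $\Pi$, not in $\zeta$: the pairing $\langle \Pi\zeta_\varepsilon,\widehat N u\rangle$ would require $\Pi\zeta_\varepsilon\in V_v$, which is still unavailable. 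Moreover, the appeal to ``infinite-time admissibility ... provides a uniform-in-$\varepsilon$ bound'' is not substantiated: admissibility controls $t\mapsto\int_0^t e^{A_\pi(t-s)}\widehat N u(s)\,\mathrm{d}s$ in $Y_0$, not the bilinear quantity $\int_0^T\langle \Pi\zeta_\varepsilon,\widehat N u\rangle\,\mathrm{d}s$. As written, the Lyapunov manipulation cannot be made rigorous even at the regularized level, so the $L^2(Y_0)$ bound is not established. The paper sidesteps this entirely: since $(A_\pi,\widehat N,I)$ is a well-posed linear system with strictly negative growth bound, the input-output map $u\mapsto\zeta$ is bounded on $L^2(0,\infty;Y_0)$ by general theory (e.g., \cite[Proposition 4.7]{TucW14}), giving the $L^2(Y_0)$ estimate with no Lyapunov identity at all. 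Your second energy estimate (pairing with $\mu\zeta$, using Lemma \ref{lem:domains} and $N\in\mathcal B(Y,V_v')$) does match the paper's strategy for the $V_v$ regularity and would close the argument \emph{once} the $L^2(Y_0)$ bound is in hand; it is only the latter that needs the well-posed-system argument rather than the Riccati--Lyapunov one.
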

\begin{proof}
In the proof of Proposition \ref{prop:mild_solution_lin}, we have already shown that $N$ is an (infinite-time) admissible control operator for $e^{At}$. From \eqref{eq:aux8} we conclude that also $\widehat{N}$ is an (infinite-time) admissible control operator for $e^{\widehat{A}t}$. We further have that $\widehat{N}$ is an admissible control operator for $e^{(\widehat{A}+\delta I )t}$. Now recall that $A_\pi$ is defined by
\begin{align*}
 A_\pi = \widehat{A}+\delta I - B B' \Pi = \widehat{A}+\delta I - N \mathbbm{1} B'\Pi= \widehat{A}+\delta I - \widehat{N} \mathbbm{1} B'\Pi.
\end{align*}
Since $D:=\mathbbm{1} B'\Pi\in \mathcal{B}(Y_0)$, with \cite[Corollary 5.5.1]{TucW09}, we obtain that $\widehat{N}$ is an admissible control operator for $e^{A_\pi t}$. Since $e^{A_\pi t}$ is exponentially stable, from \cite[Proposition 4.4.5]{TucW09}, it additionally follows that $N$ is infinite-time admissible. Altogether, this implies the existence of a constant $M_1$ such that
\begin{align*}
 \sup\limits_{t\ge 0} \| \int_0^t e^{A_\pi (t-s)} \widehat{N}u(s)\, \mathrm{d}s\|_{Y_0}\le M_1 \| u\|_{L^2(0,\infty;Y_0)} \ \ \forall u \in L^2(0,\infty;Y_0).
\end{align*}
This estimate together with the exponential stability of $e^{A_\pi t}$ implies the second estimate in \eqref{eq:non_hom_aprio}. With regard to the first estimate, let us first note that the mapping
\begin{align}\label{eq:in_out_map}
F\colon L^2_{\mathrm{loc}}(0,\infty;Y_0)\to L^2_{\mathrm{loc}}(0,\infty;Y_0), \ \ u\mapsto Fu = \zeta,
\end{align}
where $\zeta$ is defined by \eqref{eq:non_hom_mild} is the \emph{input-output map} associated with the trivial observation operator $C=I$, see, e.g., \cite{TucW14}. In particular, the system described by $(A_\pi,\widehat{N},I)$ is a well-posed linear system in the sense of \cite{Sta05,TucW14}. Again using the exponential stability of $e^{A_\pi t}$, we conclude that the growth bound of $F$ is strictly negative and, hence, $F\in \mathcal{B}(L^2(0,\infty;Y_0))$, see, e.g., \cite[Proposition 4.7]{TucW14} or \cite[Theorem 2.5.4(iii)]{Sta05}. Thus, there exists a constant $M_2>0$ such that
\begin{align}\label{eq:l2_y_est}
  \| \zeta \| _{L^2(0,\infty;Y_0)} \le M_2 \left( \| \zeta_0\|_{Y_0} + \| u\| _{L^2(0,\infty;Y)} \right).
  \end{align}
For the additional (spatial) regularity of $\zeta$, note that we can resort to the linear system
\begin{align*}
 \ddt\zeta = \widehat{A}\zeta + f,
\end{align*}
where $f:= \delta \zeta - B\langle B,\Pi \zeta \rangle_{Y_0} + \widehat{N}u$ satisfies $f \in L^2(0,\infty;V_v')$. With \eqref{eq:aprio_lin} in Proposition \ref{prop:lin_more_reg}, we can argue that there exists a constant $M_3$ such that
\begin{align}\label{eq:l2_v_est}
 \|\nabla_v \zeta \| _{L^2(0,\infty;Y)} \le M_3 \left( \| \zeta_0\|_{Y_0} +  \| u\| _{L^2(0,\infty;Y)} \right).
\end{align}
Combining \eqref{eq:l2_v_est} and \eqref{eq:l2_y_est} shows the announced estimates. Moreover, by \cite[Proposition 4.2.5]{TucW09}, the mild solution $\zeta$ in \eqref{eq:non_hom_mild} is continuous.
\end{proof}

\begin{lemma}\label{lem:loc_lip}
 Let $\zeta_1,\zeta_2 \in L^2(0,\infty;Y_0) \cap L^\infty(0,\infty;Y_0)$. Then there exists a constant $\widetilde{M}>0$ such that
 \begin{align*}
  &\|e^{-\delta \cdot}( \langle B,\Pi \zeta_1 \rangle _{Y_0}  \zeta_1 -
  \langle B,\Pi \zeta_2 \rangle _{Y_0}  \zeta_2) \|_{L^2(0,\infty;Y)}\\
  &\quad\le \widetilde{M}\left( \| \zeta_1\|_{L^\infty(0,\infty;Y)} + \| \zeta_2\|_{L^\infty(0,\infty;Y)}   \right)\| \zeta_1-\zeta_2\| _{L^2(0,\infty;Y)}.
 \end{align*}
\end{lemma}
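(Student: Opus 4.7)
The plan is to reduce the nonlinear bilinear expression to two terms, each of which factors as a scalar quantity times an element of $Y_0$, so that Hölder's inequality in time (pairing $L^\infty$ with $L^2$) applies cleanly.

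First I would use the ``add and subtract'' trick
\begin{equation*}
\langle B,\Pi \zeta_1\rangle_{Y_0}\zeta_1-\langle B,\Pi \zeta_2\rangle_{Y_0}\zeta_2
= \langle B,\Pi \zeta_1\rangle_{Y_0}(\zeta_1-\zeta_2)+\langle B,\Pi(\zeta_1-\zeta_2)\rangle_{Y_0}\zeta_2,
\end{equation*}
which uses linearity of $\Pi$ and of the inner product in the first slot. Multiplying by $e^{-\delta t}$ and taking the $Y_0$-norm at each time $t$, I apply Cauchy--Schwarz to estimate the scalar factors by $|\langle B,\Pi \zeta_i(t)\rangle_{Y_0}|\le \|B\|_{Y_0}\|\Pi\|_{\mathcal{B}(Y_0)}\|\zeta_i(t)\|_{Y_0}$.

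Next I would take the $L^2(0,\infty)$ norm in $t$. For the first summand, bounding $\|\zeta_1(t)\|_{Y_0}\le \|\zeta_1\|_{L^\infty(0,\infty;Y_0)}$ pulls a constant out and leaves $\|e^{-\delta\cdot}(\zeta_1-\zeta_2)\|_{L^2(0,\infty;Y_0)}$. Since $\delta>0$, the trivial bound $e^{-\delta t}\le 1$ shows this is dominated by $\|\zeta_1-\zeta_2\|_{L^2(0,\infty;Y_0)}$. The second summand is handled identically, with the roles of $\zeta_1$ and $\zeta_2$ swapped, yielding the factor $\|\zeta_2\|_{L^\infty(0,\infty;Y_0)}$. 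Adding the two contributions gives the stated inequality with $\widetilde{M}=\|B\|_{Y_0}\|\Pi\|_{\mathcal{B}(Y_0)}$.

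There is no real obstacle here; the only subtle point is choosing the splitting so that the scalar coefficient and the vector-valued factor are distributed correctly, namely so that each summand has one copy of $(\zeta_1-\zeta_2)$ (placed in $L^2$) and one factor whose $L^\infty$-norm in time appears in the final estimate. The assumption $\zeta_1,\zeta_2\in L^\infty(0,\infty;Y_0)$ is exactly what makes that pairing legitimate, and the factor $e^{-\delta t}$ plays no essential role beyond being bounded by $1$.
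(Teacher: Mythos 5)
Your proof is correct and follows essentially the same route as the paper: the identical add-and-subtract decomposition, followed by a triangle inequality, pairing $L^\infty$ in time against $L^2$ for each summand, and bounding the scalar coefficients via Cauchy--Schwarz with $\|B\|_{Y_0}\|\Pi\|_{\mathcal{B}(Y_0)}$. The only cosmetic difference is that the paper keeps the factor $e^{-\delta\cdot}$ inside the $L^\infty$-in-time norms rather than discarding it via $e^{-\delta t}\le 1$ at the outset, which changes nothing.
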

\begin{proof}  
\begin{align*}
& \|e^{-\delta \cdot}( \langle B,\Pi \zeta_1 \rangle   \zeta_1 -
  \langle B,\Pi \zeta_2 \rangle_{Y_0}   \zeta_2) \|_{L^2(0,\infty;Y)} \\
  & \le \|e^{-\delta \cdot} \langle B,\Pi \zeta_1 \rangle_{Y_0} (  \zeta_1 - \zeta_2)\|_{L^2(0,\infty;Y)} + \| e^{-\delta \cdot}
  (\langle B,\Pi \zeta_1 \rangle_{Y_0}-\langle B,\Pi \zeta_2 \rangle_{Y_0})   \zeta_2) \|_{L^2(0,\infty;Y)} \\
  & \le \| e^{-\delta \cdot } \langle B,\Pi \zeta_1 \rangle \|_{L^\infty(0,\infty)} \| \zeta_1 - \zeta_2 \| _{L^2(0,\infty;Y)} \\
  &\qquad + \| e^{-\delta \cdot} \zeta_2 \|_{L^\infty(0,\infty;Y_0)} \| \langle \Pi B,\zeta_1-\zeta_2 \rangle_{Y_0} \| _{L^2(0,\infty)} \\
  & \le\widetilde{M}( \| \zeta_1 \| _{L^\infty(0,\infty;Y)}+\| \zeta_2 \| _{L^\infty(0,\infty;Y)}) \| \zeta_1 - \zeta_2 \| _{L^2(0,\infty;Y)},
\end{align*}
for a constant $\widetilde{M}$ depending on $\|B\|_{Y_0}$ and $\|\Pi\|_{\mathcal{B}(Y_0)}$.
\end{proof}

\begin{theorem}\label{theo:feasible}Let Assumptions \eqref{ass:A2}-\eqref{ass:A4} and \eqref{eq:hautus} hold. Let $M,\widetilde{M}$ denote the constants from Proposition \ref{prop:sol_nonhom} and Lemma \ref{lem:loc_lip}, respectively. If $\| \zeta_0\| < \frac{3}{16M^2 \widetilde{M}},$ then 
 \begin{align*}
  \ddt\zeta = A_\pi \zeta - \langle B,\Pi \zeta \rangle_{Y_0} \widehat{N}_{\delta} \zeta, \ \ \zeta(0)=Py_0,
 \end{align*}
admits a unique solution $\zeta \in L^2(0,\infty;V_v\cap Y_0)\cap L^\infty(0,\infty;Y_0)\cap C([0,\infty);Y_0)$. This solution satisfies \\[-4ex]
\begin{align*}
\max( \| \zeta \|_{L^2(0,\infty;V_v)},\| \zeta\|_{L^\infty(0,\infty;Y_0)}) \le \tfrac{1}{4 M \widetilde{M}}.
\end{align*}
\end{theorem}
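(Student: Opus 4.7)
The plan is to prove the theorem by a Banach fixed-point argument on a closed ball of an appropriate function space. Let
$$X := L^\infty(0,\infty;Y_0) \cap L^2(0,\infty;V_v\cap Y_0),\qquad \|\zeta\|_X := \max\bigl(\|\zeta\|_{L^\infty(0,\infty;Y_0)},\,\|\zeta\|_{L^2(0,\infty;V_v)}\bigr),$$
and, for $r := \tfrac{1}{4M\widetilde M}$, let $\mathcal{B}_r := \{\zeta \in X : \|\zeta\|_X \le r\}$. I would define $T\colon \mathcal{B}_r \to X$ by $T\zeta := \tilde\zeta$, where $\tilde\zeta$ is the unique mild solution, supplied by Proposition \ref{prop:sol_nonhom}, of the linear inhomogeneous problem
$$\ddt\tilde\zeta = A_\pi \tilde\zeta + \widehat{N} u_\zeta,\qquad \tilde\zeta(0)=Py_0,\qquad u_\zeta(t):= -e^{-\delta t}\langle B,\Pi\zeta(t)\rangle_{Y_0}\,\zeta(t).$$
A fixed point of $T$ solves the stated nonlinear equation, because $\widehat{N}u_\zeta = -\langle B,\Pi\zeta\rangle_{Y_0}\,\widehat{N}_\delta\zeta$.

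First I would verify that $u_\zeta \in L^2(0,\infty;Y_0)$ whenever $\zeta \in \mathcal{B}_r$. Applying Lemma \ref{lem:loc_lip} with $\zeta_2=0$ and using the continuous embedding $V_v \hookrightarrow Y_0$ to pass from $\|\zeta\|_{L^2(V_v)}$ to $\|\zeta\|_{L^2(Y_0)}$, one obtains $\|u_\zeta\|_{L^2(0,\infty;Y_0)} \le \widetilde M\,\|\zeta\|_{L^\infty(Y_0)}\|\zeta\|_{L^2(Y_0)} \le \widetilde M r^2$. Proposition \ref{prop:sol_nonhom} then yields $\|T\zeta\|_X \le M(\|Py_0\|_{Y_0} + \widetilde M r^2)$. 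With the calibration $r=\tfrac{1}{4M\widetilde M}$, one has $M\widetilde M r^2 = r/4$, so the self-map property $\|T\zeta\|_X \le r$ is equivalent to $M\|Py_0\|_{Y_0}\le \tfrac{3r}{4}=\tfrac{3}{16M\widetilde M}$, which is precisely the smallness hypothesis on the initial data.

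For the contraction property, I would subtract the two linear equations for $\zeta_1,\zeta_2\in\mathcal{B}_r$: the difference $T\zeta_1-T\zeta_2$ solves the same linear problem with zero initial data and forcing $\widehat{N}(u_{\zeta_1}-u_{\zeta_2})$. Proposition \ref{prop:sol_nonhom} combined with Lemma \ref{lem:loc_lip} then gives
$$\|T\zeta_1 - T\zeta_2\|_X \le M\,\|u_{\zeta_1}-u_{\zeta_2}\|_{L^2(0,\infty;Y_0)} \le 2Mr\widetilde M\,\|\zeta_1-\zeta_2\|_{L^2(0,\infty;Y_0)} = \tfrac{1}{2}\,\|\zeta_1-\zeta_2\|_{L^2(0,\infty;Y_0)},$$
which, since $\|\cdot\|_{L^2(Y_0)}\le\|\cdot\|_X$, is a strict contraction on $\mathcal{B}_r$. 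Banach's fixed-point theorem then delivers the unique $\zeta \in \mathcal{B}_r$ with $T\zeta=\zeta$, and the announced a priori bound is built into membership in $\mathcal{B}_r$; the continuity $\zeta\in C([0,\infty);Y_0)$ is inherited from Proposition \ref{prop:sol_nonhom}.

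The delicate point, and where the argument has essentially no slack, is the simultaneous calibration: the self-map condition requires $M\widetilde M r^2 - r + M\|Py_0\|_{Y_0}\le 0$, while the contraction condition requires $2M\widetilde M r < 1$. The choice $r=\tfrac{1}{4M\widetilde M}$ is the specific radius at which both hold and the admissible size of $\|Py_0\|_{Y_0}$ is maximized to $\tfrac{3}{16M^2\widetilde M}$, matching the hypothesis exactly. The rest is routine but demands careful bookkeeping of the constants $M$ and $\widetilde M$ from Proposition \ref{prop:sol_nonhom} and Lemma \ref{lem:loc_lip}; uniqueness in the whole class $L^2(0,\infty;V_v\cap Y_0)\cap L^\infty(0,\infty;Y_0)$ follows from a standard Grönwall estimate based on the same Lipschitz bound.
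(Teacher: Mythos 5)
Your fixed-point argument is correct and, judging by the exact match of the constant calibrations ($r=\tfrac{1}{4M\widetilde M}$, $M\widetilde M r^2=\tfrac{r}{4}$, contraction factor $2M\widetilde M r=\tfrac12$, and the threshold $\tfrac{3}{16M^2\widetilde M}$), it reconstructs precisely the argument the paper delegates by citation to the analogous results in \cite{BreKP17,BreKP19}: a Banach contraction on the ball of radius $r$ in $L^\infty(0,\infty;Y_0)\cap L^2(0,\infty;V_v\cap Y_0)$, with the linear solution map of Proposition~\ref{prop:sol_nonhom} supplying the self-map and contraction bounds via Lemma~\ref{lem:loc_lip} and the embedding $V_v\hookrightarrow Y_0$. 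The only place you gloss over is uniqueness in the whole class (as opposed to within $\mathcal{B}_r$); you correctly flag that this needs a separate local-in-time argument, and it is worth noting that the naive Gr\"onwall route is slightly delicate here because the admissibility estimate bounds $L^\infty(0,\tau;Y_0)$ by $L^2(0,\tau;Y_0)$ rather than pointwise --- one needs to exploit the factor $\tau^{1/2}$ gained from $\|\eta\|_{L^2(0,\tau)}\le\tau^{1/2}\|\eta\|_{L^\infty(0,\tau)}$ to close the loop on a short interval and then iterate.
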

\begin{proof}
  This result can be proved by arguments similar to those provided in the proofs of \cite[Theorem 4.8]{BreKP17} and \cite[Theorem 25]{BreKP19}.
\end{proof}

Theorem  \ref{theo:feasible} implies that the choice $w=- \langle B,\Pi \zeta \rangle_{Y_0}\in L^2(0,\infty)$ is a feasible control for \eqref{eq:opt_prob}. In particular, the associated state satisfies \eqref{eq:integral}.

\subsection{Existence of an optimal control}\label{s5.2}

Here we provide sufficient conditions which guarantee the existence of an optimal control for problem \eqref{eq:opt_prob}.

\begin{theorem}\label{theo:existopt}
Let Assumptions \eqref{ass:A2}-\eqref{ass:A4} hold and assume the existence of an admissible control for \eqref{eq:opt_prob}.  Then \eqref{eq:opt_prob} admits an optimal control $\bar w \in L^2(0,\infty)$.
\end{theorem}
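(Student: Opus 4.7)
The plan is the direct method of the calculus of variations. Let $J^\star := \inf_w \mathcal{J}(w)$; by the hypothesis that an admissible control exists we have $J^\star<\infty$, so there is a minimizing sequence $\{w_n\}\subset L^2(0,\infty)$ with associated states $\zeta_n$ satisfying \eqref{eq:integral} and $\mathcal{J}(w_n)\to J^\star$. The cost functional immediately yields the uniform bounds $\|w_n\|_{L^2(0,\infty)}\le C$ and $\|\zeta_n\|_{L^2(0,\infty;Y_0)}\le C$, so along a subsequence $w_n\rightharpoonup \bar w$ weakly in $L^2(0,\infty)$ and $\zeta_n \rightharpoonup \bar\zeta$ weakly in $L^2(0,\infty;Y_0)$. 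I would then upgrade the regularity of $\zeta_n$ on every finite interval $(0,T)$ by reproducing the energy estimate \eqref{eq:aprio_aux}: Lemma \ref{lem:domains} furnishes the coercive contribution $-\|\nabla_v\zeta_n\|_Y^2$, the bilinear term $w_n\langle \hat N_\delta\zeta_n,\zeta_n\rangle_{Y_0}$ is controlled via \eqref{eq:N_H1} and $\alpha\in W^{1,\infty}$, and the $\delta$-shift together with $w_n\langle B,\zeta_n\rangle_{Y_0}$ are absorbed by Young's and Gronwall's inequalities. This delivers $\{\zeta_n\}$ bounded in $L^\infty(0,T;Y_0)\cap L^2(0,T;V_v\cap Y_0)$ and $\{\dot\zeta_n\}$ bounded in $L^2(0,T;Y_{-1})$ for every $T>0$.

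The crucial step is upgrading the weak convergence of $\zeta_n$ to strong convergence locally in time, which is needed to make sense of the limit in the bilinear contribution $\int_0^t w_n(s)e^{-\delta s}\langle\zeta_n(s),\hat N'z\rangle_{Y_0}\,\mathrm{d}s$ of \eqref{eq:integral}. Here I would invoke the compactness of $e^{\hat A t}$ for $t>0$ provided by Proposition \ref{prop:sg_compact}. Writing $\zeta_n$ in mild form
\[
\zeta_n(t)=e^{(\hat A+\delta I)t}Py_0+\int_0^t e^{(\hat A+\delta I)(t-s)}\bigl(w_n(s)\hat N_\delta(s)\zeta_n(s)+w_n(s)B\bigr)\,\mathrm{d}s,
\]
and noting that, by the preceding step, $w_n\zeta_n$ is bounded in $L^2(0,T;Y_0)$ and $w_nB$ in $L^2(0,T;Y_0)$, the compactness of the semigroup combined with the infinite-time admissibility of $\hat N$ (established inside Propositions \ref{prop:mild_solution_lin} and \ref{prop:sol_nonhom}) makes each slice $\{\zeta_n(t)\}$ precompact in $Y_0$ for fixed $t>0$ and $\{\zeta_n\}$ equicontinuous in $C([0,T];Y_0)$; an Arzel\`a--Ascoli argument then yields $\zeta_n\to\bar\zeta$ strongly in $C([0,T];Y_0)$, and a diagonal extraction across $T\to\infty$ delivers strong convergence in $L^2_{\mathrm{loc}}(0,\infty;Y_0)$.

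With strong convergence of $\zeta_n$ in hand, I can pass to the limit in every summand of \eqref{eq:integral} tested against an arbitrary $z\in\mathcal{D}(\hat A^\dagger)\cap Y_0$: the two linear contributions converge by weak convergence of $\zeta_n$ and $w_n$ respectively, while the bilinear integrand pairs a weak-$L^2$ with a strong-$L^2$ factor and therefore converges in $L^1(0,T)$. This shows that $\bar\zeta$ solves \eqref{eq:integral} with control $\bar w$, so $\bar w$ is admissible. Since both summands of $\mathcal{J}$ are squared Hilbert-space norms, hence convex and continuous, weak lower semicontinuity gives $\mathcal{J}(\bar w)\le\liminf_n \mathcal{J}(w_n)=J^\star$, which forces equality and proves optimality of $\bar w$. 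The main obstacle is the compactness step in the second paragraph: a direct Aubin--Lions argument fails because $V_v\cap Y_0$ is not compactly embedded in $Y_0$ (no spatial regularity is controlled by $V_v$), so the semigroup compactness of Proposition \ref{prop:sg_compact} has to do the work through the mild formulation.
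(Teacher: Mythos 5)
Your overall architecture — direct method, minimizing sequence, uniform bounds, some compactness to pass to the limit in the bilinear term, then weak lower semicontinuity — is exactly the paper's. The a priori bounds in $L^\infty(0,T;Y_0)$ and $W^{1,2}(0,T;Y_{-1})$ that you derive on finite intervals are also what the paper uses. The gap is in the crucial compactness step: your Arzelà--Ascoli argument via the mild formula and Proposition~\ref{prop:sg_compact} does not go through as stated, because the compactness of the semigroup $e^{\widehat A t}$ combined with admissibility of $\widehat N$ does not imply that $\Phi_t(w_n\widehat N_\delta\zeta_n)=\int_0^t e^{(\widehat A+\delta I)(t-s)}\widehat N_\delta(s)w_n(s)\zeta_n(s)\,\mathrm{d}s$ lands in a precompact set. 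The standard proof of compactness of the variation-of-constants map (split off $e^{\widehat A \varepsilon}\Phi_{t-\varepsilon}$ and show the tail over $[t-\varepsilon,t]$ is small uniformly in $n$) requires either a bounded control operator or an admissibility constant $K_\varepsilon\to 0$, neither of which you have: $\widehat N$ maps into $Y_{-1}$, not $Y_0$, and infinite-time admissibility gives a $\tau$-uniform bound on $K_\tau$, not decay as $\tau\to 0$. Since $\|w_n\zeta_n\|_{L^2(t-\varepsilon,t;Y_0)}$ is not small uniformly in $n$ either, the tail cannot be absorbed, and precompactness of the slices $\{\zeta_n(t)\}$ is unjustified.

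You correctly observe that Aubin--Lions with $V_v\cap Y_0\hookrightarrow Y_0$ fails for lack of spatial compactness, but you then reach for the wrong compactness. The paper instead exploits that $\widehat A^\dagger$ has a \emph{compact resolvent} (from \cite[Theorem 3.1]{HerN04}, the same input behind Proposition~\ref{prop:sg_compact}), so the embedding $\mathcal{D}(A^\dagger)\cap Y_0\hookrightarrow Y_0$ is compact. Coupled with the bound on $\{\zeta_n\}$ in $W^{1,2}(0,t;[\mathcal{D}(A^\dagger)]')\cap L^\infty(0,t;Y_0)$, this is an Aubin--Lions--type situation with the \emph{dual} scale $[\mathcal{D}(A^\dagger)]'\subset Y_0$ rather than $V_v\cap Y_0\subset Y_0$: for fixed $z\in\mathcal{D}(A^\dagger)\cap Y_0$ the scalar functions $\langle\zeta_n,z\rangle_\mathcal{D}$ are bounded in $W^{1,2}(0,t)$ and hence precompact in $C([0,t])$; a three-$\varepsilon$ argument using compactness of $\mathcal{D}(A^\dagger)\cap Y_0$ in $Y_0$ then upgrades this to $\langle\zeta_n,z\rangle_{Y_0}\to\langle\bar\zeta,z\rangle_{Y_0}$ in $L^2(0,t)$ for all $z\in Y_0$, which is exactly enough to pass to the limit in the bilinear term since $\widehat N'z\in Y_0$. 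If you replace your semigroup-compactness step by this resolvent-compactness step, your proof becomes essentially the paper's; the remainder (weak lower semicontinuity, feasibility of the limit) is fine.
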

\begin{proof}

Since $\mathcal{J}$ is bounded from below by $0$, and due to the assumption on the existence of an admissible control, there exists a minimizing sequence $\{w_n\}_{n=1}^\infty$ for \eqref{eq:opt_prob}. Observe that $\zeta_n=\zeta(w_n)$ satisfy
\begin{equation}\label{eq:approx}
\begin{array}{ll}
 \langle \zeta_n(t)-\zeta(0),z\rangle_{Y_0} \\[1.5ex]
 =\int_0^t\langle  \zeta_n(s),(\widehat A^\dagger+\delta I) z \rangle_{Y_0}+ w_n(s) \langle  \zeta_n (s), \widehat{N}_\delta(s)' z \rangle_{Y_0} +  w_n(s) \langle B,z \rangle_{Y_0}\, \mathrm{d}s
 \end{array}
\end{equation}
for all $z\in \mathcal{D}( A^\dagger)\cap Y_0$, and each  $t>0$. Next we pass to the limit  $n\to \infty$ in the above equation.  The choice of the cost functional implies that $\{w_n\}_{n=1}^\infty$ is bounded in $L^2(0,\infty)$. Let us now fix an arbitrary $t>0$. We can  follow the proof of Theorem \ref{thm:global_mild_solution} to assert that $\{\zeta_n\}_{n=1}^\infty$  is bounded in $C([0,t]; Y_0)$. Using \eqref{eq:scaled_dyn}, the boundedness of  $\{\zeta_n\}_{n=1}^\infty$  in $W^{1,2}(0,t; [ \mathcal{D}( A^\dagger)]')$ follows. Thus, for a subsequence, denoted by the same symbols, we have that
\begin{equation*}
w_n\rightharpoonup \bar w \text{ in } L^2(0,\infty),\, \zeta_n\rightharpoonup \bar \zeta \text{ in } W^{1,2}(0,t;[ \mathcal{D}( A^\dagger)]'), \text{and } \zeta_n \stackrel{w^*}{\rightharpoonup} \bar \zeta \text{ in } L^\infty(0,\infty; Y_0),
\end{equation*}
for some $\bar w \in L^2(0,\infty)$, and $\bar \zeta_{|(0,t)} \in W^{1,2}(0,t;[ \mathcal{D}( A^\dagger)]' )\cap $ and $\bar{\zeta} \in  L^\infty(0,\infty;Y_0)$. This implies that $\langle \zeta_n,z\rangle_{\mathcal{D}} \rightharpoonup   \langle \bar{\zeta},z\rangle_{\mathcal{D}}$ in $W^{1,2}(0,t)$, i.e.
\begin{equation*}
\langle \zeta_n,z\rangle_{\mathcal{D}}\rightharpoonup \langle \bar \zeta,z\rangle_{\mathcal{D}}, \text{ and } \langle \ddt \zeta_n,z\rangle_{\mathcal{D}}\rightharpoonup \langle \ddt \bar\zeta,z\rangle_{\mathcal{D}} \text{ in } L^2(0,t).
\end{equation*}
Due to the compact embedding $W^{1,2}(0,t)\subset C([0,t])$, it follows that
\begin{equation}\label{eq:aux13}
\langle \zeta_n,z\rangle_{\mathcal{D}}\to \langle \bar \zeta,z\rangle_{\mathcal{D}} \text{ in } C([0,t])  \text{ for all } z\in \mathcal{D}( A^\dagger) \cap Y_0.
\end{equation}
Consequently, we can pass to the limit on the left hand side of \eqref{eq:approx} to obtain 
\begin{align}\label{eq:aux13a}
 \langle \zeta_n(s)-\zeta(0),z\rangle_{Y_0} \to \langle \bar{\zeta} - \zeta(0),z\rangle_{Y_0}
\end{align}
for all $s\in (0,t)$.

Using the density of $\mathcal{D}(A^\dagger)\cap Y_0$ in $Y_0$, a three-$\epsilon$ argument and \eqref{eq:aux13} we find 
\begin{equation*}
\langle \zeta_n,g\rangle_{Y_0}\to \langle \bar \zeta,g\rangle_{Y_0} \text{ in } L^2(0,t)  \text{ for all } g\in Y_0.
\end{equation*}
Since $\widehat N' \in \mathcal{B}(\mathcal{D}(A^\dagger)\cap Y_0,Y_0)$, this implies that 
\begin{align}\label{eq:aux14}
\langle \zeta_n(\cdot),\widehat{N}_{\delta}(\cdot)'z\rangle _{Y_0} \to
\langle \zeta_n(\cdot),\widehat{N}_{\delta}(\cdot)'z\rangle _{Y_0} 
\end{align}
in $L^2(0,s)$ for all $ s \in (0,t)$.
Using \eqref{eq:aux13a}, \eqref{eq:aux14} and the fact that $\widehat N' \in \mathcal{B}(\mathcal{D}(A^\dagger)\cap Y_0,Y_0)$ we can  pass to the limit in \eqref{eq:approx} to obtain for all $z\in \mathcal{D}(A^\dagger)\cap Y_0$ and every $s\in [0,t]$:
\begin{equation}\label{eq:limit_solution}
\begin{array}{ll}
 &\langle \bar \zeta(s)\!-\!\zeta(0),z\rangle_{Y_0}
 \! \\
 &\qquad =\!\int_0^s\big(\langle \bar \zeta(\sigma),(\widehat A^\dagger+\delta I) z \rangle_{Y_0}\!+\! \bar w(\sigma) \langle  \bar \zeta (\sigma), \widehat{N}_\delta(\sigma)' z \rangle_{Y_0} \!+  \!\bar w(\sigma) \langle B,z \rangle_{Y_0}\, \big)\mathrm{d}\sigma.
 \end{array}
\end{equation}
 Thus $\bar \zeta$ is the solution associated to the control $\bar w$. To assert that this identity is true for all $s\in [0,\infty)$, we argue by extension. Let $\tilde{t}>t$, then following the above arguments we obtain for all $s\in [0,\tilde{t}]$ that
\begin{equation*}
\begin{array}{ll}
 &\langle \bar \zeta_{\tilde{t}}(s)\!-\!\zeta(0),z\rangle_{Y_0}
 \! \\
 &\qquad =\!\int_0^s\big(\langle \bar \zeta_{\tilde{t}}(\sigma),(\widehat A^\dagger+\delta I) z \rangle_{Y_0}\!+\! \bar w(\sigma) \langle  \bar \zeta_{\tilde{t}} (\sigma), \widehat{N}_\delta(\sigma)' z \rangle_{Y_0} \!+  \!\bar w(\sigma) \langle B,z \rangle_{Y_0}\, \big)\mathrm{d}\sigma.
 \end{array}
 \end{equation*}
 Since for fixed control $\bar{w}$, the bilinear system has a unique solution, we conclude that $(\bar{\zeta}_{\tilde{t}})_{|(0,t)}=\bar{\zeta}$. Consequently, \eqref{eq:limit_solution} holds for all $s\in [0,\infty)$. 
 
 Let us further note that since $\bar{w}\in L^2(0,\infty)$ and $\bar{\zeta}\in L^\infty(0,\infty;Y_0)$, we can use that $t\mapsto \int_0^t e^{\widehat{A}(t-s)}\bar{w}(s) B\,\mathrm{d} s$ is continuous with values in $Y_0$ together with Proposition \ref{prop:sol_nonhom} applied for $u=\bar{w}\bar{\zeta}$ to obtain $\bar{\zeta}\in C([0,\infty);Y_0)$.
 
Finally, by weak lower semi-continuity of norms, we have that
\begin{equation*}
\mathcal{J}(\bar w)= \frac{1}{2} \int_0^\infty \|\bar \zeta(t)\| _{Y_0}^2 \, \mathrm{d}t+ \frac{\beta}{2}\int_0^\infty \bar w(t)^2 \, \mathrm{d}t \le \liminf_{n\to\infty} \mathcal{J}(w_n),
\end{equation*}
and thus $\bar w$ is an optimal solution for \eqref{eq:opt_prob}.

\end{proof}

\section{Numerical experiments}\label{s6}

Here we complement our theoretical results with two numerical examples. These  should be considered as a proof  of concept rather than a complete  numerical investigation which, while certainly of interest, is out of the scope of the current manuscript. Indeed, the challenges include handling the unboundedness of the spatial domain and the fact that the infinite-horizon optimal control problem is posed for an unstable system. Note also that $\mu$ scales exponentially such that the numerical realization of the weighted inner products may suffer from finite numerical precision.

The controls we implemented correspond to the Riccati-based strategy discussed in the context of feasibility of \eqref{eq:opt_prob} in Section \ref{s5.1}. While such controls are not optimal, their performance is often sufficient for practical purposes. In particular this holds true for small perturbations around the steady state which is a consequence of the feedback law $\langle B,\Pi \zeta \rangle_{Y_0}$ being a second order Taylor approximation of the optimal feedback law obtained by differentiating the minimal value function, see \cite{BreKP19,TheBR10}.

All simulations were generated on an Intel i5-9400F @ 4.1 GHz x 6,
64 GB RAM,  \matlab \;version R2019b. For the solutions of the nonlinear ODE systems, we utilize the  \matlab\;routine \texttt{ode23} with (default) relative and absolute tolerances.

\subsection{Spatial discretization and numerical realization}\label{s6.1}

For the numerical realization of the (un)controlled systems \eqref{eq:Psub} and \eqref{eq:op_eq}, we replace the infinite-dimensional systems by spatially discrete surrogate models.

 For the uncontrolled potential $G$, we choose the   triple well potential discussed in \cite{BreKP17} and defined by
 $
 G(x):=\tfrac{ ((\tfrac{1}{2}x^2 - 15)x^2 + 119)x^2 + 28x + 50}{200}.
$
As the underlying computational domain is $\mathbb R^{2}$, we utilize a spectral method based on ``Sinc'' cardinal functions for which we briefly recall the presentation in \cite[Section 5.2]{Boy00}. Spatial approximations are assumed to be of the form
\begin{align*}
 f(x)\approx f_N(x) = \sum_{j=-N}^{N} f(x_j) C_j(x), \ \ C_j(x) = \frac{\mathrm{sin}(\pi (x-jh)/h)}{\pi(x-jh)/h}
\end{align*}
where the spectral collocation points $x_j=jh,j=-N,\dots,0,\dots,N$ are uniformly distributed. Note that the functions $C_j$ are also known as Whittaker's cardinal functions and satisfy $C_k(x_j)=\delta_{jk}$. In particular, let us emphasize the beneficial discrete structure which leads to exact (skew-)symmetric finite difference approximations of the first and second derivatives \cite[Appendix F.7]{Boy00}. On the downside, the resulting matrices are generically dense and thereby limit the maximal number of spatial degrees of freedom to a relatively small dimension. As a consequence, we report on results obtained on a two-dimensional grid $(-5,5)^2$ with $2N+1=161$ grid points of grid size $h=0.0625$ in each spatial direction, leading to a spatial approximation $A \in \mathbb R^{25921\times 25921}$ of the operator $\overline{\mathcal{L}^*}$. For the decoupling of the invariant measure, we evaluate the analytic expression $\mathrm{exp}(-H(x,v))$ on the computational grid and compute a projection on the $25920$-dimensional stable subspace according to the strategy described in \cite[Section 4.1]{BreKP17}.
Rather than taking a single input as in  \eqref{eq:langevincontrol}, we chose four control potential functions $\alpha_1,\alpha_2,\alpha_3,\alpha_4$ with $\alpha_i(x)=(\exp(-(x+(2i-5)))+1)^{-1}$. This multi-input configuration led to more robust stabilization and solution results for the underlying algebraic Riccati equations. The associated spatial derivatives which enter \eqref{eq:langevincontrol} are shown in Figure \ref{fig:control} (left). Clearly the theoretical results can be extended to this multi-input case.

For the guaranteed exponential decay rate, we choose $\delta = 0.2$. The associated (shifted) algebraic Riccati equation \eqref{eq:riccati} is approximately solved for $\Pi$ by a Kleinman-Newton iteration \cite{Kle68}. Since the discrete approximation of $\widehat{A}+0.2 I$ has two unstable eigenvalues, see Figure \ref{fig:spectrum} on the right, we compute a stabilizing initial approximation $\Pi_0$ by a spectral projection onto the subspace generated by the first two eigenvectors and a subsequent solution of the corresponding two-dimensional Riccati equation. The computation of iterates $\Pi_0,\Pi_1,\dots,\Pi_k$ is stopped once $\|\Pi_k-\Pi_{k-1}\|< 10^{-5}$. Each iterate $\Pi_k$ is obtained via the \matlab\;routine \texttt{lyap} which solves the Lyapunov equation associated with the current feedback system given by $A_k=A+\delta I -BB^\top \Pi_k$.

\subsection{Examples}\label{s6.2}

We present exponential stabilization results for the two initial configurations shown in Figure \ref{fig:initial_conf} (center/right). The first configuration corresponds to a smooth perturbation of the stationary distribution. In more detail, instead of the constant function $\mathbbm{1}(x,v)\equiv1$, denoting the coordinates of the desired stationary state $\mu(x,v)$ (in the weighted space), we defined an initial state of the form $y_0(x,v)=1+\tfrac{1}{2}\cos(2\pi x) \sin(\tfrac{1}{2}\pi v)$. The resulting initial configuration (in the unweighted state space) is presented in Figure \ref{fig:initial_conf} (left).

 The second configuration is obtained as weighted average of the stationary distribution and a 90 degree rotation thereof.

\begin{figure}[htbp]
  \begin{subfigure}[c]{0.32\textwidth}
    \includegraphics[scale=0.28]{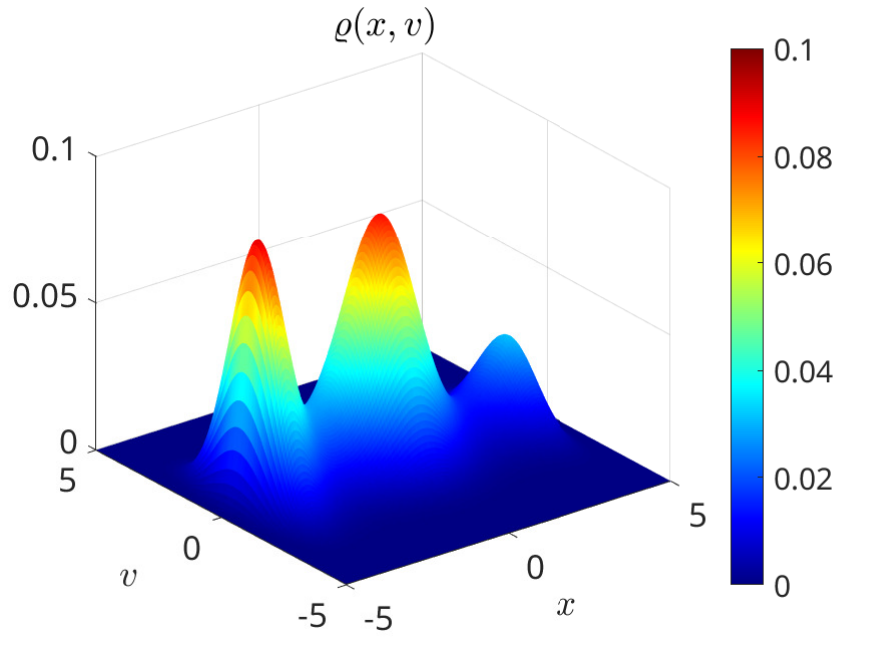}
    \end{subfigure}\ \begin{subfigure}[c]{0.32\textwidth}
    \includegraphics[scale=0.28]{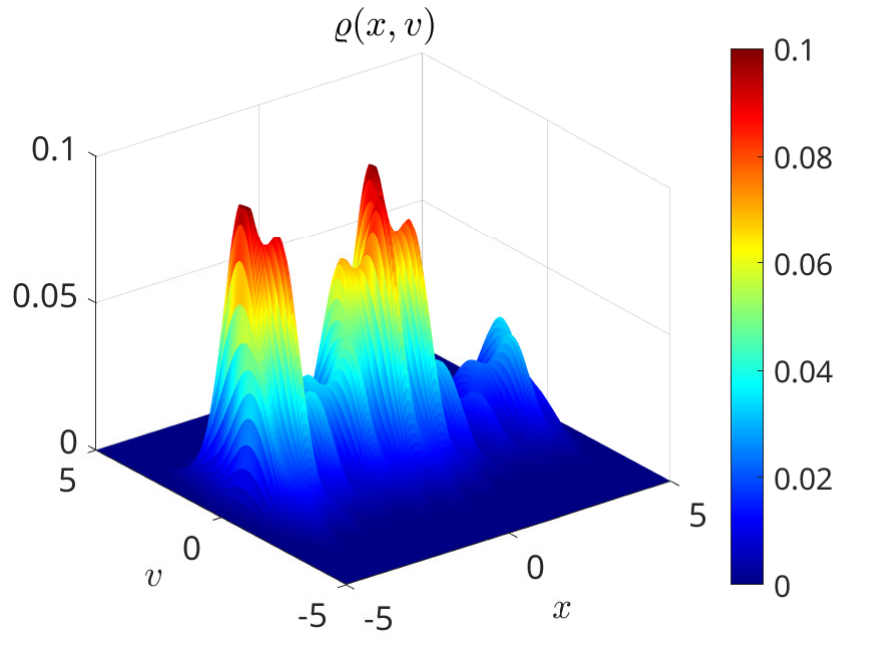}
    \end{subfigure}\ \begin{subfigure}[c]{0.32\textwidth}
    \includegraphics[scale=0.28]{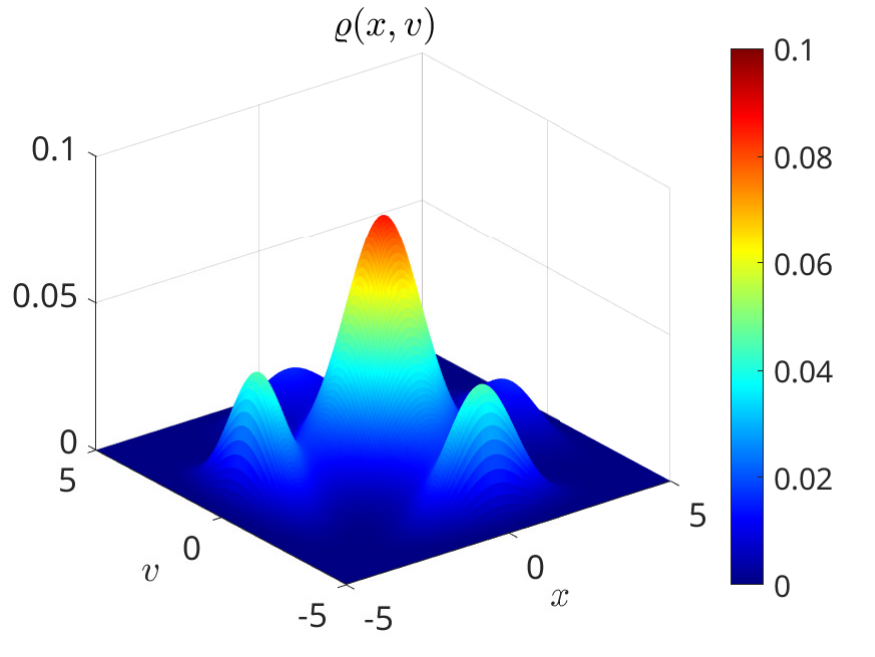}
    \end{subfigure}
    \caption{\emph{Left.} Stationary distribution for a confining triple well potential. \emph{Center/Right.} Two different initial configurations.}
  \label{fig:initial_conf}
    \end{figure}

In Figure \ref{fig:trajectories}, we show the exponential decay of the trajectories to the equilibrium state defined by $\varrho_\infty(x,v)=\mu(x,v)$. With regard to the initial configuration defining a  perturbation of $\mu$, see Figure \ref{fig:trajectories} (left), we observe almost identical behavior of uncontrolled and controlled dynamical states in the beginning of the time interval. At $t\approx 5$, the local effect of the linearized dynamics dominates and causes the expected improved exponential convergence rate. As is evident from Figure \ref{fig:control} (center), the influence of the feedback controls is small. Interestingly enough, these control laws still lead to a clear improvement in the speed of convergence to the equilibrium.

    \begin{figure}[htbp]
  \begin{subfigure}[c]{0.49\textwidth}
    \includegraphics[scale=0.43]{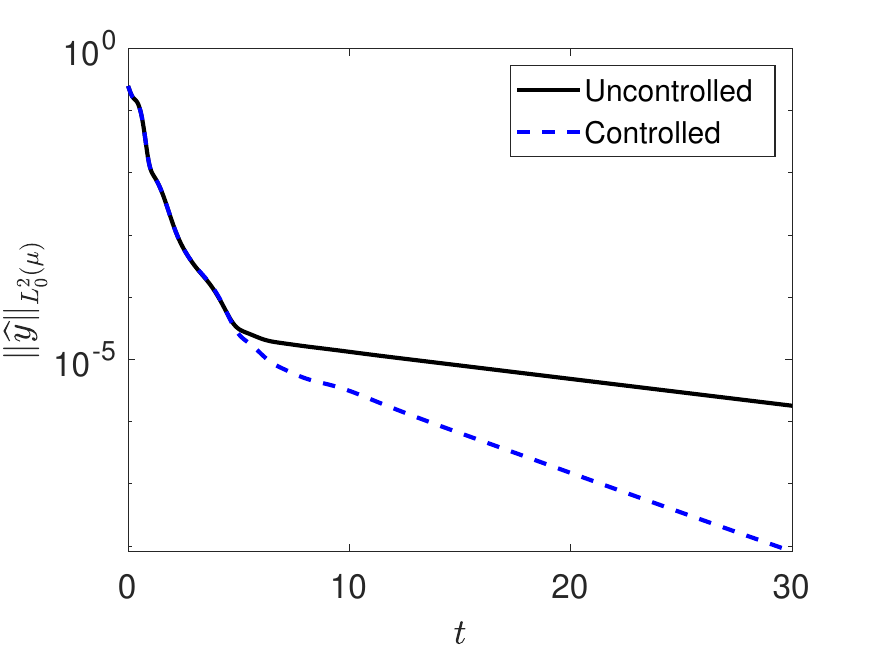}
    \end{subfigure}\ \begin{subfigure}[c]{0.49\textwidth}
    \includegraphics[scale=0.43]{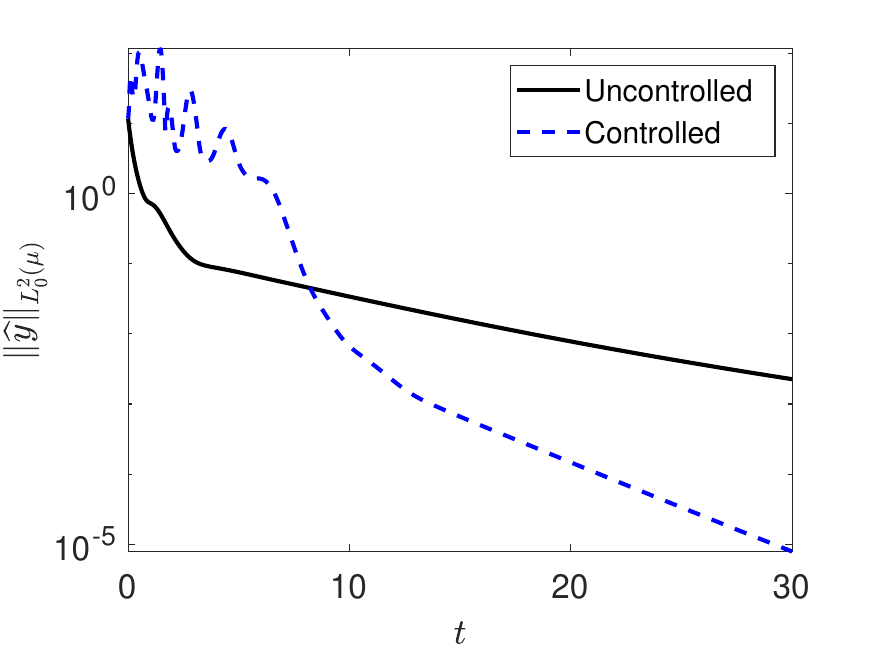}
    \end{subfigure}
    \caption{Exponential decay of the (un)controlled trajectories for perturbed (\emph{left}) and rotated (\emph{right}) initial configurations.}
  \label{fig:trajectories}
    \end{figure}

  For the initial configuration involving a rotation of the stationary distribution, Figure \ref{fig:trajectories} (right) shows that the nonlinear closed loop system requires some time to convergence into a suitable neighborhood of $\mu$ within which the prescribed exponential decay rate is obtained. Currently, we do not know whether the oscillatory behavior for $t\in [0,5]$ is an accurate approximation of the true dynamics or rather a numerical artifact caused by a not sufficiently refined discrete grid. In this regard, in Figure \ref{fig:spectrum} (left) we further provide the spectral properties of the uncontrolled and controlled (linearized) dynamics, i.e., the eigenvalues of $A$ and $\widehat{A}-BB^\top \Pi$, respectively. With regard to Remark \ref{rem:pH}, particularly note that the presence of a skew symmetric part $J$ renders the spectrum complex-valued. Moreover, the difference of both spectra mainly becomes noticeable around the unstable region of $A+\delta I$, see Figure \ref{fig:spectrum} (right). By construction, we observe that $\mathrm{Re} (\lambda_i(\widehat{A}-BB^\top \Pi))<\delta$ for all $i$.

    \begin{figure}[htbp]
  \begin{subfigure}[c]{0.475\textwidth}
    \includegraphics[scale=0.4]{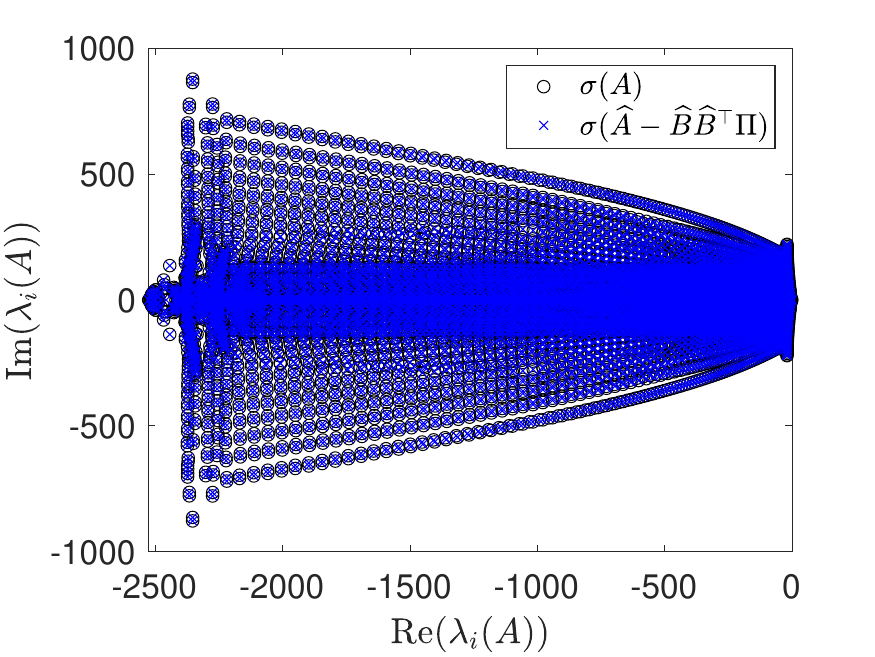}
    \end{subfigure}\ \begin{subfigure}[c]{0.475\textwidth}
    \includegraphics[scale=0.4]{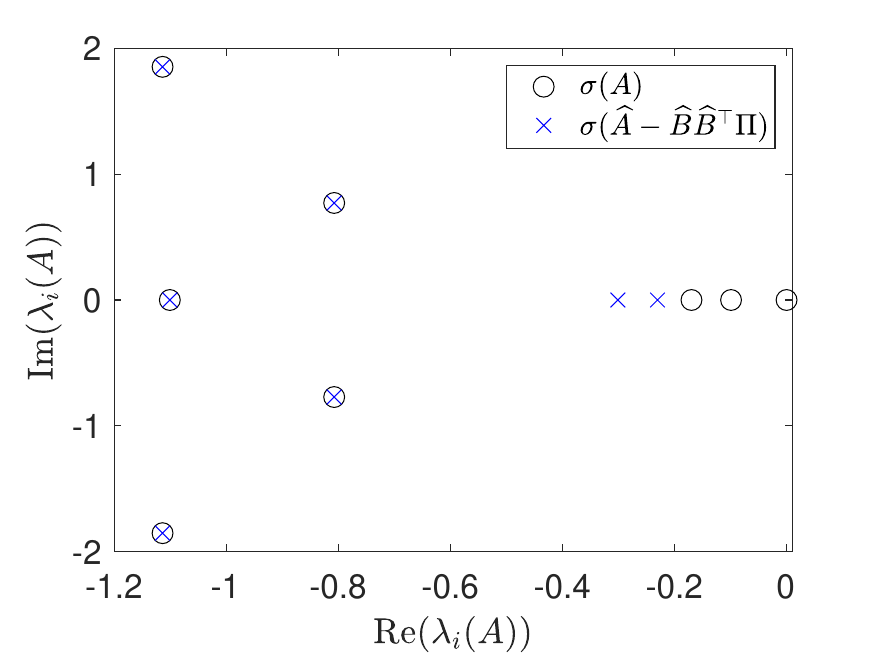}
    \end{subfigure}
    \caption{\emph{Left.} Entire spectrum of the (un)controlled system. \emph{Right}. Zoom around the $\delta$-unstable part.}
  \label{fig:spectrum}
    \end{figure}

  In comparison to the perturbed initial configuration, the influence of the feedback control laws is  significantly stronger, see Figure \ref{fig:control} (right). The oscillatory behavior of the controls may require additional numerical investigation.

    \begin{figure}[htbp]
  \begin{subfigure}[c]{0.32\textwidth}
    \includegraphics[scale=0.28]{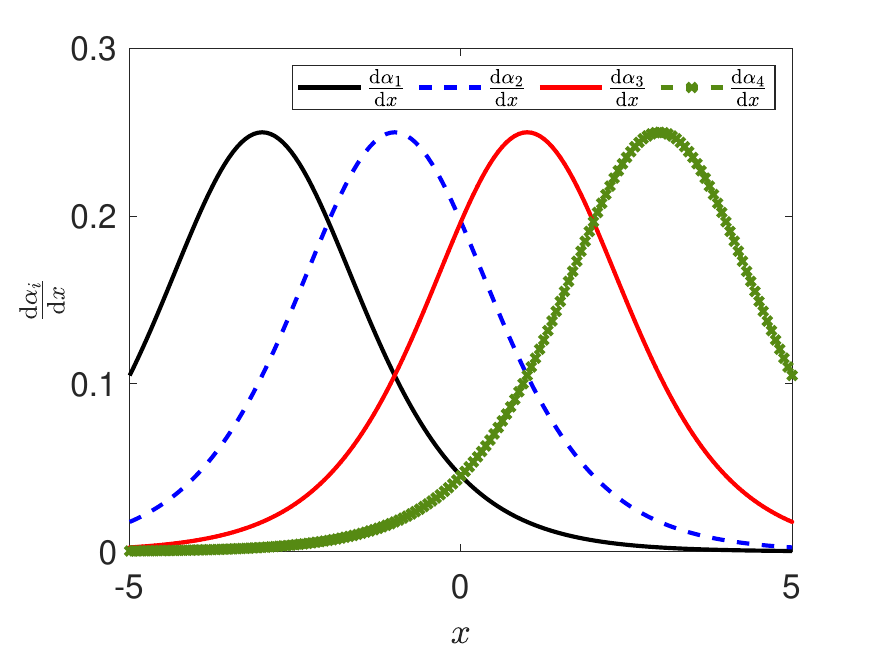}
    \end{subfigure}\ \begin{subfigure}[c]{0.32\textwidth}
    \includegraphics[scale=0.28]{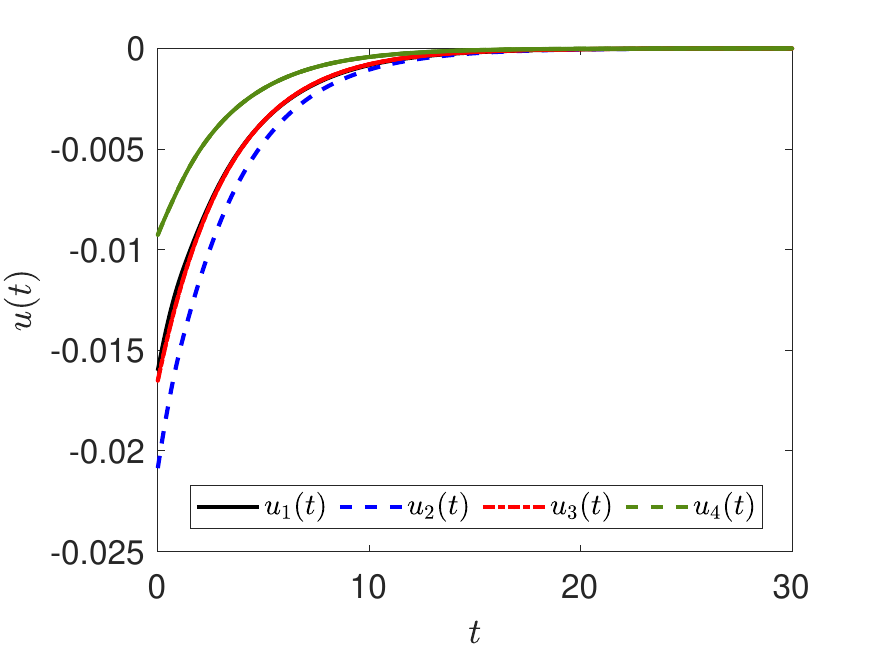}
    \end{subfigure}\ \begin{subfigure}[c]{0.32\textwidth}
    \includegraphics[scale=0.28]{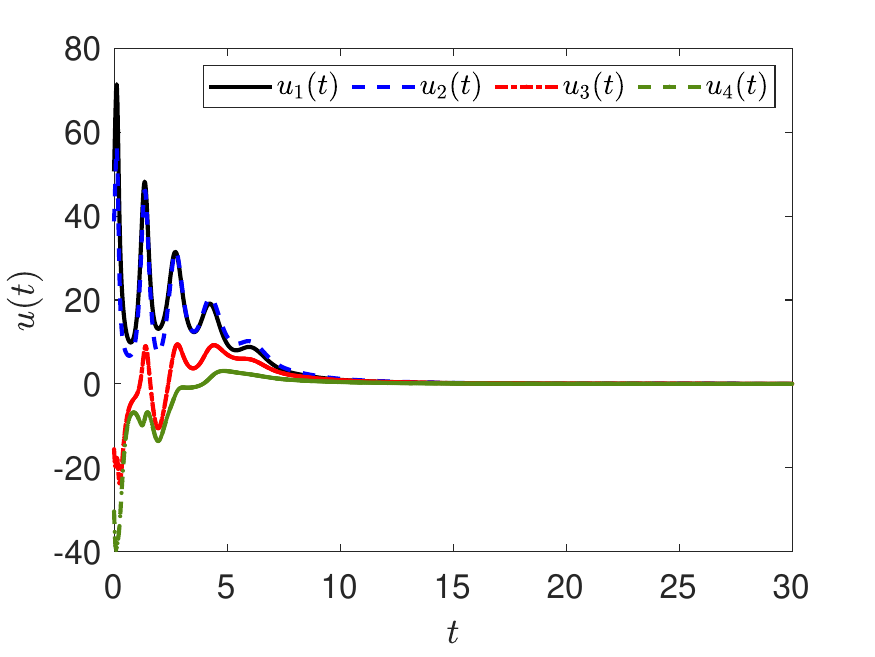}
    \end{subfigure}
    \caption{\emph{Left.} Gradients of the control shape functions $\alpha_i$. \emph{Center.} Feedback control laws for the perturbed initial configuration. \emph{Right.} Feedback control laws for the rotated initial configuration.}
  \label{fig:control}
    \end{figure}

\section*{Acknowledgement}

T.~Breiten acknowledges funding from the DFG
 via the MATH+ Cluster of Excellence, EXC 2046, in particular via the subproject EF4-6. We want to thank F.~Schwenninger (U Twente) for helpful discussions on admissibility and well-posed linear systems.

 \appendix

\section{Proofs}\label{appendix:proofs}


\begin{proof}[Proof of \cref{thm:local_mild_solution}]
 We use techniques from  \cite[Proposition 2.1]{BalMS04}. For $R>0$ let us define the set
 \begin{align*}
   \mathcal{F}= \left\{ y \in C([0,t_0];Y) \ | \ \| y-y_0 \|_{L^\infty(0,t_0;Y)} \le R \right\},
 \end{align*}
 where $t_0$ is to be determined below, and the mapping $\mathcal{T}_u \colon \mathcal{F}\to C([0,t_0];Y)$ by
\begin{align*}
  (\mathcal{T}_u y)(t) =e^{At} y_0 + \int _0^t e^{A(t-s)} Ny(s)u(s)\, \mathrm{d}s .
\end{align*}
We first  show that $\mathcal{T}_u$ maps $\mathcal{F}$ into itself. For all $0\le t\le t_0$ we estimate
\begin{align*}
\| (\mathcal{T}_u y)(t)- y_0 \|_Y \le
 \| e^{At}y_0 - y_0\| _Y + \| \Phi_t (yu) \|_Y
\end{align*}
where $\Phi_t$ is the controllability map defined in \eqref{eq:cont_map}.
Since $N$ is an (infinite-time) admissible control operator for $e^{At}$, \cite[Remark 4.6.2]{TucW09} yields the existence of a constant $M$ (independent of $t$) such that for $z(t)=\Phi_t (yu)$ we have
\begin{align*}
 \| z\|_{L^\infty(0,t_0;Y)} \le M \| yu\|_{L^2(0,t_0;Y)} \le M \|y\| _{L^\infty(0,t_0;Y)} \| u\| _{L^2(0,t_0)}.
\end{align*}
This implies for $0\le t\le t_0$:
\begin{align*}
 \| (\mathcal{T}_u y)(t)- y_0 \|_Y \le
 \| e^{At}y_0 - y_0\| _Y + M (R+\|y_0\|_Y) \| u\|_{L^2(0,t_0)}.
\end{align*}
Due to strong continuity of $e^{At}$ and the fact that $\| u\|_{L^2(0,t)}\to 0 $ for $t\to 0$, there exists $t_0$ such that
\begin{align*}
 \| (\mathcal{T}_u y)(t)- y_0 \|_Y \le  R \quad \text{ for all $0\le t\le t_0$}.
\end{align*}
Similarly, considering two solutions $y$ and $\tilde{y}$ with $y(0)=y_0=\tilde{y}(0)$, we find $t_0$ and $c\in (0,1)$ such that for all $0\le t\le t_0$:
\begin{align*}
 &\| (\mathcal{T}_u y)(t)- (\mathcal{T}_u \tilde{y})(t)\| _Y = \| \Phi_t (y-\tilde{y})u\| _Y \\& \qquad \le M \| u\|_{L^2(0,t_0)} \| y-\tilde{y}\| _{L^\infty(0,t_0;Y)}  \le c \| y-\tilde{y}\| _{L^\infty(0,t_0;Y).}
\end{align*}
Now, the assertion follows by a standard fixed point argument.
\end{proof}

\begin{proof}[Proof of \cref{thm:global_mild_solution}]
 Again, we follow the line of argument provided in \cite[Theorem 2.5]{BalMS04} for an analogue statement for bounded control operators. We also utilize ideas from the proof of \cite[Theorem 2.9]{HosJS18}. Thus, suppose that $y(\cdot)$ solves \eqref{eq:abs_bil_cau} defined for $t\in [0,a)$ with $a\le T$. Then for a constant $c$ and the constant $M$ from the proof of Theorem \ref{thm:local_mild_solution}, it holds that
 \begin{align*}
  \| y(t)\| _Y &\le c \| y_0\|_Y + \|  \Phi_t (yu)\| _Y \le c\|y_0\| _Y + M \|yu \| _{L^2(0,t;Y)} \\
  &= c \| y_0\| _Y + M \mathrm{sup} \left\{ \langle yu,g\rangle_{L^2(0,t;Y)} \ | \ g\in L^2(0,t;Y), \| g\|_{L^2(0,t;Y)} \le 1 \right\}.
 \end{align*}
Hence, for $\varepsilon > 0$ there exists $\tilde{g}\in L^2(0,t;Y), \|\tilde{g}\|_{L^2(0,t;Y)}\le 1,$ such that
\begin{align*}
 \| y(t)\| _Y &\le c \| y_0 \| _Y + M \langle yu , \tilde{g}\rangle_{L^2(0,t;Y)} + \varepsilon .
 \end{align*}
 In particular, for $\varepsilon=\max(\|y_0\|_Y,\|u\|_{L^2(0,T)})$ and $\tilde{c}\!=\!c\|y_0\|_Y+\max(\|y_0\|_Y,\|u\|_{L^2(0,T)})$:
 \begin{align*}
\| y(t)\| _Y &\le \tilde{c} + M \int_0^t \langle y(s), u(s)\tilde{g}(s)\rangle_Y \, \mathrm{d}s  \le \tilde{c} + M \int_0^t \| y(s) \|_Y \| u(s)\tilde{g}(s)\|_Y \, \mathrm{d}s.
\end{align*}
Since $u\tilde{g}\in L^1(0,t;Y)$, Gronwall's lemma implies that
\begin{align}\label{eq:linf_est}
 \| y(t)\| _Y \le \tilde{c} e^{\int_0^t M \| u(s)\tilde{g}(s)\|_Y\, \mathrm{d}s}\le \tilde{c} e^{M\| u\|_{L^2(0,t)}}.
\end{align}
Thus, the solution can be extended beyond $a$ in case $a<T$.
\end{proof}

 \bibliographystyle{siam}
 \bibliography{M149969_refs}
\end{document}